\documentclass[11pt]{article}
\usepackage{latexsym, amsfonts, amscd, amssymb, verbatim, amsmath,
enumerate}

\setlength{\textheight}{9.5in}%
\setlength{\textwidth}{6.3in}%
\setlength{\topmargin}{-0.5in}%
\setlength{\oddsidemargin}{0.15in}%

\newtheorem{theorem}{Theorem}[section]
\newtheorem{proposition}{Proposition}[section]
\newtheorem{definition}{Definition}[section]

\newtheorem{lemma}{Lemma}[section]
\newtheorem{remark}{Remark}[section]
\newtheorem{corollary}{Corollary}[section]

\newtheorem{example}{Example}[section]

\newcommand{\proofbox}{\hspace{\fill}{$\Box$}}
\newenvironment{proof}{\textbf{Proof}.}{\proofbox}

\def\bar{\overline}
\date{}

\numberwithin{equation}{section}

\begin{document}

\title{Locally Lipschitz stability of solutions to a parametric parabolic optimal control problem with mixed pointwise constraints}
	
\author{Huynh Khanh\footnote{Department of Mathematics, FPT University, Hoa Lac Hi-Tech Park, Hanoi, Vietnam; email: khanhh@fe.edu.vn}}

\maketitle
	
\medskip
	
\noindent {\bf Abstract.} {\small  A class of parametric optimal control problems governed by semilinear parabolic equations with mixed pointwise constraints is investigated. The perturbations  appear in the objective functional,  the state equation and in mixed pointwise constraints. By analyzing regularity and establishing stability condition of  Lagrange multipliers we prove that, if the strictly second-order sufficient condition for the unperturbed problem is valid, then the solutions of the problems as well as the associated Lagrange multipliers are locally Lipschitz continuous functions of parameters.}
	
\medskip


\noindent {\bf  Key words.}  Solution stability, Second-order sufficient optimality conditions, Parametric  parabolic control, Mixed pointwise constraints.
	
\noindent {\bf AMS Subject Classifications.} 49K15, 90C29
	
\vspace{0.3cm}

\section{Introduction}

Let $\Omega$ be an open and  bounded domain in $\mathbb{R}^N$ with $N=2, 3$ and its  boundary $\Gamma=\partial \Omega$ is of class $C^2$. Let $D=H^2(\Omega)\cap H_0^1(\Omega)$ and $H=L^2(\Omega)$. We consider the following parametric semilinear parabolic optimal control problem: for each parameter $w \in L^\infty(Q)$, find a control function $u\in L^\infty(Q)$  and the corresponding state $y\in L^\infty(Q)\cap W^{1, 1}_2(0, T; D, H)$ which solve
\begin{align}
    &J(y, u, w)=\int_Q L(x, t,  y(x, t), u(x, t), w (x, t)) dxdt\to\inf \label{P1}\\
    &{\rm s.t.}\notag\\
    &\frac{\partial y}{\partial t} +Ay + f(x, t, y, w) = u + w \quad \text{in}\quad Q:= \Omega  \times (0, T)\label{P2} \\
    & y(x, t)=0 \quad \text{on}\quad \Sigma=\Gamma\times [0, T], \quad y(x, 0)=y_0(x) \quad \text{on}\quad \Omega \label{P3}\\
    & g(x, t, y(x, t), u(x,t), w(x, t)) \leq 0 \quad \text{a.a.}\quad (x, t)\in Q, \label{P4}
\end{align} 
where $T > 0$ is given,  $y_0\in H^1_0(\Omega)\cap L^\infty(\Omega)$;  $A$ is  an  elliptic operator which is defined by setting
$$
Ay =  - \sum_{i,j = 1}^N D_j\left(a_{ij}(x) D_i y \right),
$$  $L:Q \times {\mathbb R} \times {\mathbb R} \times \mathbb R \to {\mathbb R}$,  $f: Q \times \mathbb R \times \mathbb R \to \mathbb R$ and $g: \Omega\times[0, T]\times \mathbb{R} \times \mathbb R \times \mathbb R \to \mathbb{R}$ are of class $C^2$.

Throughout the paper we denote by $(P(w))$  the problem \eqref{P1}-\eqref{P4} corresponding to parameter  $w \in L^\infty(Q)$, and  by $\Phi (w)$ the feasible set of  $(P(w))$,  that is, set of all couples $(y, u)$ in $(L^\infty(Q)\cap W^{1, 1}_2(0, T; D, H)) \times L^\infty(Q)$ such that (\ref{P2})-(\ref{P4}) are satisfied.  For a fixed parameter $\bar w \in L^\infty(Q)$, we call $(P(\bar w))$ the unperturbed problem. 


The stability analysis of optimal solutions to  optimization problems as well as optimal control problems depending on a parameter  have some important applications in parameter estimation problems (see for instance \cite{K.Ito-1992}) and in numerical methods of finding optimal solutions. In fact, the stability of solutions guarantees that approximate solutions converge to the original solutions of perturbed problems (see for instance \cite{Griesse-2008, Kuzmanovic-2013} and \cite{Kien-2022}).

In the literature on optimal control of partial differential equations, several contributions to the stability of solutions  with respect to perturbations have been  published. For this we refer the reader to  \cite{Alt-2010, Bonnans-2000, Griesse-2006, Griesse-2008, Hinze-2012, Kien-2017, Kien-2019, Kien-2021, Son-2023, Tuan-2023, Casas-2022-3, Corella-2023}  and the references given therein.   Among those papers, Casas and Tr\"{o}ltzsch \cite{Casas-2022-3}   recently considered  the optimal control problem with box constraint, the objective function is quadratic and convex, the state is solution of a semilinear parabolic Neumann equation. Based on different types of sufficient optimality conditions for a local solution of the unperturbed problem, they proved Lipschitz or H\"{o}lder stability with respect to perturbations of the initial data.  In \cite{Corella-2023}, Corella et.al. showed that H\"{o}lder or Lipschitz dependence of the optimal solution on perturbations for parabolic optimal control problems in which the equation and the objective functional are affine with respect to the control.    

 In the present paper  we consider the parametric optimal control problem (\ref{P1})-(\ref{P4}) with the objective functional and constraints in general setting. We shall focus on second-order sufficient optimality conditions for the problem $(P(\bar w))$ and locally Lipschitz stability of solutions as well as associated Lagrange multipliers.  
Namely, we prove that  under the assumption of a second-order sufficient optimality conditions, the optimal solutions and the associated Lagrange multipliers are locally Lipschitz continuous functions of the parameter. Also, we show that if the strict second-order optimality condition for $(P(\bar w))$ is satisfied at $\bar z = (\bar y, \bar u)$, then $\bar z$ is a locally strong solution of $(P(\bar w))$. Therefore, the second-order sufficient optimality conditions for $(P(\bar w))$ play an important role in the stability of solutions of parametric optimal control problems and so they should be studied parallel to the solution stability.

In order to prove the obtained results, we shall modify the method from \cite{Alt-1990} for problem (\ref{P1})-(\ref{P4}) which is governed by semilinear parabolic equation with  nonlinear and mixed pointwise constraints.  Note that  W. Alt  \cite{Alt-1990} studied  a class of  parametric optimal control problems governed by ordinary differential equations  with pure control constraints. Beside the modification of the method from \cite{Alt-1990}, the choice of function space for control variable is also important.  It is known that if control variable  $u \in L^p(Q)$ with $1 \le p < \infty$, then  the cost function $J$ as well as mappings $f$ and $g$ are hardly differentiable with respect to $u$ on spaces $L^p(Q)$.  By this reason, we choose the space $L^\infty(Q)$ for control variable. However, we have to pay some prices for  this choice: the Lagrange multipliers are additive measure rather than functions; the second-order sufficient optimality condition is not satisfied with respect to the maximum norm. To overcome these difficulties, we first show that under certain conditions, the linear functional on $L^\infty(Q)$ can be represented by a density in $L^1(Q)$. We then use the two-norm  method to deal with the second-order optimality conditions, which is given in  Section 6.

The paper is organized as follows.  
In section 2, we set up assumptions and state our main result in Theorem \ref{DinhLy}. 
Some results related to the state equation are presented in section 3. First-order optimality conditions as well as regularity of Lagrange multipliers for problems $(P(\bar w))$ and $(P(w))$ are established in section 4. Section 5 gives  a result on the  stability of Lagrange multipliers. The second-order sufficient conditions for $(P(\bar w))$ are provided in section 6. The proof of the main result is given in section 7.

\section{Assumptions and statement of the main result}

Let $X$ be  a Banach space with the dual $X^*$. Given  a vector $v \in X$ and a number $r > 0$, we denote by $B_X(v, r)$ and $\bar B_X(v, r)$ the open ball and the closed ball with center $v$ and radius $r$, respectively. In some cases, if no confusion, we can write $B(v, r)$ and $\bar B(v, r)$.

 Throughout this paper we assume that $\partial \Omega$ is of class $C^2$ and the initial datum
 \begin{align*}
     y_0\in H^1_0(\Omega)\cap L^\infty(\Omega).
 \end{align*}
 As mentioned in the introduction,   $H=L^2(\Omega)$, $V=H_0^1(\Omega)$ and $D= H^2(\Omega)\cap H_0^1(\Omega)$. The norm and the scalar product in $H$ are denoted by $|\cdot|$ and $(\cdot, \cdot)$,  respectively.  Note that $|\cdot|^2=(\cdot, \cdot)$. It is known that the embeddings 
$$
D\hookrightarrow V\hookrightarrow H
$$ are compact and each space is dense in the following one. Let $H^{-1}(\Omega)$ be the dual of $H_0^1(\Omega)$. We define the following function spaces
\begin{align*}
& H^1(Q)=W^{1,1}_2(Q)=\{y\in L^2(Q):  \frac{\partial y}{\partial x_i}, \frac{\partial y}{\partial t}\in L^2(Q)\},\\
&W(0, T)=\{y\in L^2(0, T; H^1_0(\Omega)): \frac{\partial y}{\partial t} \in L^2(0, T; H^{-1}(\Omega))\},\\
& W^{1,1}_2(0, T; V, H)=\{y\in L^2(0, T; V): \frac{\partial y}{\partial t}\in L^2(0, T; H) \},\\
& W^{1,1}_2(0, T; D, H)=\{y\in L^2(0, T; D): \frac{\partial y}{\partial t}\in L^2(0, T; H) \},
\end{align*} 
and space of parameters $W := L^\infty(Q)$, control space $U := L^\infty(Q)$; state space is considered as
\begin{align*}
    Y := \bigg\{ y \in W^{1,1}_2(0, T; D, H) \cap L^\infty(Q): \quad \frac{\partial y}{\partial t} + Ay \in L^p(0, T; H) \bigg\}.
\end{align*}
Here $Y$ is endowed with the graph norm
\begin{align*}
    \|y\|_Y:= \|y\|_{W^{1, 1}_2(0, T; D, H)} + \|y\|_{L^\infty(Q)} +  \bigg\|\frac{\partial y}{\partial t} + Ay\bigg\|_{L^p(0, T; H)}.
\end{align*}
and it's a Banach space with this norm. Besides, we have the following embedding: 
\begin{align}
\label{keyEmbed2} 
Y \hookrightarrow  W^{1,1}_2(0, T; D, H)\hookrightarrow  C(0, T; V)  \quad {\rm and} \quad  W(0, T) \hookrightarrow C(0, T; H),
\end{align} where $C(0, T; X)$ stands for the space of continuous mappings $v: [0, T]\to X$ with $X$ is a Banach space. In the sequel, we assume that 
\begin{align}
K_\infty = \Big\{u\in U: u(x, t)\leq 0\ {\rm a.a.}\ (x, t)\in Q\Big\}.  
\end{align}

Let us make the following assumptions which are related to the state equation.


\noindent $(\textbf{H1})$ Coefficients ${a_{ij}}=a_{ji} \in {L^\infty }\left( \Omega  \right)$ for every $1 \le i,j \le N$, satisfy the uniform ellipticity conditions, i.e.,  there exists a number $\alpha > 0$ such that 
\begin{align}
\alpha {\left| \xi  \right|^2} \le \sum\limits_{i,j = 1}^N {{a_{ij}}\left( x \right){\xi _i}{\xi _j}} \,\,\,\,\,\,\,\,\forall \xi  \in {{\mathbb R}^N}\,\,\,{\rm {for\,\,\,a.a.}}\,\,\,x \in \Omega .
\end{align}

\noindent $(\textbf{H2})$ The mapping $f: Q \times \mathbb R \times \mathbb R \to {\mathbb R}$ is a Carath\'{e}odory  function of class $C^2$ with respect to variable $(y, w)$;  $f(\cdot, \cdot, 0, \cdot) = 0$ and there exists a number $C_f\in\mathbb{R}$ such that $f_y(x, t, y, w)\geq C_f$. Furthermore,  for each $M>0$, there exists $k_{f,  M}>0$ such that 
\begin{align*}
    &|f(x, t, y, w)| +  |f_y(x, t, y, w)| + |f_{yy}(x, t, y, w)| \le k_{f, M} \\
    &{\rm and} \\
    &|f(x, t, y_1, w_1) - f(x, t, y_2, w_2)|  +  |f_y(x, t, y_1, w_1) - f_y(x, t, y_2, w_2)| \\
    &+ |f_w(x, t, y_1, w_1) - f_w(x, t, y_2, w_2)|  +  |f_{yy}(x, t, y_1, w_1) - f_{yy}(x, t, y_2, w_2)| \\
    &+ |f_{yw}(x, t, y_1, w_1) - f_{yw}(x, t, y_2, w_2)|  +  |f_{ww}(x, t, y_1, w_1) - f_{ww}(x, t, y_2, w_2)| \\
    &\le k_{f, M}\Big(|y_1 - y_2| + |w_1 - w_2| \Big),
\end{align*}
for a.a.  $(x, t) \in Q$, for all $y, w, y_i, w_i \in \mathbb R$ satisfying $|y|, |w|, |y_i|, |w_i| \le M$ with $i = 1, 2$.

\medskip 

Let us give some illustrative examples showing that $f$ satisfies $(H2)$.
\begin{example} {\rm 
    Let $\Omega := \{x = (x_1, x_2) \in \mathbb R^2 : x_1^2 + x_2^2 < 1\}$, $Q := \Omega \times (0, 7)$ and   
    $f: Q \times \mathbb R \times \mathbb R \to {\mathbb R}$, $f(x, t, y, w) = y(w^2 + t^4 + |x|^2)$. Then $f$ satisfies assumption $(H2)$. Indeed, function  $(x, t, y, w)  \mapsto f(x, t, y, w)$   is of class $C^\infty$; $f(x, t, 0, w) = 0$,  $f_y(x, t, y, w) = w^2 + t^4 + |x|^2 \ge 0$ for all $(x, t, w) \in Q \times \mathbb R$. Moreover, for all $(x, t) \in Q$ and $|y|, |w|, |y_i|,| w_i| \le M$, $i = 1, 2$, we have
    \begin{align*}
        &|f(x, t, y, w)| +  |f_y(x, t, y, w)| + |f_{yy}(x, t, y, w)| \\
        &\quad =  (|y| +  1)(w^2 + t^4 + |x|^2) \le (M + 1)(M^2 + 7^4 + 1), \\
        &|f(x, t, y_1, w_1) - f(x, t, y_2, w_2)| = |y_1(w_1^2 + t^4 + |x|^2) - y_2(w_2^2 + t^4 + |x|^2)| \\
        &\quad \le (t^4 + |x|^2)|y_1 - y_2| +  |y_1w_1^2  - y_1w_2^2 | + |y_1w_2^2  - y_2w_2^2 | \\
        &\quad \le (t^4 + |x|^2 + M^2)|y_1 - y_2| + M|w_1^2  - w_2^2 |  
        \le (t^4 + |x|^2 + M^2)|y_1 - y_2| + 2M^2|w_1  - w_2|  \\
        &\quad \le {\rm max}\Big\{7^4 + 1 + M^2, 2M^2\Big\}\Big(|y_1 - y_2| + |w_1 - w_2| \Big), \\
        &|f_y(x, t, y_1, w_1) - f_y(x, t, y_2, w_2)| = |(w_1^2 + t^4 + |x|^2) - (w_2^2 + t^4 + |x|^2)| \le 2M |w_1 - w_2|, \\
        &|f_w(x, t, y_1, w_1) - f_w(x, t, y_2, w_2)| = 2|y_1w_1 - y_2w_2| \le 2M\Big(|y_1 - y_2| + |w_1 - w_2| \Big), \\
        &|f_{yy}(x, t, y_1, w_1) - f_{yy}(x, t, y_2, w_2)| = 0, \quad  
        |f_{yw}(x, t, y_1, w_1) - f_{yw}(x, t, y_2, w_2)| = 2 |w_1 - w_2|, \\
        &|f_{ww}(x, t, y_1, w_1) - f_{ww}(x, t, y_2, w_2)| = 2 |y_1 - y_2|.
    \end{align*}
    Also, it is easy to see that some other examples of functions $f$ satisfying the above assumption are: $f(x, t, y, w) = y^3(w^2 + t^4 + |x|^2)$,  $f(x, t, y, w) = (y + y^3)(w^2 + t^2 + |x|^2)$,... 
    }
\end{example}

Let $\phi: \Omega\times[0, T]\times\mathbb{R}\times\mathbb{R}  \times \mathbb R   \to \mathbb{R}$ be a mapping which stands for $L$ and $g$. We impose the following hypotheses. 

\noindent $(\textbf{H3})$ $\phi$ is a Carath\'{e}odory  function and  for a.a.  $(x,t)\in \Omega\times [0, T]$, $\phi(x, t, \cdot, \cdot, \cdot)$ is of class $C^2$ and satisfies the following property:  for each $M>0$, there exists $k_{\phi,M}>0$ such that 
\begin{align*}
    &|\phi_y(x, t, y, u, w)|  + |\phi_u(x, t, y, u, w)| \\ 
    &+ |\phi_{yy}(x, t, y, u, w)| + |\phi_{yu}(x, t, y, u, w)| + |\phi_{uu}(x, t, y, u, w)| \le k_{\phi, M} \\
    &{\rm and} \\
    &|\phi(x, t, y_1, u_1, w_1) - \phi(x, t, y_2, u_2, w_2)| + |\phi_y(x, t, y_1, u_1, w_1) - \phi_y(x, t, y_2, u_2, w_2)|\\
    &\quad \quad + |\phi_u(x, t, y_1, u_1, w_1) - \phi_u(x, t, y_2, u_2, w_2)| + |\phi_w(x, t, y_1, u_1, w_1) - \phi_w(x, t, y_2, u_2, w_2)|\\
    &\quad \quad + |\phi_{yy}(x, t, y_1, u_1, w_1) - \phi_{yy}(x, t, y_2, u_2, w_2)| + |\phi_{yu}(x, t, y_1, u_1, w_1) - \phi_{yu}(x, t, y_2, u_2, w_2)|\\
    &\quad \quad + |\phi_{yw}(x, t, y_1, u_1, w_1) - \phi_{yw}(x, t, y_2, u_2, w_2)| + |\phi_{uu}(x, t, y_1, u_1, w_1) - \phi_{uu}(x, t, y_2, u_2, w_2)|\\
    &\quad \quad + |\phi_{uw}(x, t, y_1, u_1, w_1) - \phi_{uw}(x, t, y_2, u_2, w_2)| + |\phi_{ww}(x, t, y_1, u_1, w_1) - \phi_{ww}(x, t, y_2, u_2, w_2)|\\
    &\quad \quad \le k_{\phi,M}\Big(|y_1-y_2|+ |u_1-u_2| + |w_1- w_2| \Big),
\end{align*}
for a.a.  $(x, t)\in \Omega\times[0, T]$ and  for all $y, u, w, y_i, u_i, w_i \in \mathbb{R}$ satisfying $|y|, |u|, |w|, |y_i|, |u_i|, |w_i| \leq M$ with $i=1,2$. 

\medskip

We now fix $\bar w \in W$. Let $(\bar y, \bar u) \in \Phi(\bar w)$. 

\noindent $(\textbf{H4})$ There exists a constant $\gamma_0 > 0$ such that 
\begin{align}
    \Big|g_u(x, t, \bar y(x, t) , \bar u(x, t), \bar w(x, t))\Big|  \ge \gamma_0 \quad {\rm a.a.} \quad (x, t) \in Q.
\end{align}

 \medskip

   In the above assumptions,  $(H1)$ and $(H2)$ imply that for each parameter $\bar w \in W$ and each control $\bar u \in U$,  there exists a unique state $\bar y \in Y$, see Lemma \ref{Lemma-stateEq} below. Moreover, hypothesis $(H2)$ makes sure that $f$ is of class $C^2$ on $Y \times W$.  Hypothesis $(H3)$ makes sure that $J$ and $g$ are of class $C^2$ on $Y\times U \times W$. Meanwhile, $(H4)$ guarantees that the Robinson constraint qualification is satisfied and the  associated Lagrange multipliers belong to $L^\infty(Q)$.     

 Let us give an illustrative example showing that $g$ satisfies assumptions $(H3)$ and $(H4)$.  

\begin{example}{\rm 
The following formulae of $g$ satisfies assumptions $(H3)$ and  $(H4)$:
\begin{align*}
    & g(x, t, y, u, w) = u + w, \\
    & g(x, t, y, u, w) = y^3  +  u^3  + w^3 + u, \\
    & g(x, t, y, u, w) = \frac{1}{3}y^2u^3 - yu^2 + (2 + |t-1| + w^2)u, \\
    & g(x, t, y, u, w) = \pi(x, t) + w^4u^3 + (y^2 + 1)u
\end{align*}
where $\pi$ is a continuous function on $Q$. 
}
\end{example}

For each $w \in W$, recall that $\Phi(w)$ is the feasible set of the problem $(P(w))$, that is, set of all couples $(y, u) \in Y \times U$ such that (\ref{P2})-(\ref{P4}) are satisfied. Let $\bar z = (\bar y, \bar u) \in \Phi(\bar w)$. Then, symbols $f[x, t]$, $f_y[x, t]$, $\phi[x, t]$, $\phi_y[x, t]$, $\phi_u[\cdot, \cdot]$, etc., stand for  $f(x, t, \bar y(x, t), \bar w(x, t))$, $f_y(x, t, \bar y(x, t), \bar w(x, t))$, $\phi(x, t, \bar y(x, t), \bar u(x, t), \bar w(x, t))$, $\phi_y(x, t, \bar y(x, t), \bar u(x, t), \bar w(x, t))$, \\
$\phi_u(\cdot, \cdot, \bar y(\cdot, \cdot), \bar u(\cdot, \cdot), \bar w(\cdot, \cdot)$, etc.,  respectively.  Also, given a couple  $(\widehat y_w, \widehat u_w) \in \Phi(w)$, symbols  $f[x, t, w]$, $f_y[x, t, w]$, $\phi[x, t, w]$, $\phi_y[x, t, w]$, $\phi_u[\cdot, \cdot, w]$, etc., stand for  $f(x, t, \widehat y_w(x, t),  w(x, t))$, \\
$f_y(x, t, \widehat y_w(x, t),  w(x, t))$, 
$\phi(x, t, \widehat y_w(x, t), \widehat u_w(x, t),  w(x, t))$, $\phi_y(x, t, \widehat y_w(x, t), \widehat u_w(x, t),  w(x, t))$, \\ 
$\phi_u(\cdot, \cdot, \widehat y_w(\cdot, \cdot), \widehat u_w(\cdot, \cdot),  w(\cdot, \cdot)$, etc.,  respectively.

\begin{definition}
\label{df.00}
    The functions $\varphi_w \in W^{1, 1}_2(0, T; D, H) \cap L^\infty(Q)$ and $e_w \in  L^\infty(Q)$ are said to be Lagrange multipliers of the problem $(P(w))$ at $(\widehat y_w, \widehat u_w) \in \Phi(w)$ if they satisfy the following conditions:
    
\noindent $(i)$ (the adjoint equation) 
$$
-\dfrac{\partial \varphi_w}{\partial t} + A^* \varphi_w + f_y[\cdot, \cdot, w]\varphi_w = -L_y[\cdot, \cdot, w] - e_wg_y[\cdot, \cdot, w], \quad \varphi_w(., T) = 0, 
$$ where $A^*$ is the adjoint operator of $A$, which is defined by 
$$
A^*\varphi_w =  - \sum_{i,j = 1}^N D_i\left(a_{ij}\left( x \right) D_j \varphi_w \right);
$$ 
 
 \noindent $(ii)$ (the stationary condition in $\widehat u_w$) 
$$
L_u[x, t, w] - \varphi_w(x,t) + e_w(x,t)g_u[x, t, w] = 0 \ \ {\rm a.a.} \  (x, t) \in Q;
$$

\noindent $(iii)$ (the complementary condition) 
\begin{align}\label{ComlementCond}
e_w(x,t)g[x, t, w]=0  \quad   {\rm and} \quad   e_w(x, t)\geq 0\quad {\rm a.a.} \ (x,t)\in Q.
\end{align}    
\end{definition}
Denote $\Lambda_\infty [(\widehat y_w, \widehat u_w), w]$, $\Lambda_\infty [(\bar y, \bar u), \bar w]$ by  the sets of Lagrange multipliers of  $(P(w))$ at $(\widehat y_w, \widehat u_w)$ and of $(P(\bar w))$ at $(\bar y, \bar u)$, respectively. 

\begin{definition}
    The point $\bar z = (\bar y, \bar u) \in \Phi(\bar w)$, is said to be a locally strongly optimal solution of the problem $(P(\bar w))$ if there exist numbers $\epsilon>0$ and $\kappa > 0$ such that 
\begin{align}
\label{strong.solution}
    J(y, u, \bar w) \ge  J(\bar y, \bar u, \bar w) +\kappa \|u-\bar u\|_{L^2(Q)}^2\quad \forall (y, u)\in \Phi(\bar w) \cap[B_Y(\bar y, \epsilon)\times B_U(\bar u, \epsilon)], 
\end{align} where $B_Y(\bar y, \epsilon)$ and $B_U(\bar u, \epsilon)$ are  balls in $Y$ and $L^\infty(Q)$, respectively.
\end{definition}

We now denote  by $\mathcal{C}_2[\bar z, \bar w]$ the set of all couples $(y, u) \in W^{1, 1}_2(0, T; D, H) \times L^2(Q)$ satisfying the following condition:
\begin{align}
    \dfrac{\partial y}{\partial t} + Ay + f_y[x, t]y = u, \quad y(0) = 0.
\end{align}

The following theorem is the main result of the paper.

\begin{theorem}
\label{DinhLy}
    Let $\bar z = (\bar y, \bar u) \in \Phi(\bar w)$. Suppose that assumptions $(H1)$-$(H4)$ are valid and there exists a couple $(\varphi, e) \in \Lambda_\infty [\bar z, \bar w]$ such that the following strictly second-order condition:
    \begin{align}
     \label{StrictSOSCond.0}
    &\int_Q(L_{yy}[x, t]y^2 + 2L_{yu}[x, t]yu  + L_{uu}[x, t]u^2)dxdt \nonumber \\
    &+ \int_Q e(x, t)\Big( g_{yy}[x, t]y^2 + 2g_{yu}[x, t]yu  + g_{uu}[x, t]u^2 \Big) dxdt  + \int_Q \varphi f_{yy}[x, t] y^2 dxdt > 0,
\end{align}
is satisfied for all $(y, u)\in\mathcal{C}_2[(\bar y, \bar u), \bar w] \setminus \{(0,0)\}$. Furthermore, there exists a number $\varrho > 0$ such that 
\begin{align}
    \label{StrictSOSCond.01}
    L_{uu}[x, t] + e(x, t)g_{uu}[x, t] \ge \varrho \quad {\rm a.e.} \ (x, t) \in Q.
\end{align}
Then $(\bar y, \bar u)$  is a locally strongly optimal solution of the problem $(P(\bar w))$ in the sense of (\ref{strong.solution}).  
Moreover, there exist positive numbers $r_*$  and $s_*$ such that the following assertion is fulfilled:  if $z_w = (\widehat y_w, \widehat u_w)$ is a locally optimal solution of the problem $(P(w))$ with $(\widehat u_w, w) \in B_U(\bar u, r_*) \times B_W(\bar w, s_*)$,  and  $(\varphi_w, e_w) \in \Lambda_\infty [(\widehat y_w, \widehat u_w), w]$  then one has
\begin{align}
    &\|\widehat y_w  - \bar y \|_{W^{1, 1}_2(0, T; D, H)}  + \|\widehat u_w - \bar u\|_{L^2(Q)} \le K_{ lips}. \|w - \bar w\|_{L^\infty(Q)},    \label{KQC} \\
    &\|\varphi_w  -  \varphi\|_{L^2(Q)}   +  \|e_w - e\|_{L^2(Q)}  \le k_{ lips}.\|w - \bar w\|_{L^\infty(Q)},  \label{KQC.1}
\end{align}
for some positive constants $K_{ lips}, k_{lips} > 0$ which are independent of $w$.

\end{theorem}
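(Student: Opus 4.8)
The plan is to establish the two assertions in turn---first the strong local optimality \eqref{strong.solution} for $(P(\bar w))$, then the Lipschitz estimates \eqref{KQC}--\eqref{KQC.1}---reducing both to a single coercivity estimate drawn from \eqref{StrictSOSCond.0}--\eqref{StrictSOSCond.01}. The first step is to upgrade the pointwise strict inequality on the critical cone, together with the uniform positivity \eqref{StrictSOSCond.01}, to a uniform coercivity $\mathcal{Q}[\bar z,\bar w](y,u)\ge\delta\|u\|_{L^2(Q)}^2$ for all $(y,u)\in\mathcal{C}_2[\bar z,\bar w]$, where $\mathcal{Q}$ denotes the quadratic form appearing in \eqref{StrictSOSCond.0} and $\delta>0$. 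I would argue by contradiction: taking $(y_n,u_n)$ in the cone with $\|u_n\|_{L^2(Q)}=1$ and $\mathcal{Q}(y_n,u_n)\to\inf\le 0$, the compactness of the map $u\mapsto y$ for the linearized state equation (via the embeddings \eqref{keyEmbed2} and the state estimates of Section 3) yields $y_n\to y$ strongly and $u_n\rightharpoonup u$ weakly. If $u=0$, the coercive part $L_{uu}+e\,g_{uu}\ge\varrho$ forces $\liminf\mathcal{Q}(y_n,u_n)\ge\varrho>0$; if $u\neq 0$, then $(y,u)$ lies in the weakly closed cone and weak lower semicontinuity gives $\liminf\mathcal{Q}(y_n,u_n)\ge\mathcal{Q}(y,u)>0$ by \eqref{StrictSOSCond.0}. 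Either alternative contradicts $\mathcal{Q}\le 0$.

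Granting this coercivity, the strong optimality \eqref{strong.solution} follows from a second-order Taylor expansion of the Lagrangian associated with $(\varphi,e)$ about $\bar z$: the first-order terms vanish by the adjoint equation, the stationarity condition, and the complementarity \eqref{ComlementCond} of Definition \ref{df.00}, so that $J(y,u,\bar w)-J(\bar y,\bar u,\bar w)$ equals, up to a remainder, one half of $\mathcal{Q}$ evaluated at the feasible increment $(y-\bar y,u-\bar u)$. The coercivity then produces the quadratic growth with $\kappa\sim\delta/2$ once the remainder is absorbed. Here the two-norm framework of Section 6 is essential: on the neighbourhood in \eqref{strong.solution} the increment is small in $Y\times L^\infty(Q)$, so the quadratic remainder is $o(\|u-\bar u\|_{L^2(Q)}^2)$, while the coercive lower bound is measured in the weaker $L^2$-norm.

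For the Lipschitz estimates I would compare the first-order systems of $(P(\bar w))$ at $\bar z$ and of $(P(w))$ at $z_w$. Writing $\delta u=\widehat u_w-\bar u$, $\delta y=\widehat y_w-\bar y$, $\delta w=w-\bar w$, hypotheses $(H2)$--$(H3)$ ensure that the state, adjoint, and stationarity relations differ only by terms Lipschitz in $\delta w$ with $L^\infty$-data, so the perturbation enters as a forcing of size $O(\|\delta w\|_{L^\infty(Q)})$. The core mechanism is again the quadratic growth: using $(H4)$ and the Section 3 state estimates I would construct, from the optimal $z_w$, an auxiliary control feasible for $(P(\bar w))$ at distance $O(\|\delta w\|_{L^\infty(Q)})$---the adjustment being solvable pointwise because $|g_u|\ge\gamma_0$---and symmetrically a point feasible for $(P(w))$ near $\bar z$. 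Combining the growth inequality at $\bar z$ with the optimality of $z_w$ and expanding the resulting cost differences, where the first-order terms are controlled through the stationarity conditions and the parameter-Lipschitz bounds of $(H2)$--$(H3)$, yields $\kappa\|\delta u\|_{L^2(Q)}^2\le C\|\delta w\|_{L^\infty(Q)}\big(\|\delta u\|_{L^2(Q)}+\|\delta w\|_{L^\infty(Q)}\big)$, whence \eqref{KQC} after absorbing the linear term and deducing the $W^{1,1}_2(0,T;D,H)$-estimate for $\delta y$ from the state equation. Finally \eqref{KQC.1} follows from the stability of Lagrange multipliers established in Section 5: $\delta\varphi$ solves a linearized adjoint equation whose right-hand side is controlled by $\|\delta y\|+\|\delta u\|_{L^2(Q)}+\|\delta w\|_{L^\infty(Q)}$, and $\delta e$ is then recovered from the stationarity condition using $|g_u|\ge\gamma_0$.

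The main obstacle, as I see it, is the simultaneous presence of the two-norm gap and the mixed inequality constraint without any strict complementarity assumption. Because the second-order information lives in $L^2$ while the controls and the multipliers $\varphi_w,e_w$ live only in $L^\infty$, every remainder must be shown to be $o(\|\delta u\|_{L^2(Q)}^2+\|\delta w\|_{L^\infty(Q)}^2)$ in the correct norm; and the feasibility restoration for $g\le 0$ must be carried out pointwise, splitting $Q$ into near-active and inactive subsets, with only the bound $|g_u|\ge\gamma_0$ available in place of strict complementarity. Keeping all of these estimates uniform in $w$, so that the constants $K_{lips},k_{lips}$ in \eqref{KQC}--\eqref{KQC.1} are genuinely independent of $w$, is the delicate part.
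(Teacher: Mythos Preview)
Your treatment of the coercivity step---the contradiction argument with weak compactness of the linearized control-to-state map, splitting into the cases where the weak limit vanishes (handled by $L_{uu}+e\,g_{uu}\ge\varrho$) or does not (handled by \eqref{StrictSOSCond.0})---and the derivation of \eqref{strong.solution} by Taylor expansion of the Lagrangian are essentially the arguments of Lemma~\ref{6.01}, Corollary~\ref{6.02.1} and Proposition~\ref{6.03}.

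For the Lipschitz estimates, though, your route and the paper's diverge. You propose to restore feasibility---construct a point feasible for $(P(\bar w))$ near $z_w$ and one feasible for $(P(w))$ near $\bar z$---and then combine the quadratic growth of $J$ at $\bar z$ with the optimality of $z_w$. The paper instead follows Alt's method \cite{Alt-1990}: it introduces only the auxiliary state $\widetilde y\in S_F^{\widehat u_w,\bar w}$, which satisfies the state equation for $\bar w$ but need \emph{not} satisfy the inequality constraint for either problem, and compares Lagrangians with \emph{swapped} multipliers through the trivial inequality $\mathcal L(\bar z,\varphi,e,\bar w)\ge\mathcal L(\bar z,\varphi_w,e_w,\bar w)$ (Lemma~\ref{cmm1}). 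Two Taylor expansions of $\mathcal L$---one about $\bar z$ with $(\varphi,e)$, one about $(\widetilde y,\widehat u_w)$ with $(\varphi_w,e_w)$---plus coercivity give $\alpha_2\|\delta u\|_{L^2}^2\le A_1+A_2$, where $A_1=\int_Q(e-e_w)\,g(\cdot,\widetilde y,\widehat u_w,\bar w)$ and $A_2$ is a gradient-difference term. The multiplier stability of Lemma~\ref{5.0} is then used \emph{inside} this estimate (to bound $\|e-e_w\|_{L^2}$ by $\|\delta u\|_{L^2}+\|\delta w\|_{L^\infty}$ and to keep $(\varphi_w,e_w)$ in the $L^\infty$-neighbourhood where Corollary~\ref{6.02.1} applies), not merely at the end as you suggest.

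Your approach can be made to work, but the claim that feasibility restoration is ``solvable pointwise because $|g_u|\ge\gamma_0$'' skips the main difficulty: adjusting $u$ to enforce $g(\cdot,y(u,\bar w),u,\bar w)\le0$ changes the state $y(u,\bar w)$ through the PDE, so the constraint is nonlocal in $u$ and cannot be fixed point by point. What $(H4)$ actually delivers is the Robinson constraint qualification, hence metric regularity of the feasible-set map; that would furnish the auxiliary feasible points you need, but it is a heavier argument than a pointwise implicit-function step. The paper's Lagrangian-swap method bypasses this issue entirely---no feasible auxiliary points are ever constructed---at the price of needing the a priori multiplier bound of Section~5 as an input to the main inequality rather than as a corollary of it.
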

\begin{remark} {\rm   The Lagrange function associated with the problem (\ref{P1})-(\ref{P4}) is given by 
\begin{align}
    &\mathcal{L}: Y \times U \times L^q(0, T; H) \times \big( L^\infty(\Omega) \cap H_0^1(\Omega) \big)^* \times (L^\infty(Q))^*   \times W \to  \mathbb R, \nonumber \\
    &\mathcal{L}(y, u, \phi_1, \phi_2^*, e^*, w) = \int_QL(x, t, y, u, w)dxdt + \int_Q\bigg[\phi_1 (\frac{\partial y}{\partial t} + Ay + f(x, t, y, w) -u - w) \bigg]dxdt \nonumber \\
    &\quad  \quad + \left\langle \phi_2^*, y(0) - y_0 \right\rangle_{\big( L^\infty(\Omega) \cap H_0^1(\Omega) \big)^*, ( L^\infty(\Omega) \cap H_0^1(\Omega) } + \left\langle e^*, g(\cdot, \cdot, y, u, w)\right\rangle_{L^\infty(Q)^*, L^\infty(Q)},  \label{Lagrange.Function}
\end{align}
where $\frac{1}{p} + \frac{1}{q} = 1$. The fact that  Lagrange multipliers are elements of  $L^q(0, T; H) \times \big( L^\infty(\Omega) \cap H_0^1(\Omega) \big)^* \times (L^\infty(Q))^*$.  In Proposition \ref{4.01} and \ref{4.03} below, we show that the set of all Lagrange multipliers is a subset of $\Big(W^{1, 1}_2(0, T; D, H) \cap L^\infty(Q)\Big) \times L^\infty(Q)$. Then (\ref{StrictSOSCond.0}) becomes $$D^2_{(y,u)}\mathcal{L}(\bar y, \bar u, \varphi, e, \bar w)[(y, v), (y, v)] > 0, \quad \quad \forall (y, v)\in\mathcal{C}_2[(\bar y, \bar u), \bar w]\setminus\{(0,0)\}.$$    The condition (\ref{StrictSOSCond.01}) guarantees that the quadratic form  $D^2_{(y,u)}\mathcal{L}(\bar y, \bar u, \varphi, e, \bar w)[(y, v), (y, v)]$ is sequentially weakly lower semi-continuous in $v$, namely,  the  functional 
\begin{align*}
\zeta \mapsto  \int_Q \big(L_{uu}[x,t] + eg_{uu}[x, t]\big) \zeta^2dxdt
\end{align*} is convex and so it is sequentially lower semi-continous. 
}    
\end{remark}

\section{Analysis of the state equation}

Given  $y_0\in H$, $w \in W$ and $u\in L^2(0, T; H)$,  a function $y\in W(0, T)$ is said to be a {\it weak solution} of the semilinear parabolic equation \eqref{P2}-\eqref{P3} if 
$$
\langle \frac{\partial y}{\partial t}, v\rangle + \sum_{i,j=1}^n\int_\Omega a_{ij}D_iy D_j v dx +(f(x, t, y, w), v) =(u  + w, v), \quad \forall v\in H_0^1(\Omega)
$$ and a.a. $t\in [0, T]$, and $y(0)=y_0$. If a  weak solution $y$ such that $y\in W^{1, 1}_2(0, T; D, H)$ and $f(x, t, y, w)\in L^2(0, T; H)$ then we have $\frac{\partial y}{\partial t} +Ay +f(x, t, y, w)\in L^2(0, T; H)$ and
$$
\langle \frac{\partial y}{\partial t}, v\rangle + (Ay, v) +(f(x, t, y, w), v) =(u  + w, v), \quad \forall v\in H_0^1(\Omega).
$$ Since $H_0^1(\Omega)$ is dense in $H=L^2(\Omega)$, we obtain  
$$
( \frac{\partial y}{\partial t}, v) + (Ay, v) +(f(x, t, y, w), v) =(u  + w, v), \quad \forall v\in H.
$$  Hence
$$
\frac{\partial y}{\partial t} + Ay + f(x, t, y, w) =u  + w \quad \text{a.a.}\ t\in [0, T],\quad  y(0)=y_0.
$$ In this case we say $y$ is a {\it strong solution} of \eqref{P2}-\eqref{P3}. From now on a  solution to \eqref{P2}-\eqref{P3} is understood a strong solution. The existence of strong solutions is provided in the following lemma.


\begin{lemma}
\label{Lemma-stateEq}
Suppose that $(H1)$ and $(H2)$ are satisfied and $y_0\in L^\infty(\Omega)\cap H_0^1(\Omega)$. Then for each $w \in L^\infty(Q)$,  $u\in L^p(0, T; H)$ with $p>\frac{4}{4-N}$, the state equation \eqref{P2}-\eqref{P3} has a unique  solution   $y\in Y$ and there exist positive constants $C_1>0$ and $C_2>0$ such that 
\begin{align}
    & \|y\|_{L^\infty(Q)}\leq C_1 \Big(\|u\|_{L^p(0, T; H)} + \|w\|_{L^\infty(Q)} + \|y_0\|_{L^\infty(\Omega)}\Big), \label{KeyInq0}  \\
    & \Big\| \frac{\partial y}{\partial t}\Big\|_{L^2(0, T; H)} +\|y\|_{L^2(0, T; D)} \le  C_2 \Big(\|u\|_{L^p(0, T; H)} + \|w\|_{L^\infty(Q)} +\|y_0\|_{L^\infty(\Omega)} +\|y_0\|_V\Big).  \label{KeyInq1}
\end{align}  
\end{lemma}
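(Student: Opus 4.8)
The plan is to establish uniqueness first, and then existence through a fixed-point argument whose two governing a priori bounds are exactly the conclusions \eqref{KeyInq0} and \eqref{KeyInq1}. The structural features of $(H2)$ that drive everything are: $f(\cdot,\cdot,0,\cdot)=0$ together with the one-sided bound $f_y\ge C_f$, which by the mean value theorem yield the sign condition $f(x,t,y,w)\,y\ge C_f\,y^2$ and the global monotonicity of $y\mapsto f(x,t,y,w)-C_f y$; meanwhile the local constants $k_{f,M}$ provide Lipschitz control of $f$ once an $L^\infty$ bound on $y$ is in hand. I would order the work so that the $L^\infty$ estimate comes before the energy estimate, since the latter uses the former to bound $f(\cdot,\cdot,y,w)$.

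\textbf{Uniqueness.} Suppose $y_1,y_2\in Y$ are two solutions and set $z=y_1-y_2$, so that $z(0)=0$ and
\[
\frac{\partial z}{\partial t}+Az+\big(f(\cdot,\cdot,y_1,w)-f(\cdot,\cdot,y_2,w)\big)=0 .
\]
Testing with $z$, bounding the elliptic form below by $\alpha|\nabla z|^2\ge0$ via $(H1)$, and writing $f(\cdot,\cdot,y_1,w)-f(\cdot,\cdot,y_2,w)=f_y(\cdot,\cdot,\theta,w)\,z$ with $f_y\ge C_f$, I obtain $\tfrac12\tfrac{d}{dt}|z(t)|^2\le -C_f\,|z(t)|^2$; Gronwall's inequality (valid for either sign of $C_f$, since $z(0)=0$) forces $z\equiv0$.

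\textbf{A priori $L^\infty$ bound, estimate \eqref{KeyInq0} (the main obstacle).} This is the crux, and the place where the hypothesis $p>\tfrac{4}{4-N}$ is decisive. I would rewrite the equation as $\partial_t y+Ay+C_f y=g$ with $g=u+w-\big(f(\cdot,\cdot,y,w)-C_f y\big)$, and run the Stampacchia/De Giorgi truncation method: for a level $\kappa\ge \|y_0\|_{L^\infty(\Omega)}$ I test with $(y-\kappa)^+$ and, symmetrically, with $-(y+\kappa)^-$. Since $f(\cdot,\cdot,y,w)-C_f y$ is monotone and vanishes at $y=0$, the sign condition $f(x,t,y,w)\,y\ge C_f y^2$ makes the nonlinear contribution have the favorable sign on the super-level sets, so it is discarded; what remains is a linear parabolic inequality for $(y-\kappa)^+$ with right-hand side controlled by $\|u+w\|$ over the level set. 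Combining the resulting energy inequality with the embedding \eqref{keyEmbed2} and Gagliardo--Nirenberg interpolation in space, I derive a recursive inequality for the mass of the super-level sets across an increasing sequence of levels; this recursion closes precisely when $p>\tfrac{4}{4-N}$, the borderline exponent for $L^p(0,T;H)$ data to produce bounded solutions in dimension $N$. The outcome is \eqref{KeyInq0} with $C_1$ depending only on $\Omega,T,N,\alpha,C_f,p$. I expect the bookkeeping of this iteration — in particular confirming that the convergence threshold is exactly $\tfrac{4}{4-N}$ — to be the main technical difficulty; an alternative is to cite a known $L^\infty$-regularity result for linear parabolic equations with $L^p(0,T;L^2)$ right-hand side and transfer it to the nonlinear equation via the sign of $f$.

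\textbf{Energy estimate \eqref{KeyInq1} and existence.} With the $L^\infty$ bound available, set $M$ equal to the right-hand side of \eqref{KeyInq0}; then $|f(\cdot,\cdot,y,w)|\le k_{f,M}|y|$ by $(H2)$ and $f(\cdot,\cdot,0,\cdot)=0$, so $f(\cdot,\cdot,y,w)\in L^2(0,T;H)$ with norm controlled by $\|y\|_{L^\infty(Q)}$. Testing with $Ay$, using elliptic regularity ($\Gamma\in C^2$, $(H1)$) to bound $\|y\|_{L^2(0,T;D)}$ by $\|Ay\|_{L^2(0,T;H)}$ and the identity $(\partial_t y,Ay)=\tfrac12\tfrac{d}{dt}a(y,y)$ to absorb the initial data through $\|y_0\|_V$, I obtain the $\|y\|_{L^2(0,T;D)}$ bound, and then $\partial_t y=u+w-f(\cdot,\cdot,y,w)-Ay$ yields the $\partial_t y$ bound, giving \eqref{KeyInq1}. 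For existence I would define $\Psi$ sending $v$ in the ball $\{v\in C(0,T;H):\|v\|_{L^\infty(Q)}\le M\}$ to the solution $y$ of the \emph{linear} problem $\partial_t y+Ay=u+w-f(\cdot,\cdot,v,w)$, $y(0)=y_0$. The datum $f(\cdot,\cdot,v,w)$ is bounded, so the linear theory (maximal parabolic regularity together with the $L^\infty$ bound above) shows $\Psi$ maps this ball into itself, while Aubin--Lions compactness of $W^{1,1}_2(0,T;D,H)$ into $C(0,T;H)$ makes $\Psi$ compact and continuous. Schauder's fixed-point theorem then produces a fixed point $y=\Psi(y)\in Y$, which is the desired solution; uniqueness was established above, and both estimates hold by construction.
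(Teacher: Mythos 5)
Your uniqueness argument and your energy estimate are sound, and the latter in fact mirrors the paper: the paper derives \eqref{KeyInq1} by feeding $\eta_1:=u+w-f(\cdot,\cdot,y,w)\in L^2(0,T;H)$ (bounded via \eqref{KeyInq0} and $(H2)$) into the improved-regularity theorem \cite[Theorem 5, p.~360]{Evan}, and your test-with-$Ay$ computation is precisely the proof of that cited theorem. For existence, uniqueness and the $L^\infty$ bound \eqref{KeyInq0}, however, the paper does not run De Giorgi iteration or a fixed-point scheme at all: it simply cites \cite[Theorem 2.1]{Casas-2020} and \cite[Theorem 2.1]{Casas-2022-1}, which cover exactly this semilinear monotone setting with data in $L^p(0,T;H)$, $p>\frac{4}{4-N}$ — this is the fallback you yourself mention for the $L^\infty$ estimate, but note you invoke it only for the bound, not for existence.

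That is where the genuine gap sits: the self-map step of your Schauder scheme fails as stated. Your a priori bound with constant $C_1$ is proved for solutions of the \emph{semilinear} equation, because on the super-level set $\{y>\kappa\}$ the term $f(\cdot,\cdot,y,w)-C_f y$, evaluated at the solution itself, pairs with $(y-\kappa)^+$ with a favorable sign and is discarded. Once you freeze the nonlinearity at $v$ and solve the linear problem $\partial_t y + Ay = u+w-f(\cdot,\cdot,v,w)$, that structure is gone: $f(\cdot,\cdot,v,w)$ is merely a datum of size up to $k_{f,M}M$ by $(H2)$, so linear theory gives only
\begin{equation*}
\|\Psi(v)\|_{L^\infty(Q)} \le C\Big(\|u\|_{L^p(0,T;H)}+\|w\|_{L^\infty(Q)}+\|y_0\|_{L^\infty(\Omega)}+k_{f,M}M\Big),
\end{equation*}
which need not be $\le M$; and since $k_{f,M}$ grows with $M$, enlarging the radius does not help. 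The standard repair is to truncate: replace $f(x,t,s,w)$ by $f(x,t,\min\{\max\{s,-M\},M\},w)$, which is globally bounded so your Schauder argument (whose compactness and continuity claims are fine, via the embedding \eqref{keyEmbed2} and Aubin--Lions--Simon) does apply to the truncated problem; then observe that $f(x,t,\cdot,w)-C_f(\cdot)$ being monotone with value $0$ at $0$ makes the sign condition survive truncation, so your Stampacchia estimate applies to the truncated equation a posteriori, yields $\|y\|_{L^\infty(Q)}\le M$, and shows the truncation is inactive — i.e., $y$ solves the original equation. Alternatively, invoke the existence statements of \cite{Casas-2020} and \cite{Casas-2022-1} outright, as the paper does.
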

\begin{proof}
    For each $w \in L^\infty(Q)$ and $u\in L^p(0, T; H)$  with $p>\frac{4}{4-N}$, we have $u +  w \in L^p(0, T; H)$.  Applying \cite[Theorem 2.1]{Casas-2020} and \cite[Theorem 2.1]{Casas-2022-1}, the equation \eqref{P2}-\eqref{P3} has a unique solution $y\in L^\infty(Q)\cap W(0, T)$ and inequality \eqref{KeyInq0} is fulfilled. Here, to obtain \eqref{KeyInq0} we use the first estimate in Theorem 2.1 of \cite{Casas-2020} and notice that $f(\cdot, \cdot, 0, \cdot) = 0$.
    
    On the other hand, $|y(x, t)|, |w(x, t)| \le M := \| y\|_{L^\infty(Q)} + \| w\|_{L^\infty(Q)} $ for a.a. $(x, t) \in Q$. Hence, from  assumption $(H2)$  there exists a constant $k_{f, M} > 0$  such that 
    \begin{align}
        |f(x, t, y(x, t), w(x, t))| &= |f(x, t, y(x, t), w(x, t)) - f(x, t, 0, w(x, t))| \nonumber \\ 
        &\le k_{f,M}|y(x, t)| \le k_{f,M}\| y\|_{L^\infty(Q)},
    \end{align}
    for a.a. $(x, t) \in Q$. It follows that $f(\cdot, \cdot, y, w)\in L^\infty(Q)$, and so $\eta_1(\cdot,  \cdot, y, u, w) :=  u + w - f(\cdot, \cdot, y, w)\in L^p(0, T; H)$. These facts allow us to apply improved regularity of solutions to the following parabolic equation 
    \begin{align}
    \label{key.equ1}
        \frac{\partial y}{\partial t} + Ay = \eta_1,\quad y(0) = y_0. 
    \end{align}
According to \cite[Theorem 5, p. 360]{Evan}, the (\ref{key.equ1}) has a unique solution $y\in W^{1, 1}_2(0, T; D, H)\cap L^\infty(0, T; V)$ and the following estimate is satisfied for some constant $C_2' > 0$:
\begin{align}
    \label{KeyInq1.1}
    \Big\| \frac{\partial y}{\partial t}\Big\|_{L^2(0, T; H)} +\|y\|_{L^2(0, T; D)} \le  C_2' \Big(\|\eta_1\|_{L^2(0, T; H)}  + \|y_0\|_V\Big).  
\end{align}
Also, we have 
\begin{align}
    \label{KeyInq1.2}
    &\|\eta_1\|_{L^2(0, T; H)} \nonumber \\ 
    &\le \|u\|_{L^2(0, T; H)} + \|w\|_{L^2(0, T; H)} + \|f(\cdot, \cdot, y, w)\|_{L^2(0, T; H)} \nonumber \\
    &\le C_4 \|u\|_{L^p(0, T; H)} + C_5\|w\|_{L^\infty(Q)} + C_5\|f(\cdot, \cdot, y, w)\|_{L^\infty(Q)} \nonumber \\
    &\le C_4 \|u\|_{L^p(0, T; H)} + C_5\|w\|_{L^\infty(Q)} + C_5 k_{f,M}\|y\|_{L^\infty(Q)} \nonumber \\
    &\le C_4 \|u\|_{L^p(0, T; H)} + C_5\|w\|_{L^\infty(Q)} 
    + C_5 k_{f,M}C_1 \Big(\|u\|_{L^p(0, T; H)} + \|w\|_{L^\infty(Q)} + \|y_0\|_{L^\infty(\Omega)}\Big),
\end{align} 
for some constants $C_4, C_5 > 0$. From (\ref{KeyInq1.1}) and (\ref{KeyInq1.2}) we obtain (\ref{KeyInq1}).   Moreover, we have $\frac{\partial y}{\partial t} + Ay = u + w - f(\cdot, \cdot, y, w) \in L^p(0, T; H)$. This implies that the solution $y \in Y$. 

The lemma is proved.
\end{proof}

\begin{lemma}
\label{Lemma-stateEq.01}
Suppose that $(H1)$ and $(H2)$ are satisfied and $y_0\in L^\infty(\Omega)\cap H_0^1(\Omega)$. Let  $(\widehat u, \widehat w) \in U \times W$ and assume that   $(u, w) \in B_U(\widehat u, r) \times B_W(\widehat w, s)$ for some positive constants $r, s > 0$. Then there exists a constant $C > 0$ such that 
\begin{align}
    \label{KeyInq2}
    \|y - \widehat y\|_Y  \le C \Big(\|u - \widehat u\|_U + \|w - \widehat w\|_W \Big)
\end{align}
where $y, \widehat y$ are solutions to (\ref{P2})-(\ref{P3}) with $(u, w) = (u, w)$, $(u, w) = (\widehat u, \widehat w)$, respectively;  and $\|\cdot\|_Y = \|\cdot\|_{W^{1, 1}_2(0, T; D, H)} + \|\cdot\|_{L^\infty(Q)} + \|\frac{\partial(\cdot)}{\partial t} + A(\cdot)\|_{L^p(0, T; H)}$, $\|\cdot \|_U = \|\cdot\|_W = \|\cdot\|_{L^\infty(Q)}$.
\end{lemma}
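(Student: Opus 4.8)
The plan is to reduce the nonlinear difference to a \emph{linear} parabolic equation and then invoke the a priori estimates already established in Lemma \ref{Lemma-stateEq}. First I would set $z := y - \widehat y$ and subtract the two state equations \eqref{P2}-\eqref{P3} satisfied by $y$ (with data $(u, w)$) and by $\widehat y$ (with data $(\widehat u, \widehat w)$). Using the fundamental theorem of calculus I would write the $y$-increment of $f$ as $f(x, t, y, w) - f(x, t, \widehat y, w) = b(x, t)\, z(x, t)$, where $b(x, t) := \int_0^1 f_y(x, t, \widehat y + \tau z, w)\, d\tau$, and isolate the $w$-increment $f(x, t, \widehat y, w) - f(x, t, \widehat y, \widehat w)$. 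This casts $z$ as the solution of the linear problem
$$\frac{\partial z}{\partial t} + Az + b(x, t) z = \eta, \qquad z(0) = 0,$$
with right-hand side $\eta := (u - \widehat u) + (w - \widehat w) - \big[f(\cdot, \cdot, \widehat y, w) - f(\cdot, \cdot, \widehat y, \widehat w)\big]$.

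Next I would establish the uniform bounds on which the whole estimate rests. Because $(u, w)$ ranges over the fixed balls $B_U(\widehat u, r) \times B_W(\widehat w, s)$, estimate \eqref{KeyInq0} of Lemma \ref{Lemma-stateEq} bounds $\|y\|_{L^\infty(Q)}$ and $\|\widehat y\|_{L^\infty(Q)}$ by a constant depending only on $r, s, \widehat u, \widehat w, y_0$; together with $\|w\|_{L^\infty(Q)} \le \|\widehat w\|_{L^\infty(Q)} + s$ this produces a single $M > 0$ with $|y|, |\widehat y|, |w|, |\widehat w| \le M$ a.e. Assumption $(H2)$ then gives $C_f \le b(x, t) \le k_{f, M}$ and, for the $w$-increment, $|f(\cdot, \cdot, \widehat y, w) - f(\cdot, \cdot, \widehat y, \widehat w)| \le k_{f, M}|w - \widehat w|$, whence
$$\|\eta\|_{L^p(0, T; H)} \le C\|\eta\|_{L^\infty(Q)} \le C'\big(\|u - \widehat u\|_U + \|w - \widehat w\|_W\big),$$
the first inequality using that $Q$ has finite measure and $\eta \in L^\infty(Q)$. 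Crucially, $M$, and hence $k_{f, M}$ and all the constants, are independent of the particular $(u, w)$.

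Finally I would read off the three pieces of the $Y$-norm. Viewing the equation for $z$ as a state equation with admissible nonlinearity $\tilde f(x, t, z) := b(x, t) z$ (Carath\'{e}odory, $C^2$ in $z$, $\tilde f(x, t, 0) = 0$, $\tilde f_z = b \ge C_f$ and bounded by $k_{f, M}$) and zero initial datum, Lemma \ref{Lemma-stateEq} applied to this equation yields $\|z\|_{L^\infty(Q)} \le C_1 \|\eta\|_{L^p(0, T; H)}$ from \eqref{KeyInq0} and $\|z\|_{W^{1, 1}_2(0, T; D, H)} \le C_2\|\eta\|_{L^p(0, T; H)}$ from \eqref{KeyInq1}. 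The remaining term follows from the equation itself, since $\frac{\partial z}{\partial t} + Az = \eta - b z$ gives
$$\Big\|\frac{\partial z}{\partial t} + Az\Big\|_{L^p(0, T; H)} \le \|\eta\|_{L^p(0, T; H)} + k_{f, M}\|z\|_{L^p(0, T; H)} \le C\|\eta\|_{L^p(0, T; H)},$$
using $\|z\|_{L^p(0, T; H)} \le C\|z\|_{L^\infty(Q)}$. Adding the three estimates and combining with the bound on $\|\eta\|_{L^p(0, T; H)}$ from the previous step delivers \eqref{KeyInq2}.

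The main obstacle is not any single estimate but ensuring that the constant $C$ in \eqref{KeyInq2} is \emph{uniform} over the balls; this is precisely where the hypothesis $(u, w) \in B_U(\widehat u, r) \times B_W(\widehat w, s)$ enters, through the uniform $L^\infty$ bound $M$ and the resulting uniform $k_{f, M}$. A secondary technical point is the admissibility of $\tilde f$ for the regularity results underlying Lemma \ref{Lemma-stateEq} when $C_f < 0$; this is harmless because the monotonicity condition $\tilde f_z \ge C_f$ is exactly the one-sided bound those theorems require on a finite horizon $[0, T]$, so no sign restriction on $C_f$ is needed.
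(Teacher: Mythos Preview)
Your proposal is correct and follows essentially the same route as the paper: linearize the difference equation for $z = y - \widehat y$, bound the coefficient and right-hand side uniformly via $(H2)$ and the ball hypothesis, apply Lemma~\ref{Lemma-stateEq} (or the underlying Evans regularity) for the $L^\infty$ and $W^{1,1}_2$ estimates, and extract the $\|\partial_t z + Az\|_{L^p(0,T;H)}$ term directly from the equation. The only cosmetic difference is that the paper linearizes via a mean-value point $f_y(\cdot,\cdot,\widehat y + \theta z, w)$ with a measurable selection $\theta$ obtained from a set-valued argument, whereas you use the equivalent integral average $b = \int_0^1 f_y(\cdot,\cdot,\widehat y + \tau z, w)\,d\tau$, which sidesteps the selection step entirely.
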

\begin{proof}
    Since $y, \widehat y$ are solutions to (\ref{P2})-(\ref{P3}) with $(u, w) = (u, w)$ and  $(u, w) = (\widehat u, \widehat w)$, respectively, 
    \begin{align}
        \label{3.1}
        \begin{cases}
            \dfrac{\partial y}{\partial t}+ A y +f(x, t, y,  w) =  u +   w, \quad  y(0)=y_0,  \\
            \dfrac{\partial \widehat y}{\partial t}+ A \widehat y +f(x, t, \widehat y,  \widehat w) =  \widehat u +   \widehat w, \quad  \widehat y(0)=y_0. 
        \end{cases}
    \end{align}
    This implies that 
    \begin{align}
        \label{3.2}
        \begin{cases}
        \dfrac{\partial ( y- \widehat y_)}{\partial t}+ A(y- \widehat y) + f(x, t,   y,  w) - f(x, t, \widehat y, w)  \\
      \hspace{3.5cm } = \Big[ f(x, t, \widehat y, \widehat w) -  f(x, t, \widehat y,  w) \Big]  + (u -  \widehat u) +  (w - \widehat w),  \\  
      (y- \widehat y)(0) = 0. 
        \end{cases}
    \end{align}
By Taylor's expansion of function $y \mapsto f(x, t, y,  w)$, we have 
 \begin{align}
 \label{3.3}
f(x, t,  y, w) - f(x, t, \widehat y,  w) = f_y\Big(x, t, \widehat y +  \theta( y- \widehat y),  w \Big) ( y- \widehat y),\quad  0\leq \ \theta(x, t)\leq 1. 
\end{align}
Such a function $\theta$ does exist and it is measurable. Indeed, we consider the set-valued map $ H: Q\to 2^{\mathbb{R}}$ by setting
\begin{align}
 H(x, t)=\Big\{ \theta\in [0, 1]  &: f(x, t,  y(x,t),  w)-f(x, t, \widehat y(x,t),  w)  \nonumber  \\
&=f_y\Big(x, t, \widehat y(x,t) + \theta( y(x,t)-\widehat y(x,t)), w\Big)( y(x,t)-\widehat y(x,t))\Big\}.  
\end{align}
 By \cite[Theorem 8.2.9]{Aubin}, $H$ is measurable and has a measurable selection, that is, there exists a measurable mapping $ \theta: Q\to [0,1]$ such that $ \theta(x, t)\in  H(x, t)$ for a.a. $(x, t)\in Q.$ 
  From \eqref{3.2} and (\ref{3.3}), we get
     \begin{align}
        \label{3.4}
        \begin{cases}
        \dfrac{\partial ( y- \widehat y_)}{\partial t}+ A(y- \widehat y) + f_y\Big(x, t, \widehat y +  \theta( y- \widehat y),  w \Big) ( y- \widehat y)  \\
      \hspace{3.5cm } = \Big[ f(x, t, \widehat y, \widehat w) -  f(x, t, \widehat y,  w) \Big]  + (u -  \widehat u) +  (w - \widehat w),  \\  
      (y- \widehat y)(0) = 0. 
        \end{cases}
    \end{align}
    Since   $(u, w) \in B_U(\widehat u, r) \times B_W(\widehat w, s)$, 
    \begin{align*}
        &|u(x, t)| \le |u(x, t) - \widehat u(x, t)| + |\widehat u(x, t)| \le \|u - \widehat u\|_U + \|\widehat u\|_U = \|\widehat u\|_U + r;\\
        &|w(x, t)| \le |w(x, t) - \widehat w(x, t)| + |\widehat w(x, t)| \le \|w - \widehat w\|_W + \|\widehat w\|_W = \|\widehat w\|_W + s;
    \end{align*}
    for a.a. $(x, t)\in Q$. On the  other hand, apply $(i)$ of Lemma \ref{Lemma-stateEq}, there exist $c_1, c_2, c_3 > 0$ such that 
    \begin{align*}
        &|y(x, t)| \le  \|y\|_{L^\infty(Q)} \le c_1 (\|u\|_U + \|w\|_W + \|y_0\|_{L^\infty(\Omega)}) \le c_1 (\|\widehat u\|_U + \|\widehat w\|_W + r +s   + \|y_0\|_{L^\infty(\Omega)}),\\
        &|\widehat y(x, t)| \le  \|\widehat y\|_{L^\infty(Q)} \le c_2 (\|\widehat u\|_U + \|\widehat w\|_W + \|y_0\|_{L^\infty(\Omega)}), \\
        &{\rm and} \\
        &|\widehat y(x, t) + \theta(x, t)(y(x, t) - \widehat y(x, t))| \\
        &\le \|\widehat y + \theta(y - \widehat y)\|_{L^\infty(Q)} \le  2  \|\widehat y\|_{L^\infty(Q)} +  \| y\|_{L^\infty(Q)} \le  c_3 (\|\widehat u\|_U + \|\widehat w\|_W + r +s   + \|y_0\|_{L^\infty(\Omega)})
    \end{align*}
    for a.a. $(x, t)\in Q$. By $(H2)$, there exists $k_{f,M} > 0$ such that 
\begin{align*}
    &\Big|f_y\Big(x, t, \widehat y(x, t) +  \theta(x, t) ( y(x, t)-\widehat y(x, t)), w(x, t)\Big)\Big| \\ 
    &\le  k_{f,M}\Big(|\widehat y(x, t) +  \theta(x, t) ( y(x, t)-\widehat y(x, t))| +  | w(x, t)| \Big) + |f_y(x, t, 0, 0)| \\
    &\le  k_{f,M}\Big(c_3 (\|\widehat u\|_U + \|\widehat w\|_W + r +s   + \|y_0\|_{L^\infty(\Omega)}) +  \|\widehat w\|_W + s   \Big) + \|f_y(\cdot, \cdot, 0, 0)\|_{L^\infty(Q)}
\end{align*}
for a.a. $(x, t)\in Q$. Hence 
\begin{align}
    \label{3.5}
    f_y\Big(x, t, \widehat y +  \theta( y- \widehat y),  w\Big) \in L^\infty(Q).
\end{align}
Also, we have 
\begin{align}
    \label{3.6}
    |f(x, t, \widehat y(x, t), \widehat w(x, t)) -  f(x, t, \widehat y(x, t),  w(x, t)) | \le k_{f,M} |w(x, t) - \widehat w(x, t)| \le k_{f,M} \|w - \widehat w\|_W
\end{align}
for a.a. $(x, t)\in Q$. Hence
\begin{align}
\label{3.7}
    \|f(\cdot, \cdot, \widehat y,  \widehat w) - f(\cdot, \cdot, \widehat y,  w)\|_{L^\infty(Q)} \le k_{f,M} \|w - \widehat w \|_W. 
\end{align}  
This implies that the right-hand side of (\ref{3.4}) belongs to $L^\infty(Q)$. From these facts, applying regularity result from \cite[Theorem 5, p.360]{Evan} to the parabolic equation (\ref{3.4}), there exists a constant $c_4>0$ such that 
\begin{align}
\label{3.8}
        \| y - \widehat y\|_{W_2^{1, 1}(0, T; D, H)} &\le c_4\Big(\|f(\cdot, \cdot, \widehat y,  \widehat w) - f(\cdot, \cdot, \widehat y, w)\|_{L^2(Q)} + \|u - \widehat u\|_{L^2(Q)} +  \|w - \widehat w\|_{L^2(Q)} \Big).
    \end{align}
Combining (\ref{3.7}) with (\ref{3.8}), we obtain
    \begin{align}
    \label{3.9}
        \| y - \widehat y\|_{W_2^{1, 1}(0, T; D, H)} &\le c_4\Big(|Q|^{1/2}k_{f,M} \|w - \widehat w \|_W + \|u - \widehat u\|_{L^2(Q)} +  \|w - \widehat w\|_{L^2(Q)} \Big) \nonumber \\
        &\le c_5 \Big(\|u - \widehat u \|_U +  \|w - \widehat w \|_W \Big)
    \end{align}
for some constants $c_5 > 0$. 

Since the right-hand side of (\ref{3.4}) belongs to $L^\infty(Q) \subset L^p(0, T; H)$, apply $(i)$ of Lemma \ref{Lemma-stateEq} to the equation (\ref{3.4}), there exists a constant $c_6 > 0$ such that 
\begin{align}
    \label{3.10}
    \| y - \widehat y\|_{L^\infty(Q)} &\le c_6\Big(\|f(\cdot, \cdot, \widehat y,  \widehat w) - f(\cdot, \cdot, \widehat y, w)\|_{L^p(0, T; H)} + \|u - \widehat u\|_{L^p(0, T; H)} +  \|w - \widehat w\|_{L^p(0, T; H)} \Big) \nonumber \\
    &\le c_7 \Big(\|u - \widehat u \|_U +  \|w - \widehat w \|_W \Big)
\end{align}
for some constants $c_7 > 0$. Also, from this and the assumption $(H2)$, we have
\begin{align}
    \label{3.11}
    |f(x, t, y(x, t), w(x, t)) &- f(x, t, \widehat y(x, t), w(x, t))| \le k_{f, M}|y(x, t) - \widehat y(x, t)| \nonumber \\
    &\le  k_{f, M} \| y - \widehat y\|_{L^\infty(Q)} \le c_7  k_{f, M} \Big(\|u - \widehat u \|_U +  \|w - \widehat w \|_W \Big)
\end{align}
for a.a. $(x, t)\in Q$. From (\ref{3.2}),  (\ref{3.11}) and (\ref{3.10}). we have 
\begin{align}
    \label{3.12}
    \Big\| \dfrac{\partial ( y- \widehat y_)}{\partial t}+ A(y- \widehat y) \Big\|_{L^p(0, T; H)} &\le \|f(\cdot, \cdot, y, w) - f(\cdot, \cdot, \widehat y, w)\|_{L^p(0, T; H)} \nonumber \\
    &+    \|f(\cdot, \cdot, \widehat y,  \widehat w) - f(\cdot, \cdot, \widehat y, w)\|_{L^p(0, T; H)} \nonumber \\   
    &+ \|u - \widehat u\|_{L^p(0, T; H)} +  \|w - \widehat w\|_{L^p(0, T; H)} \nonumber \\
    &\le (c_7  k_{f, M} |\Omega|^{1/2}T^{1/p} + \frac{c_7}{c_6})\Big(\|u - \widehat u \|_U +  \|w - \widehat w \|_W \Big).
\end{align}
From (\ref{3.9}), (\ref{3.10}) and  (\ref{3.12}), we obtain (\ref{KeyInq2}). This completes the proof of the lemma. 
\end{proof}

\section{First-order optimality conditions and regularity of Lagrange multipliers}

Let us define Banach spaces
\begin{align*}
    &Z = Y \times U,\ E_0 = E_{01} \times E_{02},\quad E = L^\infty(Q), \\
    &E_{01} = L^p(0, T; H), \quad  E_{02} =  L^\infty(\Omega) \cap H_0^1(\Omega) \quad {\rm with} \quad    p > \frac{4}{4 - N}. 
\end{align*}
Define mappings $F: Y\times U \times W \to E_0$ and  $G: Y\times U \times W\to E$ by setting
\begin{align}
     F(y, u, w) = (F_1(y, u, w),\   F_2(y, u, w)) = \bigg(\frac{\partial y}{\partial t} + Ay + f(x, t, y, w) - u - w,\ \   y(0) - y_0 \bigg)
\end{align} and 
\begin{align}   
     G(y, u, w) = g(\cdot, \cdot, y,  u, w).
\end{align}
By definition of space $Y$, if $(y, u, w) \in Y\times U \times W$ then $\frac{\partial y}{\partial t} + Ay \in L^p(0, T; H)$,  $f(x, t, y, w) \in L^\infty(Q) \subset L^p(0, T; H)$ (since $y, w \in L^\infty(Q)$) and $u + w  \in L^\infty(Q) \subset L^p(0, T; H)$. Hence  
$$
\frac{\partial y}{\partial t} + Ay + f(x, t, y, w) - u - w \in L^p(0, T; H) = E_{01}
$$ and $F_1$ is well defined. We now take any  $y \in Y$. By \eqref{keyEmbed2}, we have $y\in C(0, T; V)$. Hence $y(0)\in H_0^1(\Omega)$. Since $y\in L^\infty(Q)$, there exists a constant $M>0$ such that $|y(x, t)|\leq M$ for a.a. $(x, t)\in Q$. Particularly, if $t_n\in [0, T]$ such that $t_n\to 0$, we have 
$|y(x, t_n)|\leq M$ for all $n$ and a.a. $x\in\Omega$ and 
$$
y(\cdot, 0)=\lim_{n\to\infty} y(\cdot, t_n)\quad \text{in}\quad V.
$$ This implies that $\|y(\cdot, 0)-y(\cdot, t_n)\|_{L^2(\Omega)}\to 0$ as $n\to \infty$. By passing to a subsequence, we may assume that $y(x, t_n)\to y(x, 0)$ for a.a. $x\in\Omega$. It follows that 
\begin{align*}
 |y(x, 0)|\leq |y(x, 0)-y(x, t_n)| + |y(x, t_n)|\leq |y(x, 0)-y(x, t_n)| + M.
\end{align*} By passing to the limit when $n\to \infty$, we get $|y(x, 0)|\leq M$. Hence $y(\cdot, 0)\in L^\infty(\Omega)$. Consequently, $y(\cdot, 0)\in H_0^1(\Omega)\cap L^\infty(\Omega)$. Hence  $F_2$ is well defined and so is $F$.

From $(H2)$ and $(H3)$ we see that the mappings $J, F$ and $G$ are of class $C^2$ around $\widehat z_w  = (\widehat y_w, \widehat u_w) \in \Phi(w)$. Their  derivatives $D_{(y, u)}J(\widehat z_w, w)$, $D_{(y, u)}^2J(\widehat z_w, w)$, $D_{(y, u)}F(\widehat z_w, w)$, $D_{(y, u)}^2F(\widehat z_w, w)$,  $D_{(y, u)}G(\widehat z_w, w)$ and $D_{(y, u)}^2G(\widehat z_w, w)$ are given by 

     $D_{(y, u)}J(\widehat z_w, w)(y, u) = \displaystyle \int_Q(L_y[x, t, w]y + L_u[x, t, w]u)dxdt, $
     
    $D_{(y, u)}^2J(\widehat z_w, w)(y, u)^2 = \displaystyle \int_Q(L_{yy}[x, t, w]y^2 + 2L_{yu}[x, t, w]yu + L_{uu}[x, t, w]u^2)dxdt,  $
    
    $D_{(y, u)}F(\widehat z_w, w)(y, u) = (D_{(y, u)}F_1(\widehat z_w, w)(y, u), D_{(y, u)}F_2(\widehat z_w, w)(y, u)) $
    
    ${\rm where} \ D_{(y, u)}F_1(\widehat z_w, w)(y, u) = \dfrac{\partial y}{\partial t} + Ay + f_y[x, t,  w]y - u \ \ {\rm and} \ \ D_{(y, u)}F_2(\widehat z_w, w)(y, u) = y(0), $
    
    $D_{(y, u)}^2F(\widehat z_w, w)(y, u)^2 = \bigg(D_{(y, u)}^2F_1(\widehat z_w, w)(y, u)^2, \ \ D_{(y, u)}^2F_2(\widehat z_w, w)(y, u)^2\bigg) \label{A1.2} $
    
    ${\rm where} \ D_{(y, u)}^2F_1(\widehat z_w, w)(y, u)^2 = f_{yy}[x, t,  w]y^2 \ \ {\rm and} \ \ D_{(y, u)}^2F_2(\widehat z_w, w)(y, u)^2 = 0,$
    
    $D_{(y, u)}G(\widehat z_w, w)(y, u) = g_y[x, t, w]y + g_u[x, t, w]u, $
    
    $D_{(y, u)}^2G(\widehat z_w, w)(y, u)^2 = g_{yy}[x, t, w]y^2 + 2g_{yu}[x, t, w]yu + g_{uu}[x, t, w]u^2,$\\
for all $(y, u) \in Z$. 

We now formulate problem \eqref{P1}-\eqref{P4} in the form
\begin{align}
    &J(y,u, w) \to {\rm min}, \\
    &{\rm s.t.} \nonumber \\
    &F(y,u, w) = 0,\\
    &G(y,u, w) \in K_\infty.
    \end{align}

Based on \cite[Theorem 3.9]{Bonnans-2000}, we shall derive first-order optimality conditions for the problem $(P(\bar w))$ as follows.

\begin{proposition}
\label{4.01}
    Let $\bar z = (\bar y, \bar u) \in \Phi(\bar w)$. Suppose that assumptions $(H1)$-$(H4)$ are valid and $\bar z$ is a locally optimal solution of the problem $(P(\bar w))$.  Then there exist uniquely determined functions  $\varphi \in W^{1, 2}(0, T; D, H) \cap L^\infty(Q)$ and $e\in L^\infty(Q)$ such that the following conditions are satisfied:
        
\noindent $(i)$ (the adjoint equation) 
$$
-\dfrac{\partial \varphi}{\partial t} + A^* \varphi + f_y[\cdot, \cdot]\varphi = -L_y[\cdot, \cdot] - eg_y[\cdot, \cdot], \quad \varphi(., T) = 0, 
$$ where $A^*$ is the adjoint operator of $A$, which is defined by 
$$
A^*\varphi =  - \sum_{i,j = 1}^N D_i\left(a_{ij}\left( x \right) D_j \varphi \right);
$$ 
 
 \noindent $(ii)$ (the stationary condition in $ u$) 
$$
L_u[x, t] - \varphi(x,t) + e(x,t)g_u[x, t] = 0 \ \ {\rm a.a.} \  (x, t) \in Q;
$$

\noindent $(iii)$ (the complementary condition) 
\begin{align}
\label{ComlementCond.00}
e(x,t)g[x, t]=0  \quad   {\rm and} \quad   e(x, t)\geq 0\quad {\rm a.a.} \ (x,t)\in Q.
\end{align} 
\end{proposition}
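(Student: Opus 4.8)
The plan is to recast $(P(\bar w))$ as the abstract program $J(y,u,\bar w)\to\min$ subject to $F(y,u,\bar w)=0$ and $G(y,u,\bar w)\in K_\infty$, and to invoke the Karush--Kuhn--Tucker theorem \cite[Theorem 3.9]{Bonnans-2000}. First I would verify the Robinson constraint qualification for the pair $(F,G)$ at $\bar z$. Writing the joint constraint map $(F,G)$ with target cone $\{0\}\times K_\infty$, RCQ amounts to a surjectivity-type property: every $(v_1,v_2,\psi)\in E_{01}\times E_{02}\times E$ must be attainable, modulo $\mathrm{cone}(K_\infty-G(\bar z))$, by the linearization $D_{(y,u)}(F,G)(\bar z,\bar w)$. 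Here $(H4)$ is decisive: since $|g_u[x,t]|\ge\gamma_0>0$ a.e., one may first solve the linearized state equation $D_{(y,u)}F(\bar z,\bar w)(y,u)=(v_1,v_2)$ for $y$ given $u$, and then use the pointwise invertibility of multiplication by $g_u[\cdot,\cdot]$ to adjust $u$ so that $D_{(y,u)}G(\bar z,\bar w)(y,u)$ meets the inequality target, which yields RCQ as already announced after the assumptions. Theorem 3.9 of \cite{Bonnans-2000} then supplies multipliers $(\phi_1,\phi_2^*,e^*)\in L^q(0,T;H)\times(L^\infty(\Omega)\cap H_0^1(\Omega))^*\times(L^\infty(Q))^*$ with $\tfrac1p+\tfrac1q=1$, the stationarity $D_{(y,u)}\mathcal{L}(\bar y,\bar u,\phi_1,\phi_2^*,e^*,\bar w)=0$, and $e^*\in N_{K_\infty}(G(\bar z))$. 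Setting $\varphi:=\phi_1$ and reading the $y$-component of stationarity, an integration by parts in $t$ together with the symmetry of $A$ gives the weak adjoint equation $-\partial_t\varphi+A^*\varphi+f_y[\cdot,\cdot]\varphi=-L_y[\cdot,\cdot]-g_y[\cdot,\cdot]e^*$ with terminal condition $\varphi(\cdot,T)=0$ forced by the free endpoint $y(T)$, the endpoint at $t=0$ merely fixing $\phi_2^*$; the $u$-component gives $\int_Q L_u[x,t]v\,dx\,dt-\int_Q\varphi v\,dx\,dt+\langle e^*,g_u[\cdot,\cdot]v\rangle=0$ for all $v\in U$.

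The first regularity issue is that a priori $e^*$ is merely an element of $(L^\infty(Q))^*$, hence possibly a finitely additive measure with a singular part. To dispose of this I would again exploit $(H4)$ in the $u$-stationary condition: since $|g_u[\cdot,\cdot]|\ge\gamma_0$, for any $v\in L^\infty(Q)$ the function $v/g_u[\cdot,\cdot]$ still lies in $L^\infty(Q)$, so substituting it gives $\langle e^*,v\rangle=\int_Q\frac{\varphi-L_u[x,t]}{g_u[x,t]}\,v\,dx\,dt$ for every $v\in L^\infty(Q)$. Thus $e^*$ is represented by the density $e:=(\varphi-L_u[\cdot,\cdot])/g_u[\cdot,\cdot]$, which belongs to $L^q(0,T;H)\subset L^1(Q)$ because $\varphi\in L^q(0,T;H)$, $L_u[\cdot,\cdot]\in L^\infty(Q)$ and $1/g_u[\cdot,\cdot]\in L^\infty(Q)$; in particular the singular part vanishes. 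This identity is exactly the stationary condition $(ii)$, namely $L_u[\cdot,\cdot]-\varphi+e\,g_u[\cdot,\cdot]=0$.

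The heart of the argument, and the step I expect to be the main obstacle, is upgrading the regularity of $\varphi$ (and therefore of $e$) from this crude integrability to $\varphi\in W^{1,1}_2(0,T;D,H)\cap L^\infty(Q)$. With $e\in L^q(0,T;H)$ and $g_y[\cdot,\cdot]\in L^\infty(Q)$, the right-hand side $-L_y[\cdot,\cdot]-g_y[\cdot,\cdot]e$ of the adjoint equation lies in $L^q(0,T;H)$; reversing time $t\mapsto T-t$ turns the backward equation into a forward one governed by the uniformly elliptic operator $A^*$ with bounded zero-order coefficient $f_y[\cdot,\cdot]$, to which I would apply the parabolic regularity estimates underlying Lemma \ref{Lemma-stateEq} and \cite[Theorem 5, p.360]{Evan}. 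Since the conjugate exponent $q$ may fall below the threshold $4/(4-N)$, a single application need not give boundedness; instead I would run a bootstrap, in which parabolic smoothing raises the integrability of $\varphi$, which through $e=(\varphi-L_u[\cdot,\cdot])/g_u[\cdot,\cdot]$ raises that of the right-hand side, so that after finitely many steps the right-hand side belongs to $L^{p_*}(0,T;H)$ with $p_*>4/(4-N)$. At that point Lemma \ref{Lemma-stateEq} yields $\varphi\in Y$, hence in particular $\varphi\in L^\infty(Q)$ and $\varphi\in W^{1,1}_2(0,T;D,H)$; feeding this back through $(ii)$ and $(H4)$ then gives $e=(\varphi-L_u[\cdot,\cdot])/g_u[\cdot,\cdot]\in L^\infty(Q)$.

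Finally, the sign and complementarity conditions $(iii)$ follow from $e^*\in N_{K_\infty}(G(\bar z))$: testing the normal-cone inclusion against admissible directions $v\ge 0$ forces $e\ge 0$ a.e., while the complementarity $\int_Q e\,g[\cdot,\cdot]\,dx\,dt=0$ together with $e\ge 0$ and $g[\cdot,\cdot]\le 0$ forces $e\,g[\cdot,\cdot]=0$ a.e. For uniqueness, suppose $(\varphi_1,e_1)$ and $(\varphi_2,e_2)$ are two such pairs. Subtracting the stationary conditions and using $(H4)$ gives $e_1-e_2=(\varphi_1-\varphi_2)/g_u[\cdot,\cdot]$; inserting this into the difference of the adjoint equations produces a homogeneous linear backward parabolic equation for $\varphi_1-\varphi_2$ with zero terminal data and bounded coefficients $f_y[\cdot,\cdot]+g_y[\cdot,\cdot]/g_u[\cdot,\cdot]\in L^\infty(Q)$, whose only solution is $\varphi_1-\varphi_2=0$, whence $e_1=e_2$ as well.
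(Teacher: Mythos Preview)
Your overall architecture matches the paper's (the printed proof merely cites \cite{khanh.Kien.1}, but a full draft proof is present in the source): recast $(P(\bar w))$ abstractly, verify RCQ via $(H4)$, apply \cite[Theorem 3.9]{Bonnans-2000} to obtain raw multipliers $(\phi_1,\phi_2^*,e^*)$, use the $u$-stationarity together with $|g_u|\ge\gamma_0$ to represent $e^*$ by a density, then deduce complementarity and uniqueness exactly as you describe. These parts coincide.

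The genuine divergence is in how you upgrade the regularity of $\varphi$. You propose a bootstrap: solve the adjoint equation with right-hand side $-L_y-g_ye\in L^q(0,T;H)$, gain integrability for $\varphi$ by parabolic smoothing, feed this back through $e=(\varphi-L_u)/g_u$, and iterate. The paper instead uses a substitution that removes the bootstrap entirely: insert $e=(\varphi-L_u)/g_u$ from the $u$-stationarity \emph{into} the adjoint equation before solving, yielding a closed backward equation for $\varphi$ alone,
\[
-\partial_t\varphi + A^*\varphi + \Big(f_y[\cdot,\cdot]+\tfrac{g_y[\cdot,\cdot]}{g_u[\cdot,\cdot]}\Big)\varphi \;=\; -L_y[\cdot,\cdot]+\tfrac{g_y[\cdot,\cdot]}{g_u[\cdot,\cdot]}\,L_u[\cdot,\cdot],\qquad \varphi(\cdot,T)=0,
\]
whose zero-order coefficient and right-hand side are both in $L^\infty(Q)$ by $(H2)$--$(H4)$. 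A single application of Lemma~\ref{Lemma-stateEq} (after time reversal) gives $\varphi\in W^{1,1}_2(0,T;D,H)\cap L^\infty(Q)$ directly; a duality argument then identifies this $\varphi$ with the abstract multiplier $\phi_1$. Beyond being shorter, this avoids two soft spots in your plan: first, the smoothing gain you need at each bootstrap step (an $L^r\!\to\!L^s$ improvement with $r<2$) is not furnished by Lemma~\ref{Lemma-stateEq} or \cite[Theorem 5, p.~360]{Evan} as stated, so you would have to import additional $L^r$-maximal regularity; second, your integration by parts to obtain the adjoint equation for $\phi_1$ presupposes time-differentiability of $\phi_1$, which you only know \emph{after} the regularity is established---the paper sidesteps this circularity by constructing $\varphi$ independently and then proving $\varphi=\phi_1$.
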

\begin{proof}
    The reader is referred to \cite[Theorem 3.1, p. 8]{khanh.Kien.1} for the detailed proof  of this result.
\end{proof}

\medskip

For each  $(v, \widehat w) \in U \times W$, we define 
\begin{align}
    \label{SFw}
    S_F^{v, \widehat w} :&= \Big\{\zeta \in Y : \zeta \ \ {\rm solves} \ \ {\rm equation \ (1.2)-(1.3)} \ \ {\rm with} \ \ u = v, \  w = \widehat w \Big\} \nonumber \\
    &= \Big\{\zeta \in Y : F(\zeta, v, \widehat w) = 0 \Big\}.
\end{align}
By Lemma \ref{Lemma-stateEq}, the set $S_F^{v, \widehat w}$  has exactly one element.

\medskip

Next we show that assumption $(H4)$ remains valid under small perturbations.

\begin{lemma}
\label{4.02}
    Suppose that assumptions $(H3)$, $(H4)$ are satisfied and  $\bar z = (\bar y, \bar u)$ is a locally optimal solution of the problem $(P(\bar w))$. There exist positive numbers $r_1 > 0$ and $s_1 > 0$ such that 
    \begin{align}
    \Big|g_u(x, t,  \widehat y_w(x, t) ,  \widehat u_w(x, t),  w(x, t))\Big|  \ge \dfrac{\gamma_0}{2},
\end{align}
for a.a. $(x, t) \in Q$,  for all $(\widehat u_w, w) \in B_U(\bar u, r_1) \times B_W(\bar w, s_1)$ and  $\widehat y_w \in S_F^{\widehat u_w, w}$,  where $\gamma_0$ is given by the assumption $(H4)$.  
\end{lemma}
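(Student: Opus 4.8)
The plan is to exploit the local Lipschitz regularity of $g_u$ granted by $(H3)$ (with $\phi = g$) together with the uniform closeness of the perturbed triple $(\widehat y_w, \widehat u_w, w)$ to the reference triple $(\bar y, \bar u, \bar w)$, so that $g_u(x,t,\widehat y_w,\widehat u_w,w)$ cannot drop below half of its reference value, which by $(H4)$ is bounded below by $\gamma_0$.

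First I would fix an a priori $L^\infty$ bound. Restricting from the outset to $r_1, s_1 \le 1$, the perturbed controls $\widehat u_w$ and parameters $w$ stay in balls of fixed radius around $\bar u$ and $\bar w$, hence are uniformly bounded in $L^\infty(Q)$. Feeding this into estimate \eqref{KeyInq0} of Lemma \ref{Lemma-stateEq} produces a constant $M > 0$, independent of the particular choice of $(\widehat u_w, w)$, such that $\|\widehat y_w\|_{L^\infty(Q)}$, $\|\widehat u_w\|_{L^\infty(Q)}$, $\|w\|_{L^\infty(Q)}$ and the corresponding reference quantities are all bounded by $M$ almost everywhere. This fixes a single Lipschitz constant $k_{g,M}$ coming from $(H3)$ that is valid for all admissible perturbations.

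Next I would quantify the closeness. Applying Lemma \ref{Lemma-stateEq.01} with $(\widehat u, \widehat w) = (\bar u, \bar w)$ and $(u, w) = (\widehat u_w, w)$ gives $\|\widehat y_w - \bar y\|_Y \le C\big(\|\widehat u_w - \bar u\|_U + \|w - \bar w\|_W\big)$; since the $L^\infty(Q)$ norm is dominated by $\|\cdot\|_Y$, this controls $\|\widehat y_w - \bar y\|_{L^\infty(Q)}$ by $C(r_1 + s_1)$. Combining this with $\|\widehat u_w - \bar u\|_{L^\infty(Q)} \le r_1$ and $\|w - \bar w\|_{L^\infty(Q)} \le s_1$, and inserting these pointwise a.e. bounds into the Lipschitz estimate for $g_u$, yields for a.a. $(x,t) \in Q$
$$\big|g_u(x,t,\widehat y_w,\widehat u_w,w) - g_u(x,t,\bar y,\bar u,\bar w)\big| \le k_{g,M}(C+1)(r_1+s_1).$$

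Finally I would shrink the radii: choosing $r_1, s_1 \in (0,1]$ so that $k_{g,M}(C+1)(r_1+s_1) \le \gamma_0/2$, the reverse triangle inequality combined with $(H4)$, which gives $|g_u(x,t,\bar y,\bar u,\bar w)| \ge \gamma_0$ a.e., produces the desired bound $|g_u(x,t,\widehat y_w,\widehat u_w,w)| \ge \gamma_0/2$. The only delicate point is the order of quantifiers: I must fix $M$ (hence $k_{g,M}$) and the constant $C$ from Lemma \ref{Lemma-stateEq.01} \emph{before} selecting $r_1, s_1$, which is legitimate precisely because Lemma \ref{Lemma-stateEq} furnishes a uniform $L^\infty$ bound on $\widehat y_w$ once the radii are capped at $1$. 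Everything else is a routine perturbation estimate.
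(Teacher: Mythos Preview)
Your proposal is correct and follows essentially the same approach as the paper: bound the perturbed data uniformly in $L^\infty(Q)$, invoke Lemma~\ref{Lemma-stateEq.01} to control $\|\widehat y_w-\bar y\|_{L^\infty(Q)}$, apply the Lipschitz estimate for $g_u$ from $(H3)$, and then shrink $r_1,s_1$ so that the perturbation is at most $\gamma_0/2$. Your explicit handling of the quantifier order (capping $r_1,s_1\le 1$ first to fix $M$ and $k_{g,M}$ before choosing the final radii) is in fact a bit cleaner than the paper's presentation, which leaves this dependence implicit.
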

\begin{proof}
    Take  $r_1, s_1 > 0$ and $(\widehat u_w, w) \in B_U(\bar u, r_1) \times B_W(\bar w, s_1)$, $\widehat y_w \in S_F^{\widehat u_w, w}$. By the assumption $(H4)$,  we have 
    \begin{align}
        \label{4.02.1}
        \gamma_0 &\le  |g_u(x, t,  \bar y(x, t) ,  \bar u_w(x, t),  \bar w(x, t))| \nonumber \\ &\le \Big|g_u(x, t,  \widehat y_w(x, t) ,  \widehat u_w(x, t),  w(x, t))  -  g_u(x, t,  \bar y(x, t) ,  \bar u_w(x, t),  \bar w(x, t))\Big| \nonumber \\ 
        &+ |g_u(x, t,  \widehat y_w(x, t) ,  \widehat u_w(x, t),  w(x, t))|
    \end{align}
    for a.a. $(x, t) \in Q$. Hence, 
    \begin{align}
        \label{4.02.2}
        &\Big|g_u(x, t,  \widehat y_w(x, t) ,  \widehat u_w(x, t),  w(x, t))\Big| \nonumber \\
        &\ge \gamma_0  -  \Big|g_u(x, t,  \widehat y_w(x, t) ,  \widehat u_w(x, t),  w(x, t))  -  g_u(x, t,  \bar y(x, t) ,  \bar u_w(x, t),  \bar w(x, t))\Big| 
    \end{align}
    for a.a. $(x, t) \in Q$.     Since $\widehat u_w \in B_U(\bar u, r_1)$ and  $w \in B_W(\bar w, s_1)$,
    \begin{align*}
        &|\widehat u_w(x, t)| \le \|\widehat u_w\|_{L^\infty(Q)} = \|(\widehat u_w - \bar u) + \bar u\|_{L^\infty(Q)} \le  \|\bar u\|_{L^\infty(Q)} +  \|\widehat u_w - \bar u\|_{L^\infty(Q)}  \le \|\bar u\|_{L^\infty(Q)} + r_1, \\
        &|w(x, t)| \le \|w\|_{L^\infty(Q)} = \|(w - \bar w) + \bar w\|_{L^\infty(Q)} \le \| \bar w\|_{L^\infty(Q)} + \|w - \bar w\|_{L^\infty(Q)}  \le \|\bar w\|_{L^\infty(Q)} + s_1,
    \end{align*}
    for a.a. $(x, t) \in Q$.  $F(\widehat y_w, \widehat u_w,  w) = 0$. Hence,  from  $(i)$ of Lemma \ref{Lemma-stateEq}, there exists a constant $c  > 0$ such that 
    \begin{align*}
        |\widehat y(x, t)| &\le \|\widehat y\|_{L^\infty(Q)} \le c(\|\widehat u_w\|_{L^\infty(Q)} + \| w\|_{L^\infty(Q)} + \|y_0\|_{L^\infty(\Omega)}) \\ 
        &\le c(\|\bar u\|_{L^\infty(Q)} + r_1 + \|\bar w\|_{L^\infty(Q)} + s_1 + \|y_0\|_{L^\infty(\Omega)}), 
    \end{align*}
    for a.a. $(x, t) \in Q$.    By assumption  $(H3)$, there exists a positive constant $k_{g, M} > 0$ such that 
\begin{align}
    &\Big|g_u(x, t,  \widehat y_w(x, t) ,  \widehat u_w(x, t),  w(x, t))  -  g_u(x, t,  \bar y(x, t) ,  \bar u_w(x, t),  \bar w(x, t))\Big|  \nonumber \\
    &\le k_{g, M}\Big(|\widehat y_w(x, t) - \bar y(x, t) | +  |\widehat u_w(x, t) - \bar u(x, t) | + |w(x, t) - \bar w(x, t) |  \Big)  \nonumber  \\
    &\le k_{g, M}\Big(\|\widehat y_w - \bar y\|_{L^\infty(Q)} +  \|\widehat u_w - \bar u\|_{L^\infty(Q)} + \|w - \bar w\|_{L^\infty(Q)} \Big), \label{4.02.3}
\end{align}
for a.a. $(x, t) \in Q$.   On the other hand, it follows from Lemma \ref{Lemma-stateEq.01} that there exists $C > 0$ such that 
\begin{align}
    \label{4.02.4}
    \|\widehat y_w - \bar y\|_{L^\infty(Q)} \le C \Big(  \|\widehat u_w - \bar u\|_{L^\infty(Q)} + \|w - \bar w\|_{L^\infty(Q)}\Big). 
\end{align}
From (\ref{4.02.3}) and (\ref{4.02.4}), we obtain 
\begin{align}
    \label{4.02.5}
    &\Big|g_u(x, t,  \widehat y_w(x, t) ,  \widehat u_w(x, t),  w(x, t))  -  g_u(x, t,  \bar y(x, t) ,  \bar u_w(x, t),  \bar w(x, t))\Big|  \nonumber \\
    &\le k_{g, M}(C +  1)\Big(  \|\widehat u_w - \bar u\|_{L^\infty(Q)} + \|w - \bar w\|_{L^\infty(Q)} \Big) 
    \le k_{g, M}(C +  1)(r_1  + s_1)
\end{align}
for a.a. $(x, t) \in Q$.  We now choose $r_1, s_1 > 0$ small enough such that 
\begin{align}
    \label{4.02.6}
    k_{g, M}(C +  1)(r_1  + s_1) \le \frac{\gamma_0}{2}. 
\end{align}
With $r_1$ and $s_1$ defined as in (\ref{4.02.6}), combining (\ref{4.02.2}), (\ref{4.02.5}) and (\ref{4.02.6}), we get 
\begin{align}
    \Big|g_u(x, t,  \widehat y_w(x, t) ,  \widehat u_w(x, t),  w(x, t))\Big| \ge \gamma_0 - \frac{\gamma_0}{2} = \frac{\gamma_0}{2}
\end{align}
for a.a. $(x, t) \in Q$, which proves the lemma.
\end{proof}

\begin{proposition}
\label{4.03}
    Let $\bar z = (\bar y, \bar u) \in \Phi(\bar w)$. Suppose that assumptions $(H1)$-$(H4)$ are valid and $\bar z$ is a locally optimal solution of the problem $(P(\bar w))$.  There exist positive numbers $r_1 > 0$ and $s_1 > 0$ such that  the following assertion is fulfilled:  if $z_w = (\widehat y_w, \widehat u_w)$ is a locally optimal solution of the problem $(P(w))$ with $(\widehat u_w, w) \in B_U(\bar u, r_1) \times B_W(\bar w, s_1)$,  then  there exist uniquely determined functions  $\varphi_w \in W^{1, 2}(0, T; D, H) \cap L^\infty(Q)$ and $e_w\in L^\infty(Q)$ such that the following conditions are satisfied:
        
\noindent $(i)$ (the adjoint equation) 
$$
-\dfrac{\partial \varphi_w}{\partial t} + A^* \varphi_w + f_y[\cdot, \cdot, w]\varphi_w = -L_y[\cdot, \cdot, w] - e_wg_y[\cdot, \cdot, w], \quad \varphi_w(., T) = 0, 
$$ where $A^*$ is the adjoint operator of $A$, which is defined by 
$$
A^*\varphi_w =  - \sum_{i,j = 1}^N D_i\left(a_{ij}\left( x \right) D_j \varphi_w \right);
$$ 
 
 \noindent $(ii)$ (the stationary condition in $\widehat u_w$) 
$$
L_u[x, t, w] - \varphi_w(x,t) + e_w(x,t)g_u[x, t, w] = 0 \ \ {\rm a.a.} \  (x, t) \in Q;
$$

\noindent $(iii)$ (the complementary condition) 
\begin{align}
\label{ComlementCond.01}
e_w(x,t)g[x, t, w]=0  \quad   {\rm and} \quad   e_w(x, t)\geq 0\quad {\rm a.a.} \ (x,t)\in Q.
\end{align} 
\end{proposition}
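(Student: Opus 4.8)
The plan is to recognize that $(P(w))$ has exactly the same structure as $(P(\bar w))$, with the fixed parameter $w$ now playing the role that $\bar w$ played before, and then to apply Proposition \ref{4.01} verbatim to $(P(w))$ at its locally optimal solution $z_w = (\widehat y_w, \widehat u_w)$. The only thing to verify is that the four standing hypotheses needed in Proposition \ref{4.01} remain valid when the base point $(\bar y, \bar u, \bar w)$ is replaced by the perturbed point $(\widehat y_w, \widehat u_w, w)$.

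First I would observe that $(H1)$, $(H2)$ and $(H3)$ are properties of the data $a_{ij}$, $f$, $L$ and $g$ alone; they do not refer to any distinguished point and therefore carry over unchanged to $(P(w))$. The only hypothesis anchored at $\bar z$ and $\bar w$ is the Robinson-type qualification $(H4)$, and this is precisely where Lemma \ref{4.02} enters. I would fix $r_1, s_1 > 0$ as supplied by that lemma, note that a feasible $z_w \in \Phi(w)$ has its state component $\widehat y_w$ belonging to $S_F^{\widehat u_w, w}$ by the definition in (\ref{SFw}) (since $F(\widehat y_w, \widehat u_w, w) = 0$), and conclude from Lemma \ref{4.02} that
\begin{align*}
\big|g_u(x, t, \widehat y_w(x, t), \widehat u_w(x, t), w(x, t))\big| \ge \frac{\gamma_0}{2} > 0 \quad {\rm a.a.}\ (x, t) \in Q,
\end{align*}
for all $(\widehat u_w, w) \in B_U(\bar u, r_1) \times B_W(\bar w, s_1)$. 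Hence $(H4)$ holds at the perturbed point with the uniform constant $\gamma_0/2$ in place of $\gamma_0$.

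With $(H1)$--$(H3)$ intact and $(H4)$ verified with constant $\gamma_0/2$, I would apply Proposition \ref{4.01} to the problem $(P(w))$ at the locally optimal solution $z_w$. This yields uniquely determined multipliers $\varphi_w \in W^{1, 2}(0, T; D, H) \cap L^\infty(Q)$ and $e_w \in L^\infty(Q)$ satisfying the adjoint equation, the stationary condition in $\widehat u_w$, and the complementarity condition, which are exactly assertions $(i)$--$(iii)$. The regularity $\varphi_w \in W^{1,2}(0,T;D,H) \cap L^\infty(Q)$ and $e_w \in L^\infty(Q)$ comes out of the same argument, because the representation of the multiplier associated with the pointwise constraint by an $L^\infty$-density rests solely on the lower bound for $|g_u|$, now furnished uniformly by $\gamma_0/2$.

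The step I expect to require the most care is ensuring that the constraint qualification holds uniformly over the whole neighborhood rather than merely at the single base point, so that the same radii $r_1, s_1$ serve for every admissible $(\widehat u_w, w)$; this is exactly what Lemma \ref{4.02} delivers through the uniform bound $\gamma_0/2$, and it is the reason that lemma was isolated beforehand. Beyond that, the argument is a direct transcription of the proof of Proposition \ref{4.01}, so no genuinely new analytical obstacle arises.
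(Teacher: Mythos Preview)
Your proposal is correct and follows essentially the same approach as the paper: invoke Lemma \ref{4.02} to obtain radii $r_1,s_1$ for which $(H4)$ persists (with constant $\gamma_0/2$) at the perturbed point, observe that $(H1)$--$(H3)$ are point-independent, and then repeat the proof of Proposition \ref{4.01} verbatim for $(P(w))$ at $z_w$. The paper's own proof is in fact even terser, simply citing Lemma \ref{4.02} and stating that the rest is done as in Proposition \ref{4.01}.
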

\begin{proof}
By Lemma \ref{4.02},  there exist positive numbers $r_1 > 0$ and $s_1 > 0$ such that the assumption $(H4)$ is still satisfied at $(\widehat y_w, \widehat u_w) \in \Phi(w)$. The rest of the proof of Proposition \ref{4.03} can be done in the same way as the proof of Proposition \ref{4.01}.
\end{proof}

\section{Stability of  Lagrange multipliers}

In this section we establish stability estimates for the associated Lagrange multipliers. The result is the following.  
\begin{lemma}
\label{5.0}
    Let the assumptions of Proposition \ref{4.03} be satisfied. Then there exist  constants $M_1, M_2 > 0$ independent of $w$ such that 
    \begin{align}
        &\|e - e_w\|_{L^2(Q)} \le  M_1 \Big( \|\widehat u_w - \bar u\|_{L^2(Q)} + \|w - \bar w\|_{L^\infty(Q)} \Big), \label{5.00.1} \\        
        &\|\varphi_w - \varphi\|_{L^\infty(Q)}  + \|e_w - e\|_{L^\infty(Q)}   \le  M_2 \Big( \|\widehat u_w - \bar u\|_{L^\infty(Q)} + \|w - \bar w\|_{L^\infty(Q)} \Big). \label{5.00.2}
    \end{align} for all $w\in B_W(\bar w, s_1)$. 
\end{lemma}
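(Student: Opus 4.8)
The plan is to subtract the two optimality systems of $(P(w))$ and $(P(\bar w))$ so as to obtain a single linear parabolic equation for the adjoint difference $\psi := \varphi_w - \varphi$, and then to recover $e_w - e$ algebraically from the stationarity conditions. The crux is the closed-loop coupling: the adjoint equation for $\psi$ carries a source containing $e_w - e$, while $e_w - e$ is itself determined by $\psi$ through the stationarity condition in the control. I would break this coupling by substitution, which is only possible because $g_u$ is bounded away from zero uniformly in $w$ by Lemma \ref{4.02}.

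First, from the stationarity conditions $(ii)$ of the two problems, $e_w\,g_u[\cdot,\cdot,w] = \varphi_w - L_u[\cdot,\cdot,w]$ and $e\,g_u[\cdot,\cdot] = \varphi - L_u[\cdot,\cdot]$. Since $|g_u[\cdot,\cdot]| \ge \gamma_0$ by $(H4)$ and $|g_u[\cdot,\cdot,w]| \ge \gamma_0/2$ by Lemma \ref{4.02}, division is legitimate and gives
$$
e_w - e = \frac{\psi}{g_u[\cdot,\cdot,w]} + R, \qquad R := \varphi\Big(\frac{1}{g_u[\cdot,\cdot,w]} - \frac{1}{g_u[\cdot,\cdot]}\Big) - \Big(\frac{L_u[\cdot,\cdot,w]}{g_u[\cdot,\cdot,w]} - \frac{L_u[\cdot,\cdot]}{g_u[\cdot,\cdot]}\Big).
$$
Because $\varphi, L_u \in L^\infty(Q)$ and the bracketed quantities are Lipschitz in their arguments by $(H3)$ (with denominators bounded below), one obtains the pointwise bound $|R| \le C(|\widehat y_w - \bar y| + |\widehat u_w - \bar u| + |w - \bar w|)$ a.e. Substituting this expression for $e_w - e$ into the difference of the two adjoint equations $(i)$, the term $(e_w - e)g_y[\cdot,\cdot,w]$ splits into $\tfrac{g_y[\cdot,\cdot,w]}{g_u[\cdot,\cdot,w]}\psi$ plus lower-order data, so $\psi$ solves
$$
-\frac{\partial \psi}{\partial t} + A^*\psi + \Big(f_y[\cdot,\cdot,w] + \frac{g_y[\cdot,\cdot,w]}{g_u[\cdot,\cdot,w]}\Big)\psi = \Theta, \qquad \psi(\cdot, T) = 0,
$$
with source
$$
\Theta = -(f_y[\cdot,\cdot,w] - f_y[\cdot,\cdot])\varphi - (L_y[\cdot,\cdot,w] - L_y[\cdot,\cdot]) - R\,g_y[\cdot,\cdot,w] - e\,(g_y[\cdot,\cdot,w] - g_y[\cdot,\cdot]).
$$
Crucially, the zeroth-order coefficient $f_y[\cdot,\cdot,w] + g_y[\cdot,\cdot,w]/g_u[\cdot,\cdot,w]$ lies in $L^\infty(Q)$ with a bound independent of $w$, thanks to $(H2)$, $(H3)$ and Lemma \ref{4.02}.

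Using the Lipschitz estimates in $(H2)$ and $(H3)$ together with $\varphi, e, g_y[\cdot,\cdot,w] \in L^\infty(Q)$, the source obeys $|\Theta| \le C(|\widehat y_w - \bar y| + |\widehat u_w - \bar u| + |w - \bar w|)$ a.e. For the $L^\infty$ assertion \eqref{5.00.2} I would estimate $\|\Theta\|_{L^\infty(Q)}$ and invoke Lemma \ref{Lemma-stateEq.01} to bound $\|\widehat y_w - \bar y\|_{L^\infty(Q)}$ by $C(\|\widehat u_w - \bar u\|_{L^\infty(Q)} + \|w - \bar w\|_{L^\infty(Q)})$; then the time-reversed function $\widetilde\psi(t) := \psi(T - t)$ solves a linear parabolic equation whose $L^\infty$-coefficient and source $\Theta \in L^\infty(Q) \subset L^p(0,T;H)$ let Lemma \ref{Lemma-stateEq} yield $\|\psi\|_{L^\infty(Q)} \le C\|\Theta\|_{L^p(0,T;H)}$. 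Feeding this and the bound on $R$ into $e_w - e = \psi/g_u[\cdot,\cdot,w] + R$ gives the $L^\infty$ estimate for $e_w - e$, and with $\|\varphi_w - \varphi\|_{L^\infty(Q)} = \|\psi\|_{L^\infty(Q)}$ this produces \eqref{5.00.2}.

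For the $L^2$ assertion \eqref{5.00.1} the same scheme applies with $L^2$ norms, but it first requires an $L^2$ state estimate $\|\widehat y_w - \bar y\|_{L^2(Q)} \le C(\|\widehat u_w - \bar u\|_{L^2(Q)} + \|w - \bar w\|_{L^\infty(Q)})$, which I would obtain by a standard energy estimate and Gronwall's inequality applied to the linearized state-difference equation, whose right-hand side $[f(\cdot,\cdot,\bar y,\bar w) - f(\cdot,\cdot,\bar y,w)] + (\widehat u_w - \bar u) + (w - \bar w)$ has $L^2$-norm controlled by the claimed quantity since $f$ is Lipschitz in $w$. With this, $\|\Theta\|_{L^2(Q)} \le C(\|\widehat u_w - \bar u\|_{L^2(Q)} + \|w - \bar w\|_{L^\infty(Q)})$, a standard $L^2(0,T;H)$-energy estimate for the $\psi$-equation gives $\|\psi\|_{L^2(Q)} \le C\|\Theta\|_{L^2(Q)}$, and the representation $e_w - e = \psi/g_u[\cdot,\cdot,w] + R$ with $|g_u[\cdot,\cdot,w]| \ge \gamma_0/2$ delivers \eqref{5.00.1}. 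The main difficulty throughout is precisely the coupling between the adjoint difference and the multiplier difference; the structural point that makes it tractable is that after substitution the coefficient $f_y[\cdot,\cdot,w] + g_y[\cdot,\cdot,w]/g_u[\cdot,\cdot,w]$ stays uniformly bounded in $w$, which is exactly what Lemma \ref{4.02} guarantees.
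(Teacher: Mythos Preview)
Your proposal is correct and follows essentially the same route as the paper: subtract the two adjoint equations, use the stationarity conditions to eliminate $e_w-e$ in terms of $\psi=\varphi_w-\varphi$, arrive at a linear backward parabolic equation for $\psi$ with zeroth-order coefficient $f_y[\cdot,\cdot,w]+g_y[\cdot,\cdot,w]/g_u[\cdot,\cdot,w]\in L^\infty(Q)$, estimate the source pointwise, and then recover $e_w-e$ algebraically. The only cosmetic differences are that the paper groups the source as $A_1+A_2+A_3$ (substituting $e_w,e$ separately rather than via your remainder $R$) and invokes external estimates from \cite{Casas-2023-2} for the $L^\infty$ and $L^2$ bounds on $\psi$, whereas you appeal to Lemma~\ref{Lemma-stateEq} after time reversal and to a standard energy argument; both are valid.
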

\begin{proof}  
Let $\bar z = (\bar y, \bar u)$  be locally optimal solution of the problem $(P(\bar w))$, and  $z_w = (\widehat y_w, \widehat u_w)$ is locally optimal solution of the problem $(P(w))$ with $(\widehat u_w, w) \in B_U(\bar u, r_1) \times B_W(\bar w, s_1)$, as in Proposition \ref{4.03}. Then, by Proposition \ref{4.01} and \ref{4.03} there exist $(\varphi, e) \in \Lambda_\infty [(\bar y, \bar u), \bar w]$ and  $(\varphi_w, e_w) \in \Lambda_\infty [(\widehat y_w, \widehat u_w), w]$. This implies that the following conditions are satisfied:
    \begin{align}
        & - \frac{\partial \varphi}{\partial t} + A^* \varphi + f_y[\cdot, \cdot] \varphi = - L_y[\cdot, \cdot] - eg_y[\cdot, \cdot], \quad \quad \varphi (\cdot, T) = 0;   \label{cond.1}  \\
        & - \frac{\partial \varphi_w}{\partial t} + A^* \varphi_w + f_y[\cdot, \cdot, w] \varphi_w = - L_y[\cdot, \cdot, w] - e_wg_y[\cdot, \cdot, w], \quad \quad \varphi_w (\cdot, T) = 0; \label{cond.2} \\
        & L_u[x, t] - \varphi(x,t) + e(x,t)g_u[x, t] = 0 \ \ {\rm a.a.} \  (x, t) \in Q; \label{cond.3}  \\
        & L_u[x, t, w] - \varphi_w(x,t) + e_w(x,t)g_u[x, t, w] = 0 \ \ {\rm a.a.} \  (x, t) \in Q;  \label{cond.4} \\
        & e(x,t)g[x, t]=0 \  \ {\rm and} \  \ e(x, t)\geq 0\quad {\rm a.a.} \ (x,t)\in Q; \\
        & e_w(x,t)g[x, t, w]=0 \  \ {\rm and} \  \ e_w(x, t)\geq 0\quad {\rm a.a.} \ (x,t)\in Q;
    \end{align}
    Subtracting (\ref{cond.1}) from (\ref{cond.2}), we have
    \begin{align}
        &- \frac{\partial (\varphi_w - \varphi)}{\partial t} + A^* (\varphi_w - \varphi) + \Big(f_y[\cdot, \cdot, w] \varphi_w - f_y[\cdot, \cdot] \varphi\Big) \nonumber \\ 
        & \hspace{4.5cm} = - \Big(L_y[\cdot, \cdot, w] - L_y[\cdot, \cdot] \Big) - \Big(e_wg_y[\cdot, \cdot, w] - eg_y[\cdot, \cdot] \Big), \label{Eq.1}  \\
        &\quad \varphi_w - \varphi  =  0 \quad {\rm on} \ \Sigma \quad {\rm and} \quad (\varphi_w - \varphi)(\cdot, T) = 0 \quad {\rm in} \ \Omega.
    \end{align}
     On the other hand, we have   from conditions (\ref{cond.4}) and (\ref{cond.3}) that 
    \begin{align}
    \label{Eq.2}
          e_w = \frac{1}{g_u[\cdot, \cdot, w]}\Big(\varphi_w - L_u[\cdot, \cdot, w] \Big) \quad \quad {\rm and} \quad \quad   e = \frac{1}{g_u[., .]}\Big(\varphi - L_u[., .] \Big).
    \end{align}
    Inserting (\ref{Eq.2}) into (\ref{Eq.1}), we obtain 
    \begin{align*}
        &- \frac{\partial (\varphi_w - \varphi)}{\partial t} + A^* (\varphi_w - \varphi) + \Big(f_y[\cdot, \cdot, w] \varphi_w - f_y[\cdot, \cdot] \varphi\Big) \\
        &\hspace{3cm} = - \Big(L_y[\cdot, \cdot, w] - L_y[\cdot, \cdot] \Big) - \frac{g_y[\cdot, \cdot, w]}{g_u[\cdot, \cdot, w]}\Big(\varphi_w - L_u[\cdot, \cdot, w] \Big) + \frac{g_y[\cdot, \cdot]}{g_u[., .]}\Big(\varphi - L_u[., .] \Big)
    \end{align*}
    or equivalently, 
    \begin{align}
        \label{Eq.3}
        - \frac{\partial (\varphi_w - \varphi)}{\partial t} + A^* (\varphi_w - \varphi) &+ \bigg( f_y[\cdot, \cdot, w] + \frac{g_y[\cdot, \cdot, w]}{g_u[\cdot, \cdot, w]} \bigg) (\varphi_w - \varphi ) \nonumber \\
        & \quad \quad \quad   = - \Big(L_y[\cdot, \cdot, w] - L_y[\cdot, \cdot] \Big) \nonumber \\
        & \quad \quad \quad  +  \frac{g_y[\cdot, \cdot, w]}{g_u[\cdot, \cdot, w]} L_u[\cdot, \cdot, w] -  \frac{g_y[\cdot, \cdot]}{g_u[\cdot, \cdot]} L_u[\cdot, \cdot]   \nonumber \\
        & \quad \quad \quad   - \bigg[\Big(f_y[\cdot, \cdot, w] - f_y[\cdot, \cdot] \Big) + \Big( \frac{g_y[\cdot, \cdot, w]}{g_u[\cdot, \cdot, w]}  -  \frac{g_y[\cdot, \cdot]}{g_u[\cdot, \cdot]} \Big) \bigg] \varphi \nonumber  \\
        & \quad \quad \quad =:  A_1[\cdot, \cdot, w] +  A_2[\cdot, \cdot, w] + A_3[\cdot, \cdot, w], 
    \end{align}
    $\varphi_w - \varphi  =  0$ on $\Sigma$ \  and \  $(\varphi_w - \varphi)(\cdot, T) = 0$ in $\Omega$. 

    Since $\widehat u_w \in B_U(\bar u, r_1)$ and  $w \in B_W(\bar w, s_1)$,
    \begin{align*}
        &|\widehat u_w(x, t)| \le \|\widehat u_w\|_{L^\infty(Q)} = \|(\widehat u_w - \bar u) + \bar u\|_{L^\infty(Q)} \le  \|\bar u\|_{L^\infty(Q)} +  \|\widehat u_w - \bar u\|_{L^\infty(Q)}  \le \|\bar u\|_{L^\infty(Q)} + r_1, \\
        &|w(x, t)| \le \|w\|_{L^\infty(Q)} = \|(w - \bar w) + \bar w\|_{L^\infty(Q)} \le \| \bar w\|_{L^\infty(Q)} + \|w - \bar w\|_{L^\infty(Q)}  \le \|\bar w\|_{L^\infty(Q)} + s_1,
    \end{align*}
    for a.a. $(x, t) \in Q$.  $F(\widehat y_w, \widehat u_w,  w) = 0$. Hence,  from  $(i)$ of Lemma \ref{Lemma-stateEq}, there exists a constant $b_1 > 0$ such that 
    \begin{align*}
        |\widehat y(x, t)| &\le \|\widehat y\|_{L^\infty(Q)} \le b_1(\|\widehat u_w\|_{L^\infty(Q)} + \| w\|_{L^\infty(Q)} + \|y_0\|_{L^\infty(\Omega)}) \\ 
        &\le  b_1(\|\bar u\|_{L^\infty(Q)} + r_1 + \|\bar w\|_{L^\infty(Q)} + s_1 + \|y_0\|_{L^\infty(\Omega)}), 
    \end{align*}
    for a.a. $(x, t) \in Q$.    By assumption  $(H3)$, there exists a positive constant $k_{L, M} > 0$ such that 
\begin{align}
    \Big|A_1[x, t, w]\Big| &=  |L_y(x, t, \widehat y_w, \widehat u_w, w) - L_y(x, t, \bar y, \bar u, \bar w)| \nonumber \\
    &\le k_{L, M}\Big(|\widehat y_w(x, t) - \bar y(x, t) | +  |\widehat u_w(x, t) - \bar u(x, t) | + |w(x, t) - \bar w(x, t) |  \Big)  \label{5.1.1}  \\
    &\le k_{L, M}\Big(\|\widehat y_w - \bar y\|_{L^\infty(Q)} +  \|\widehat u_w - \bar u\|_{L^\infty(Q)} + \|w - \bar w\|_{L^\infty(Q)} \Big), \label{5.1.2}
\end{align}
for a.a. $(x, t) \in Q$. This yields
\begin{align}
    \label{5.2}
    \|A_1[\cdot, \cdot, w]\|_{L^\infty(Q)} \le k_{L, M}\Big(\|\widehat y_w - \bar y\|_{L^\infty(Q)} +  \|\widehat u_w - \bar u\|_{L^\infty(Q)} + \|w - \bar w\|_{L^\infty(Q)} \Big).
\end{align}
Also, 
\begin{align}
    &\Big|A_2[x, t, w]\Big| =  \bigg| \frac{g_y[x, t, w]}{g_u[x, t, w]} L_u[x, t, w] -  \frac{g_y[x, t]}{g_u[x, t]} L_u[x, t] \bigg| \nonumber \\
    &\le \bigg| \frac{g_y[x, t, w]}{g_u[x, t, w]} \bigg| \Big| L_u[x, t, w] - L_u[x, t] \Big| +  \Big|L_u[x, t] \Big| \bigg| \frac{g_y[x, t, w]}{g_u[x, t, w]} - \frac{g_y[x, t]}{g_u[x, t]} \bigg|  \nonumber \\
    &\le \frac{2k_{L, M}k_{g, M}}{\gamma_0}\Big(|\widehat y_w(x, t) - \bar y(x, t) | +  |\widehat u_w(x, t) - \bar u(x, t) | + |w(x, t) - \bar w(x, t) |  \Big)        \nonumber \\
    &+\frac{2k_{L, M}}{\gamma^2_0} \Big(|g_y[x, t, w]g_u[x, t] - g_y[x, t, w]g_u[x, t, w]| + |g_y[x, t, w]g_u[x, t, w] - g_u[x, t, w]g_y[x, t]|  \Big)     \nonumber \\
    &\le \frac{2k_{L, M}k_{g, M}}{\gamma_0}\Big(|\widehat y_w(x, t) - \bar y(x, t) | +  |\widehat u_w(x, t) - \bar u(x, t) | + |w(x, t) - \bar w(x, t) |  \Big)        \nonumber \\
    &+\frac{4k_{L, M}k^2_{g, M}}{\gamma^2_0}\Big(|\widehat y_w(x, t) - \bar y(x, t) | +  |\widehat u_w(x, t) - \bar u(x, t) | + |w(x, t) - \bar w(x, t) |  \Big)      \nonumber \\
    &= \big(\frac{2k_{L, M}k_{g, M}}{\gamma_0} +  \frac{4k_{L, M}k^2_{g, M}}{\gamma^2_0} \big) \big(|\widehat y_w(x, t) - \bar y(x, t) | +  |\widehat u_w(x, t) - \bar u(x, t) | + |w(x, t) - \bar w(x, t) |  \big)  \label{5.3.1}   \\
    &\le \big(\frac{2k_{L, M}k_{g, M}}{\gamma_0} +  \frac{4k_{L, M}k^2_{g, M}}{\gamma^2_0} \big)\Big(\|\widehat y_w - \bar y\|_{L^\infty(Q)} +  \|\widehat u_w - \bar u\|_{L^\infty(Q)} + \|w - \bar w\|_{L^\infty(Q)} \Big) \nonumber \\
    &=: b_2 \Big(\|\widehat y_w - \bar y\|_{L^\infty(Q)} +  \|\widehat u_w - \bar u\|_{L^\infty(Q)} + \|w - \bar w\|_{L^\infty(Q)} \Big) \label{5.3.2}
\end{align}
for a.a. $(x, t) \in Q$. Hence 
\begin{align}
    \label{5.3}
    \|A_2[\cdot, \cdot, w]\|_{L^\infty(Q)} \le b_2\Big(\|\widehat y_w - \bar y\|_{L^\infty(Q)} +  \|\widehat u_w - \bar u\|_{L^\infty(Q)} + \|w - \bar w\|_{L^\infty(Q)} \Big). 
\end{align}
By similar arguments as above, we evaluate
\begin{align}
    \label{5.4}
    \Big|A_3[x, t, w]\Big| &\le b_3 \Big(|\widehat y_w(x, t) - \bar y(x, t) | +  |\widehat u_w(x, t) - \bar u(x, t) | + |w(x, t) - \bar w(x, t) |  \Big)  \\
    &\le b_3\Big(\|\widehat y_w - \bar y\|_{L^\infty(Q)} +  \|\widehat u_w - \bar u\|_{L^\infty(Q)} + \|w - \bar w\|_{L^\infty(Q)} \Big), \nonumber 
\end{align}
where
\begin{align*}
    b_3 :=   \|\varphi\|_{L^\infty(Q)} {\rm max}(k_{f, M}, \  \frac{4k^2_{g, M}}{\gamma^2_0}).
\end{align*}
Put $A[\cdot, \cdot, w] := A_1[\cdot, \cdot, w] + A_2[\cdot, \cdot, w] + A_3[\cdot, \cdot, w]$ and $b_4 := k_{L, M} + b_2 +  b_3$. Then  $A[\cdot, \cdot, w] \in L^\infty(Q)$, which is the right-hand side of the equation (\ref{Eq.3}). Besides, we get
\begin{align}
    \Big|A[x, t, w]\Big| &\le  b_4 \Big(|\widehat y_w(x, t) - \bar y(x, t) | +  |\widehat u_w(x, t) - \bar u(x, t) | + |w(x, t) - \bar w(x, t) |  \Big) \label{5.5.1}  \\
    &\le  b_4\Big(\|\widehat y_w - \bar y\|_{L^\infty(Q)} +  \|\widehat u_w - \bar u\|_{L^\infty(Q)} + \|w - \bar w\|_{L^\infty(Q)} \Big). \label{5.5.2}
\end{align}
Moreover, we have 
\begin{align}
    \Big|f_y[x, t, w] + \frac{g_y[x, t, w]}{g_u[x, t, w]}  \Big| \le k_{f, M} + \frac{2k_{g, M}}{\gamma_0}  \quad {\rm a.a.} \ (x, t) \in Q.
\end{align}
The equation (\ref{Eq.3}) is very  similar to the parabolic equation (3.10) in \cite{Casas-2023-2} and the equation (33) in \cite{Casas-2015}. For (\ref{Eq.3}), we also obtain estimates that are similar to \cite[estimate (3.13) and  estimate  (3.15)]{Casas-2023-2}. Namely, there exist positive constants $b_5, b_6 > 0$ such that 
\begin{align}
    &\|\varphi_w - \varphi\|_{L^\infty(Q)} \le b_5 \|A[\cdot, \cdot, w]\|_{L^\infty(Q)}, \label{5.6.1} \\
    &\|\varphi_w - \varphi\|_{L^2(Q)} \le  b_6 \|A[\cdot, \cdot, w]\|_{L^2(Q)}. \label{5.6.2}
\end{align}
By Lemma \ref{Lemma-stateEq.01}, there exists  $b_7 > 0$ such that 
\begin{align}
    \label{5.7}
    \|\widehat y_w - \bar y\|_{L^\infty(Q)} \le  \|\widehat y_w - \bar y\|_Y \le b_7\Big(  \|\widehat u_w - \bar u\|_{L^\infty(Q)} + \|w - \bar w\|_{L^\infty(Q)} \Big).
\end{align}
From (\ref{5.6.1}), (\ref{5.5.2}) and (\ref{5.7}), we get 
\begin{align}
    \label{5.8}
    \|\varphi_w - \varphi\|_{L^\infty(Q)} \le b_8\Big(  \|\widehat u_w - \bar u\|_{L^\infty(Q)} + \|w - \bar w\|_{L^\infty(Q)} \Big)
\end{align}
for some constant $b_8 > 0$.  By similar arguments as proof of   (\ref{3.7}) and (\ref{3.8}) in   the proof of Lemma \ref{Lemma-stateEq.01}, we deduce that there exists $b_9 > 0$ such that
\begin{align}
    \label{5.9}
    \|\widehat y_w - \bar y\|_{L^2(Q)} \le  \|\widehat y_w - \bar y\|_{W_2^{1, 1}(0, T; D, H)} \le b_9\Big(  \|\widehat u_w - \bar u\|_{L^2(Q)} + \|w - \bar w\|_{L^\infty(Q)} \Big).
\end{align}
From (\ref{5.6.2}), (\ref{5.5.1}) and (\ref{5.9}), we get 
\begin{align}
    \label{5.10}
    \|\varphi_w - \varphi\|_{L^2(Q)} \le b_{10}\Big(  \|\widehat u_w - \bar u\|_{L^2(Q)} + \|w - \bar w\|_{L^\infty(Q)} \Big)
\end{align} for some constant $b_{10} > 0$.

Subtracting (\ref{cond.3}) from (\ref{cond.4}), we obtain 
\begin{align}
    e_w g_u[\cdot, \cdot, w] - e g_u[\cdot, \cdot] = (\varphi_w - \varphi) - (L_u[\cdot, \cdot, w] - L_u[\cdot, \cdot]),
\end{align}
which is equivalent to
\begin{align}
    \label{e.1}
    e_w - e =  \frac{1}{g_u[\cdot, \cdot, w]}\bigg[(\varphi_w - \varphi) - (L_u[\cdot, \cdot, w] - L_u[\cdot, \cdot]) - e\Big( g_u[\cdot, \cdot, w] - g_u[\cdot, \cdot] \Big) \bigg]. 
\end{align}
Hence
\begin{align}
    \label{e.2}
    &|e_w(x, t) - e(x, t)| \nonumber \\ 
    &\le  \frac{2}{\gamma_0} \bigg(|\varphi_w(x, t) - \varphi(x, t)| + |L_u[x, t, w] - L_u[x, t]| + |e(x, t)|\Big| g_u[x, t, w] - g_u[x, t] \Big| \bigg) \nonumber \\
    &\le  \frac{2}{\gamma_0} \bigg(|\varphi_w(x, t) - \varphi(x, t)| \nonumber \\ 
    &\quad + (k_{L, M} + \|e\|_{L^\infty(Q)}k_{g, M})\Big[|\widehat y_w(x, t) - \bar y(x, t) | +  |\widehat u_w(x, t) - \bar u(x, t) | + |w(x, t) - \bar w(x, t) | \Big] \bigg) 
\end{align}
for a.a. $(x, t) \in Q$. From (\ref{e.2}), (\ref{5.8}) and (\ref{5.7}), there exists $b_{11} > 0$ such that 
\begin{align}
    \label{5.11}
    \|e_w - e\|_{L^\infty(Q)} \le b_{11}\Big(  \|\widehat u_w - \bar u\|_{L^\infty(Q)} + \|w - \bar w\|_{L^\infty(Q)} \Big).
\end{align}
By (\ref{5.11}) and (\ref{5.8}), we obtain (\ref{5.00.2}). By (\ref{e.2}), (\ref{5.10}) and (\ref{5.9}), we obtain (\ref{5.00.1}). The proof of the lemma is complete.
\end{proof}

\section{Second-order sufficient conditions for the unperturbed problem}


In this section, we  give second-order sufficient optimality conditions for locally optimal solution to $(P(\bar w))$.  Recall that 
\begin{align*}
    S_F^{v, \widehat w} = \Big\{\zeta \in Y : F(\zeta, v, \widehat w) = 0 \Big\}
\end{align*}
and the Lagrange function now becomes:
\begin{align}
\label{cmm0}
 \mathcal{L}(y, u,  \varphi, e, w)   = J(y, u, w) &+ \int_Q \varphi \Big(\frac{\partial y}{\partial t}  + Ay +  f(x, t, y, w) - u - w  \Big)dxdt \nonumber \\
&+ \int_\Omega \varphi(0) [y(0) - y_0] dx + \int_Q eg(x,t, y, u, w)dxdt.
\end{align}

\begin{lemma}
    \label{6.01}
    Suppose that the assumptions of Theorem \ref{DinhLy} are satisfied. Then there exist  constants $\alpha > 0$ and $r_2>0$ such that 
    \begin{align}
    \label{Lemma4.2.1}
        D^2_{(y, u)}\mathcal{L}(\bar y, \bar u, \varphi, e, \bar w)\Big[(y - \bar y, u - \bar u), (y - \bar y, u - \bar u)  \Big] \ge \alpha \|u - \bar u\|^2_{L^2(Q)}
    \end{align}
    for all $y \in S_F^{u, \bar w}$ and $u \in B_U(\bar u, r_2)$.
\end{lemma}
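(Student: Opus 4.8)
The plan is to first prove a coercivity estimate on the exact critical cone $\mathcal{C}_2[(\bar y,\bar u),\bar w]$ and then transfer it to the perturbed directions $(y-\bar y,u-\bar u)$ by a two-norm perturbation argument. Write $\mathcal{Q}[(h,k)]:=D^2_{(y,u)}\mathcal{L}(\bar y,\bar u,\varphi,e,\bar w)[(h,k),(h,k)]$, which by the second-derivative formulas for $J$, $F$, $G$ equals
\[
\mathcal{Q}[(h,k)]=\int_Q\big(L_{yy}[x,t]+eg_{yy}[x,t]+\varphi f_{yy}[x,t]\big)h^2+2\big(L_{yu}[x,t]+eg_{yu}[x,t]\big)hk+\big(L_{uu}[x,t]+eg_{uu}[x,t]\big)k^2\,dxdt.
\]
Since $e,\varphi\in L^\infty(Q)$ and the second derivatives of $L,g,f$ are bounded on bounded sets by $(H2)$--$(H3)$, all coefficients lie in $L^\infty(Q)$, so $\mathcal{Q}$ is a bounded quadratic form on $L^2(Q)\times L^2(Q)$.

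\emph{Step 1 (coercivity on the cone).} I claim there is a constant $\alpha_0>0$ with $\mathcal{Q}[(h,k)]\ge\alpha_0\|k\|_{L^2(Q)}^2$ for all $(h,k)\in\mathcal{C}_2[(\bar y,\bar u),\bar w]$. I would prove this by contradiction and compactness. If it fails, there is a sequence $(h_n,k_n)\in\mathcal{C}_2$ with $\|k_n\|_{L^2(Q)}=1$ and $\mathcal{Q}[(h_n,k_n)]\to0$. Passing to a subsequence, $k_n\rightharpoonup k$ weakly in $L^2(Q)$; since each $h_n$ solves the linearized equation $\partial_t h_n+Ah_n+f_y[x,t]h_n=k_n$, $h_n(0)=0$, with source bounded in $L^2$, the sequence $h_n$ is bounded in $W^{1,1}_2(0,T;D,H)$, hence by the compact embedding into $L^2(Q)$ (Aubin--Lions, cf. (\ref{keyEmbed2})) $h_n\to h$ strongly in $L^2(Q)$, and $(h,k)\in\mathcal{C}_2$. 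The $h^2$ and cross terms pass to the limit by strong$\times$weak convergence, while (\ref{StrictSOSCond.01}) makes $k\mapsto\int_Q(L_{uu}+eg_{uu})k^2\,dxdt$ convex, hence weakly lower semicontinuous; therefore $\mathcal{Q}[(h,k)]\le\liminf\mathcal{Q}[(h_n,k_n)]=0$. By the strict condition (\ref{StrictSOSCond.0}) this forces $(h,k)=(0,0)$, so $h_n\to0$ in $L^2(Q)$; but then all terms containing $h_n$ vanish in the limit and $\mathcal{Q}[(h_n,k_n)]=\int_Q(L_{uu}+eg_{uu})k_n^2\,dxdt+o(1)\ge\varrho+o(1)$, contradicting $\mathcal{Q}[(h_n,k_n)]\to0$.

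\emph{Step 2 (transfer to the perturbed directions).} Fix $u\in B_U(\bar u,r_2)$ and $y\in S_F^{u,\bar w}$, and set $h=y-\bar y$, $k=u-\bar u$. Subtracting the state equations for $y$ and $\bar y$ and using the measurable mean-value selection of Lemma \ref{Lemma-stateEq.01} (formula (\ref{3.3})), $h$ solves $\partial_t h+Ah+f_y(x,t,\bar y+\theta h,\bar w)h=k$, $h(0)=0$, which differs from the linearized equation only in the coefficient. Let $\tilde h\in Y$ solve $\partial_t\tilde h+A\tilde h+f_y[x,t]\tilde h=k$, $\tilde h(0)=0$, so that $(\tilde h,k)\in\mathcal{C}_2$. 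Then $h-\tilde h$ solves a linearized equation with right-hand side $-\big(f_y(x,t,\bar y+\theta h,\bar w)-f_y[x,t]\big)h$, which by the Lipschitz bound in $(H2)$ is pointwise at most $k_{f,M}\|h\|_{L^\infty(Q)}|h|$. Combining the $L^2$-regularity estimate (\ref{3.8}) (taking $w=\bar w$) with $\|h\|_{L^2(Q)}\le C\|k\|_{L^2(Q)}$ and the $L^\infty$ bound $\|h\|_{L^\infty(Q)}\le C\|k\|_{L^\infty(Q)}\le Cr_2$ from Lemma \ref{Lemma-stateEq.01}, I would obtain $\|h-\tilde h\|_{L^2(Q)}\le Cr_2\|k\|_{L^2(Q)}$. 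Since the difference $\mathcal{Q}[(h,k)]-\mathcal{Q}[(\tilde h,k)]$ involves only terms linear in $h-\tilde h$, it is bounded by $C\|h-\tilde h\|_{L^2(Q)}\big(\|h\|_{L^2(Q)}+\|\tilde h\|_{L^2(Q)}+\|k\|_{L^2(Q)}\big)\le C'r_2\|k\|_{L^2(Q)}^2$. Using Step 1,
\[
\mathcal{Q}[(h,k)]\ge\mathcal{Q}[(\tilde h,k)]-C'r_2\|k\|_{L^2(Q)}^2\ge(\alpha_0-C'r_2)\|k\|_{L^2(Q)}^2.
\]
Choosing $r_2$ so small that $C'r_2\le\alpha_0/2$ and setting $\alpha=\alpha_0/2$ yields (\ref{Lemma4.2.1}).

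The main obstacle is Step 1: the compactness/lower-semicontinuity argument must correctly exploit (\ref{StrictSOSCond.01}) for the weak lower semicontinuity of the control term and Aubin--Lions compactness for the strong convergence of the states, and it must handle the degenerate limit $(h,k)=(0,0)$ separately through the uniform lower bound $\varrho$. Step 2 is comparatively routine but hinges on the two-norm estimate $\|h-\tilde h\|_{L^2(Q)}\le Cr_2\|k\|_{L^2(Q)}$, which is precisely where the $L^\infty$--$L^2$ gap between the control topology and the coercivity norm is absorbed.
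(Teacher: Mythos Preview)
Your proof is correct, but it is organized differently from the paper's. The paper works \emph{directly} with the normalized nonlinear difference quotients: it sets
\[
\mathcal O=\Big\{\Big(\tfrac{y-\bar y}{t},\tfrac{u-\bar u}{t}\Big):y\in S_F^{u,\bar w},\ u\in B_U(\bar u,r_2),\ u\ne\bar u,\ t=\|u-\bar u\|_{L^2(Q)}\Big\},
\]
defines $\alpha=\inf_{\mathcal O}\mathcal Q$, and then runs one compactness/lower-semicontinuity argument on a minimizing sequence $(\zeta_n,v_n)\in\mathcal O$; the key point is that, since $\zeta_n$ satisfies the perturbed linearized equation with coefficient $f_y(x,t,\bar y+\theta_n(y_n-\bar y),\bar w)$, one passes to the limit to land in $\mathcal C_2[(\bar y,\bar u),\bar w]$ and then splits into the two cases $\zeta=0$ versus $\zeta\ne0$ exactly as in your Step~1. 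In other words, the paper merges your Step~1 and Step~2 into a single limit argument, absorbing the nonlinearity into the passage to the limit rather than into a separate perturbation estimate.

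Your two-step decomposition has the virtue of making the role of $r_2$ completely explicit: the coercivity constant $\alpha_0$ on $\mathcal C_2$ is independent of $r_2$, and $r_2$ enters only through the quantitative two-norm bound $\|h-\tilde h\|_{L^2(Q)}\le C r_2\|k\|_{L^2(Q)}$ in Step~2, so the final choice $r_2\le \alpha_0/(2C')$ is transparent. The paper's single-step argument is shorter but leaves the dependence of $\alpha$ on $r_2$ (through the convergence of the coefficient $f_y(\bar y+\theta_n(y_n-\bar y),\bar w)$ to $f_y[\cdot,\cdot]$) more implicit. Both routes rely on the same three ingredients: Aubin--Lions compactness for the states, weak lower semicontinuity of the control term via \eqref{StrictSOSCond.01}, and the dichotomy driven by \eqref{StrictSOSCond.0}.
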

\begin{proof} 
    With $y \in S_F^{u, \bar w}$, it's easy to see that if $u = \bar u$ then by the uniqueness of solution of the equation (\ref{P2})-(\ref{P3}), we have $y =\bar y$, hence, in this case (\ref{Lemma4.2.1}) is satisfied for all $\alpha > 0$. 

    We now define 
    \begin{align}
        \mathcal O := \bigg\{\Big(\frac{ y - \bar y}{ t}, \frac{ u - \bar u}{ t}  \Big): \ \ y \in S_F^{u, \bar w}, \ u \in B_U(\bar u, r_2), \      u \ne \bar u  \  {\rm and} \  t := \| u - \bar u\|_{L^2(Q)}   \bigg\}
    \end{align}
and   
\begin{align}
    \alpha := \mathop {\rm inf}\limits_{(\zeta, v) \in \mathcal{O}}D^2_{(y, u)}\mathcal{L}(\bar y, \bar u, \varphi, e, \bar w)[(\zeta, v), (\zeta, v)].
\end{align}
To prove (\ref{Lemma4.2.1}), it's sufficient to show that $\alpha > 0$. We do this as follows. By definition of infimum, there exists a sequence $(\zeta_n, v_n) \in \mathcal{O}$ with $\zeta_n= \frac{y_n-\bar y}{t_n}$ and $v_n= \frac{u_n-\bar u}{t_n}$, $y_n \in S_F^{u_n, \bar w}$, $u_n \in B_U(\bar u, r_2)$, $u_n \ne \bar u$,  $t_n=\|u_n- \bar u\|_{L^2(Q)}$, $n = 1, 2, ...$,  such that
\begin{align}
\label{StrictSOSCond.02}
    \alpha =  \mathop{\rm lim}\limits_{n \to \infty}D^2_{(y, u)}\mathcal{L}(\bar y, \bar u, \varphi, e, \bar w)[(\zeta_n, v_n), (\zeta_n, v_n)].
\end{align}
Since $\|v_n \|_{L^2(Q)}=1$, $\{v_n\}$ is bounded in $L^2(Q)$. By the reflexivity of  $L^2(Q)$, we may assume that $v_n \rightharpoonup v$ in $L^2(Q)$.  We want to show that $\zeta_n$ converges weakly to some $\zeta$ in $W^{1,2}(0, T; D, H)$. Since $y_n\in S_F^{u_n, \bar w}$ and $\bar y\in S_F^{\bar u, \bar w}$, we have $F(y_n, u_n, \bar w) = F(\bar y, \bar u, \bar w) = 0$, that is, 
\begin{align*}
&\frac{\partial y_n}{\partial t}+ Ay_n +f(x, t, y_n, \bar w) = u_n + \bar w,\quad y_n(0)=y_0\\
&\frac{\partial \bar y}{\partial t} +A\bar y + f(x, t, \bar y, \bar w)=\bar u + \bar w,\quad \bar y(0)= y_0.
\end{align*} 
This implies that
\begin{align}
\label{Zn}
     \frac{\partial (y_n-\bar y)}{\partial t}+ A(y_n-\bar y) +f(x, t,  y_n, \bar w)-f(x, t, \bar y, \bar w) = u_n-\bar u, \quad  (y_n-\bar y)(0)=0. 
\end{align}
 By a Taylor's expansion of function $y \mapsto f(x, t, y, \bar w)$, there exist measurable functions $\theta_n$ such that 
 \begin{align}
 \label{Zn.1}
f(x, t, y_n, \bar w) - f(x, t, \bar y, \bar w) = f_y\Big(x, t, \bar y +\theta_n(y_n-\bar y), \bar w \Big) (y_n-\bar y),\quad  0\leq\theta_n(x, t)\leq 1. 
\end{align}
 From \eqref{Zn} and (\ref{Zn.1}), we get
\begin{align}
\label{Zn2}
     \frac{\partial \zeta_n}{\partial t}+ A\zeta_n  + f_y\Big(x, t, \bar y+\theta_n (y_n-\bar y), \bar w\Big)\zeta_n = v_n, \quad \zeta_n(0)=0.
\end{align}
Also, 
\begin{align*}
    |u_n(x, t)| \le |u_n(x, t) - \bar u(x, t)| + |\bar u(x, t)| \le \|u_n - \bar u\|_U + \| \bar u\|_U =  \| \bar u\|_U +r_2
\end{align*}
for a.a. $(x, t) \in Q$. By $(i)$ of Lemma \ref{Lemma-stateEq}, there exist $c_1 > 0$ such that 
\begin{align}
    \|y_n\|_{L^\infty(Q)} \le c_1 (\|u_n\|_U + \|\bar w\|_W + \|y_0\|_{L^\infty(\Omega)}) \le c_1 (\|\bar u\|_U + \|\bar w\|_W + r_2   + \|y_0\|_{L^\infty(\Omega)}).
\end{align}
Hence, 
\begin{align*}
    |\bar y(x, t) + \theta_n(x, t) (y_n(x, t)-\bar y(x, t))| &\le \|\bar y+\theta_n (y_n-\bar y)\|_{L^\infty(Q)} \le 2 \|\bar y\|_{L^\infty(Q)} + \|y_n\|_{L^\infty(Q)} \\
    &\le 2 \|\bar y\|_{L^\infty(Q)} + c_1 (\|\bar u\|_U + \|\bar w\|_W + r_2   + \|y_0\|_{L^\infty(\Omega)})
\end{align*}
for a.a. $(x, t)\in Q$.  By assumption $(H2)$, we have 
\begin{align*}
    &\Big|f_y\Big(x, t, \bar y(x, t) + \theta_n(x, t) (y_n(x, t)-\bar y(x, t)), \bar w(x, t)\Big)\Big| \\ 
    &\le  k_{fM}\Big(|\bar y(x, t) + \theta_n(x, t) (y_n(x, t)-\bar y(x, t))| +  |\bar w(x, t)| \Big) + |f_y(x, t, 0, 0)| \\
    &\le  k_{fM}\Big(2 \|\bar y\|_{L^\infty(Q)} + c_1 (\|\bar u\|_U + \|\bar w\|_W + r_2   + \|y_0\|_{L^\infty(\Omega)}) + \|\bar w\|_W  \Big) + \|f_y(\cdot, \cdot, 0, 0)\|_{L^\infty(Q)}
\end{align*}
for a.a. $(x, t)\in Q$. Hence $f_y\Big(x, t, \bar y+\theta_n (y_n-\bar y), \bar w\Big) \in L^\infty(Q)$. From this, applying regularity result from \cite[Theorem 5, p.360]{Evan} to the parabolic equation (\ref{Zn2}), there exists a constant $C>0$ such that 
\begin{align}
\label{cmm.000}
\|\zeta_n\|_{W^{1,2}(0,T; D, H)}\leq  C\|v_n\|_{L^2(Q)}=C.
\end{align}
This implies that  $\{\zeta_n\}$ is bounded in $W^{1,2}(0,T; D, H)$. Without loss of generality, we may assume that $\zeta_n\rightharpoonup \zeta$ in $W^{1,2}(0,T, D, H)$. By Aubin's lemma, the embedding $W^{1,2}(0, T; D, H)\hookrightarrow L^2(0, T; V)$ is compact. Therefore, we may assume that $\zeta_n\to \zeta$ in $L^2(0, T; V)$. Particularly, $\zeta_n\to \zeta$ in $L^2(Q)$. We now  use the procedure in the proof of \cite[Theorem 3, p. 356]{Evan}. By  passing to the limit,   we obtain from  \eqref{Zn2} that
\begin{align}\label{Z-Eq-C2}
     \frac{\partial \zeta}{\partial t}+ A\zeta +f'(\bar y)\zeta = v,\ \zeta(0)=0.
\end{align}
 Therefore, we get $(\zeta, v)\in\mathcal{C}_2[(\bar y, \bar u), \bar w]$.  We now consider two cases.

 \noindent $\bullet$ \textit{Case 1: $\zeta = 0$.} \  \ Then we have from (\ref{StrictSOSCond.02}), (\ref{StrictSOSCond.0}), (\ref{StrictSOSCond.01}) and $\|v_n \|_{L^2(Q)}=1$ that 
 \begin{align*}
     \alpha &=  \mathop{\rm lim}\limits_{n \to \infty}D^2_{(y, u)}\mathcal{L}(\bar y, \bar u, \varphi, e, \bar w)[(\zeta_n, v_n), (\zeta_n, v_n)] \\
     &=  \mathop{\rm lim}\limits_{n \to \infty} \bigg\{\int_Q\Big[L_{yy}[x, t]\zeta_n^2 + 2L_{yu}[x, t]\zeta_nv_n  + e(g_{yy}[x, t]\zeta_n^2 + 2g_{yu}[x, t]\zeta_nv_n) + \varphi f_{yy}[x, t] \zeta_n^2\Big]dxdt  \bigg\} \\
     &+ \mathop{\rm lim}\limits_{n \to \infty} \int_Q\Big(L_{uu}[x, t] + e(x, t)g_{uu}[x, t]\Big)v_n^2dxdt  \\
     &\ge 0 + \varrho = \varrho > 0.
 \end{align*}

 \noindent $\bullet$ \textit{Case 2: $\zeta \ne 0$.} \  \ Then we have from the condition (\ref{StrictSOSCond.0}) that 
  \begin{align*}
     \alpha &=  \mathop{\rm lim}\limits_{n \to \infty}D^2_{(y, u)}\mathcal{L}(\bar y, \bar u, \varphi, e, \bar w)[(\zeta_n, v_n), (\zeta_n, v_n)] \\
     &=\int_Q(L_{yy}[x, t]\zeta^2 + 2L_{yu}[x, t]\zeta v  + L_{uu}[x, t]v^2)dxdt  \\
    &+ \int_Q e(x, t)\Big( g_{yy}[x, t]\zeta^2 + 2g_{yu}[x, t]\zeta v  + g_{uu}[x, t]v^2 \Big) dxdt  + \int_Q \varphi f_{yy}[x, t] \zeta^2 dxdt > 0.
 \end{align*}
 This completes the proof of Lemma.
\end{proof}

\begin{lemma}
    \label{6.02}   
    Under the assumptions of Lemma \ref{6.01}, there exist positive numbers $r'_2, \beta', \beta'' > 0$ such that the following holds: if $\varphi' \in B_{ L^\infty(Q)}(\varphi, \beta')$, $e' \in B_{L^\infty(Q)}(e, \beta'')$,  $y \in S_F^{u, \bar w}$ with $u \in B_U(\bar u, r'_2)$ then one has 
    \begin{align}
    \label{602.0}
        D^2_{(y, u)}\mathcal{L}(y, u,  \varphi',  e', \bar w)\Big[(y - \bar y, u - \bar u), (y - \bar y, u - \bar u)  \Big] \ge  \frac{\alpha}{2} \|u - \bar u\|^2_{L^2(Q)},
    \end{align}
    where $\alpha$ is given in  Lemma \ref{6.01}.
\end{lemma}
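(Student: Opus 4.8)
The plan is to deduce \eqref{602.0} from Lemma~\ref{6.01} by showing that the quadratic form depends continuously on the evaluation point $(y,u)$ and on the multipliers $(\varphi',e')$. Write $\delta y := y - \bar y$ and $\delta u := u - \bar u$, and put $Q_1 := D^2_{(y,u)}\mathcal{L}(y,u,\varphi',e',\bar w)[(\delta y,\delta u),(\delta y,\delta u)]$ and $Q_2 := D^2_{(y,u)}\mathcal{L}(\bar y,\bar u,\varphi,e,\bar w)[(\delta y,\delta u),(\delta y,\delta u)]$. Since $y\in S_F^{u,\bar w}$ and $u\in B_U(\bar u,r'_2)$ with $r'_2\le r_2$, Lemma~\ref{6.01} gives $Q_2\ge \alpha\|\delta u\|_{L^2(Q)}^2$. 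Hence \eqref{602.0} follows once I prove the continuity estimate $|Q_1-Q_2|\le \tfrac{\alpha}{2}\|\delta u\|_{L^2(Q)}^2$, because then $Q_1\ge Q_2-|Q_1-Q_2|\ge \tfrac{\alpha}{2}\|\delta u\|_{L^2(Q)}^2$.

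First I would record two bounds for $\delta y$. Applying Lemma~\ref{Lemma-stateEq.01} with $\widehat u=\bar u$ and $\widehat w=w=\bar w$ furnishes a constant $C>0$ with $\|\delta y\|_{L^\infty(Q)}\le C\|\delta u\|_{L^\infty(Q)}\le Cr'_2$; repeating the linearization and Taylor-expansion argument that produced \eqref{5.9}, now with $w=\bar w$ so that the parameter term drops out, gives $\|\delta y\|_{L^2(Q)}\le C\|\delta u\|_{L^2(Q)}$. Having both a uniform smallness bound and an $L^2$-scaling bound for $\delta y$ is precisely the two-norm device needed here. I also fix the uniform bound $M:=\|\bar y\|_{L^\infty(Q)}+\|\bar u\|_{L^\infty(Q)}+\|\bar w\|_{L^\infty(Q)}+(C+1)r'_2$, so that $|y|,|u|,|\bar y|,|\bar u|,|\bar w|\le M$ a.e.\ and the Lipschitz constants $k_{L,M},k_{g,M},k_{f,M}$ from $(H2)$--$(H3)$ stay under control for all $r'_2\le r_2$.

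Next I would expand $Q_1-Q_2$ as a finite sum of integrals, grouped into the contributions coming from $L$, from $e\,g$, and from $\varphi\,f$. Every summand has the form $\int_Q(\text{coefficient})\,\delta y^2$, $\int_Q(\text{coefficient})\,\delta y\,\delta u$, or $\int_Q(\text{coefficient})\,\delta u^2$, and the coefficients are of two types. Type A coefficients are differences of the second derivatives evaluated at $(y,u,\bar w)$ and at $(\bar y,\bar u,\bar w)$ (such as $L_{uu}(\cdot,y,u,\bar w)-L_{uu}(\cdot,\bar y,\bar u,\bar w)$ and the analogous $g$- and $f$-terms); by $(H2)$--$(H3)$ these are dominated pointwise by a constant multiple of $(|\delta y|+|\delta u|)$, and after multiplication by $\|e'\|_{L^\infty(Q)}\le\|e\|_{L^\infty(Q)}+\beta''$ or $\|\varphi'\|_{L^\infty(Q)}\le\|\varphi\|_{L^\infty(Q)}+\beta'$ where relevant, they remain dominated by a fixed constant times $(|\delta y|+|\delta u|)$. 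Type B coefficients are the genuine multiplier differences $(e'-e)$ and $(\varphi'-\varphi)$ multiplying bounded second derivatives, hence dominated pointwise by $\beta'' k_{g,M}$ and $\beta' k_{f,M}$ respectively. Estimating each integral by H\"older, a Type A term yields an $L^\infty$ factor $\le(C+1)r'_2$ and a remaining product bounded by $(\max\{C,1\})^2\|\delta u\|_{L^2(Q)}^2$ via the $L^2$ state estimate, so the Type A part is $\le C_A(r'_2)\|\delta u\|_{L^2(Q)}^2$ with $C_A(r'_2)\to0$ as $r'_2\to0$; a Type B term yields an $L^\infty$ factor $\beta'$ or $\beta''$ and the same $\|\delta u\|_{L^2(Q)}^2$ bound, so the Type B part is $\le C_B(\beta'+\beta'')\|\delta u\|_{L^2(Q)}^2$. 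Thus $|Q_1-Q_2|\le[C_A(r'_2)+C_B(\beta'+\beta'')]\|\delta u\|_{L^2(Q)}^2$, and I would finally shrink $r'_2\le r_2$ and $\beta',\beta''$ so that the bracket does not exceed $\alpha/2$.

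The main obstacle is the homogeneity bookkeeping imposed by the two-norm structure: the coercivity of Lemma~\ref{6.01} is measured in $\|\delta u\|_{L^2(Q)}$, so every error term, in particular the cross terms $\delta y\,\delta u$ and the pure-state terms $\delta y^2$, must be reabsorbed into $\|\delta u\|_{L^2(Q)}^2$. This succeeds only because $\delta y$ satisfies the linear $L^2$-bound $\|\delta y\|_{L^2(Q)}\le C\|\delta u\|_{L^2(Q)}$, while simultaneously the $L^\infty$-smallness $\|\delta y\|_{L^\infty(Q)}\le Cr'_2$ is what makes the Type A coefficient differences uniformly small; keeping these two roles of $\delta y$ apart, and noting that the $f_{yy}$-coefficient difference is controlled by $|\delta y|$ alone since $w=\bar w$ here, is the delicate point.
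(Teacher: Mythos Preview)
Your proposal is correct and follows essentially the same approach as the paper: both start from the coercivity of Lemma~\ref{6.01}, bound the difference $Q_1-Q_2$ by splitting it into the $L$-, $f$-, and $g$-contributions, and control each piece using the Lipschitz estimates from $(H2)$--$(H3)$ together with the two-norm device (the $L^2$ state bound $\|\delta y\|_{L^2}\le C\|\delta u\|_{L^2}$ for the quadratic factors, and the $L^\infty$ smallness $\|\delta y\|_{L^\infty}\le Cr'_2$ for the coefficient differences). The only cosmetic difference is that the paper organizes the error terms by function ($P_1,P_2,P_3$ for $L,f,g$) whereas you organize them by whether the perturbation is in the evaluation point (Type~A) or in the multiplier (Type~B); these are the same terms regrouped.
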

\begin{proof}  
    For $r_2 > 0$, let $h := y  - \bar y$, $v := u - \bar u$ with $u \in B_U(\bar u, r_2)$ and $y \in S_F^{u, \bar w}$.   According to Lemma \ref{6.01}, there exists a positive constant $\alpha > 0$ such that 
    \begin{align}
    \label{602.1}
        D^2_{(y, u)}\mathcal{L}(\bar y, \bar u, \varphi, e, \bar w)[(h, v), (h, v)] \ge \alpha \|v\|^2_{L^2(Q)}.
    \end{align}
    We set 
    \begin{align*}
        \|\cdot\|_{\varphi} := \|\cdot\|_{ L^\infty(Q)} \quad {\rm and} \quad  \|\cdot\|_{e} := \|\cdot\|_{L^\infty(Q)}.
    \end{align*}
    With $(\varphi', e') \in W_2^{1,1}(0, T; D, H) \cap L^\infty(Q) \times L^\infty(Q)$, we have 
    \begin{align}
    \label{602.2}
        &\Big |D^2_{(y, u)}\mathcal{L}(y, u, \varphi', e', \bar w)[(h, v), (h, v)]  -  D^2_{(y, u)}\mathcal{L}(\bar y, \bar u, \varphi, e, \bar w)[(h, v), (h, v)]  \Big| \nonumber \\
        &\le \Big| D^2_{(y, u)}J(y, u, \bar w)(h, v)^2 -  D^2_{(y, u)}J(\bar y, \bar u, \bar w)(h, v)^2 \Big| \nonumber \\
        &+ \Big| \int_Q[\varphi' f_{yy}(x, t, y, \bar w)h^2  - \varphi f_{yy}(x, t, \bar y, \bar w)h^2 ] dxdt\Big| \nonumber \\ 
        &+ \Big| \int_Q[e'D^2_{(y, u)}g(y, u, \bar w)(h, v)^2 -  eD^2_{(y, u)}g(\bar y, \bar u, \bar w)(h, v)^2] dxdt\Big|  =: P_1 + P_2 + P_3. 
    \end{align}
    Since $u \in B_U(\bar u, r_2)$, 
    \begin{align}
        |u(x, t)| \le |u(x, t) - \bar u(x, t)| + |\bar u(x, t)| \le \|u - \bar u\|_U + \|\bar u\|_U = \|\bar u\|_U + r_2
    \end{align}
    for a.a. $(x, t) \in Q$. Since $y \in S_F^{u, \bar w}$, $F(y, u, \bar w) = 0$. By  $(i)$ of Lemma \ref{Lemma-stateEq}, there exists a constant $c_1 > 0$ such that 
    \begin{align}
        \|y\|_{L^\infty(Q)} \le c_1(\| u\|_U + \|\bar w\|_W + |y_0\|_{L^\infty(\Omega)}) \le c_1(\|\bar u\|_U + r_2 + \|\bar w\|_W + |y_0\|_{L^\infty(\Omega)})
    \end{align}
    for a.a. $(x, t) \in Q$. Since $F(\bar y, \bar u, \bar w) = F(y, u, \bar w) = 0$, by using similar arguments as in the procedure in the proof of (\ref{cmm.000}), we obtain
    \begin{align}
        \label{602.3}
        \|h\|_{L^2(Q)} \le \|h\|_{W_2^{1,1}(0, T; D, H)} \le c_2\|v\|_{L^2(Q)}
    \end{align}
    for some constant $c_2 > 0$.      By assumption $(H3)$, we have 
    \begin{align}
        \label{602.4}
        P_1  &\le \Big|\int_Q[L_{yy}(x, t, y, u, \bar w) - L_{yy}(x, t, \bar y, \bar u, \bar w)]h^2 dxdt\Big| \nonumber \\ 
        &+ 2\Big|\int_Q[L_{yu}(x, t, y, u, \bar w) - L_{yu}(x, t, \bar y, \bar u, \bar w)]hv dxdt\Big| \nonumber \\
        &+ \Big|\int_Q[L_{uu}(x, t, y, u, \bar w) - L_{uu}(x, t, \bar y, \bar u, \bar w)]v^2 dxdt\Big| \nonumber \\
        &\le (c_2 + 1)^2k_{L, M}(\|y - \bar y\|_Y + \|u - \bar u\|_U)\|v\|^2_{L^2(Q)}
    \end{align}
    Moreover, from Lemma \ref{Lemma-stateEq.01}, there exists a constant $c_3 > 0$ such that 
\begin{align}
    \label{602.5}
    \|y - \bar y\|_Y  \le c_3 \|u - \bar u\|_U.
\end{align}
From (\ref{602.4}) and (\ref{602.5}), we get 
\begin{align}
    \label{602.6}
    P_1  \le (c_3 + 1)(c_2 + 1)^2k_{L, M}\|u - \bar u\|_U\|v\|^2_{L^2(Q)}.
\end{align}
For $P_2$, we do this as follows
\begin{align}
    \label{602.7}
    P_2 &\le \int_Q|\varphi' - \varphi| | f_{yy}(x, t,  y, \bar w) |h^2 dxdt  + \int_Q|\varphi| |f_{yy}(x, t, y, \bar w)  - f_{yy}(x, t, \bar y, \bar w) |h^2 dxdt  \nonumber \\
    &\le c_2^2(k_{f, M}\|\varphi' - \varphi\|_{\varphi} + \|\varphi\|_{\varphi}c_3 k_{f, M}\|u - \bar u\|_U)\|v\|^2_{L^2(Q)}.
\end{align}
We evaluate  $P_3$
\begin{align}
        \label{602.8}
        P_3  &\le \Big|\int_Q[e'g_{yy}(x, t, y, u, \bar w) - eg_{yy}(x, t, \bar y, \bar u, \bar w)]h^2 dxdt\Big| \nonumber \\ 
        &+ 2\Big|\int_Q[e'g_{yu}(x, t, y, u, \bar w) - eg_{yu}(x, t, \bar y, \bar u, \bar w)]hv dxdt\Big| \nonumber \\
        &+ \Big|\int_Q[e'g_{uu}(x, t, y, u, \bar w) - e g_{uu}(x, t, \bar y, \bar u, \bar w)]v^2 dxdt\Big| =: S_1 + S_2 + S_3.
    \end{align}
And 
\begin{align}
    \label{602.9}
    S_1 &\le \int_Q|e' - e| | g_{yy}(x, t,  y, u,  \bar w) |h^2 dxdt  + \int_Q|e| |g_{yy}(x, t, y, u,  \bar w)  - g_{yy}(x, t, \bar y, \bar u,  \bar w) |h^2 dxdt  \nonumber \\
    &\le c_2^2(k_{g, M}\|e' - e\|_e + \|e\|_e(c_3+ 1) k_{g, M}\|u - \bar u\|_U)\|v\|^2_{L^2(Q)}.
\end{align}
By similar arguments
\begin{align}
    \label{602.10}
    S_2 &\le \int_Q|e' - e| | g_{yu}(x, t,  y, u,  \bar w) |hv dxdt  + \int_Q|e| |g_{yu}(x, t, y, u,  \bar w)  - g_{yu}(x, t, \bar y, \bar u,  \bar w) |hv dxdt  \nonumber \\
    &\le 2c_2(k_{g, M}\|e' - e\|_e + \|e\|_e(c_3+ 1) k_{g, M}\|u - \bar u\|_U)\|v\|^2_{L^2(Q)},
\end{align}
and
\begin{align}
    \label{602.11}
    S_3 &\le \int_Q|e' - e| | g_{uu}(x, t,  y, u,  \bar w) |v^2 dxdt  + \int_Q|e| |g_{uu}(x, t, y, u,  \bar w)  - g_{uu}(x, t, \bar y, \bar u,  \bar w) |v^2 dxdt  \nonumber \\
    &\le (k_{g, M}\|e' - e\|_e + \|e\|_e(c_3+ 1) k_{g, M}\|u - \bar u\|_U)\|v\|^2_{L^2(Q)}.
\end{align}
From (\ref{602.8}), (\ref{602.9}), (\ref{602.10}) and (\ref{602.11}), we obtain
\begin{align}
    \label{602.12}
    P_3 \le (c_2+1)^2(k_{g, M}\|e' - e\|_e + \|e\|_e(c_3+ 1) k_{g, M}\|u - \bar u\|_U)\|v\|^2_{L^2(Q)}.
\end{align}
From (\ref{602.2}), (\ref{602.6}), (\ref{602.7}) and (\ref{602.12}), we obtain
   \begin{align}
    \label{602.13}
        &\Big |D^2_{(y, u)}\mathcal{L}(y, u, \varphi', e', \bar w)[(h, v), (h, v)]  -  D^2_{(y, u)}\mathcal{L}(\bar y, \bar u, \varphi, e, \bar w)[(h, v), (h, v)]  \Big| \nonumber \\
        &\le (c_3 + 1)(c_2 + 1)^2k_{L, M}\|u - \bar u\|_U\|v\|^2_{L^2(Q)} \nonumber \\
        &+ c_2^2(k_{f, M}\|\varphi' - \varphi\|_{\varphi} + \|\varphi\|_{\varphi}c_3 k_{f, M}\|u - \bar u\|_U)\|v\|^2_{L^2(Q)} \nonumber \\ 
        &+ (c_2+1)^2(k_{g, M}\|e' - e\|_e + \|e\|_e(c_3+ 1) k_{g, M}\|u - \bar u\|_U)\|v\|^2_{L^2(Q)}.
    \end{align}
We now choose positive numbers $r'_2, \beta', \beta'' > 0$ such that  if $\varphi' \in B_{ L^\infty(Q)}(\varphi, \beta')$, $e' \in B_{L^\infty(Q)}(e, \beta'')$ and $u \in B_U(\bar u, r'_2)$ then
\begin{align}
    (c_3 + 1)(c_2 + 1)^2k_{L, M}\|u &- \bar u\|_U + c_2^2(k_{f, M}\|\varphi' - \varphi\|_{\varphi} + \|\varphi\|_{\varphi}c_3 k_{f, M}\|u - \bar u\|_U) \nonumber \\
    &+ (c_2+1)^2(k_{g, M}\|e' - e\|_e + \|e\|_e(c_3+ 1) k_{g, M}\|u - \bar u\|_U) \le \frac{\alpha}{2}.
\end{align}
Then, we have 
\begin{align}
    \label{602.14}
        &\Big |D^2_{(y, u)}\mathcal{L}(y, u, \varphi', e', \bar w)[(h, v), (h, v)]  -  D^2_{(y, u)}\mathcal{L}(\bar y, \bar u, \varphi, e, \bar w)[(h, v), (h, v)]  \Big| \le \frac{\alpha}{2}\|v\|^2_{L^2(Q)}.
    \end{align}
    Combining (\ref{602.1}) with (\ref{602.14}), we get (\ref{602.0}). This completes the proof of Lemma. 
    \end{proof}

\medskip

By a similar way as the proof of Lemma \ref{6.02}, it is easy to show that the conclusion of Lemma \ref{6.02} is still true if in the right-hand side of (\ref{602.0}), $y$ is replaced by $y'$ and $u$ is replaced by $u'$, where $(y', u') \in B_Y(\bar y, \beta''') \times B_U(\bar u, r'_2)$ with $\beta'''$ and $ r'_2$ small enough. Namely, we have:
    
\begin{corollary}
    \label{6.02.1}
    Under the assumptions of Lemma \ref{6.02},    there exist positive numbers $r'_2, \beta', \beta'', \beta''' > 0$ such that the following holds: if $\varphi' \in B_{ L^\infty(Q)}(\varphi, \beta')$, $e' \in B_{L^\infty(Q)}(e, \beta'')$,  $y \in S_F^{u, \bar w}$ with $u \in B_U(\bar u, r'_2)$ then one has 
    \begin{align}
    \label{602.00}
        D^2_{(y, u)}\mathcal{L}(y', u',  \varphi',  e', \bar w)\Big[(y - \bar y, u - \bar u), (y - \bar y, u - \bar u)  \Big] \ge  \frac{\alpha}{2} \|u - \bar u\|^2_{L^2(Q)},
    \end{align}
    for all $(y', u') \in B_Y(\bar y, \beta''') \times B_U(\bar u, r'_2)$.
\end{corollary}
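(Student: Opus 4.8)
The plan is to repeat the argument of Lemma~\ref{6.02} almost verbatim, exploiting the fact that decoupling the evaluation point $(y', u')$ from the direction $(y - \bar y, u - \bar u)$ affects only the second-order kernels and not the bound on the direction itself. Accordingly, I fix $h := y - \bar y$ and $v := u - \bar u$ with $y \in S_F^{u, \bar w}$ and $u \in B_U(\bar u, r'_2)$, and I first record, exactly as in the derivation of \eqref{602.3}, that subtracting the two state equations together with the regularity estimate from \cite[Theorem 5, p.~360]{Evan} yields a constant $c_2 > 0$ with
\[
\|h\|_{L^2(Q)} \le \|h\|_{W^{1,1}_2(0, T; D, H)} \le c_2 \|v\|_{L^2(Q)}.
\]
This estimate depends only on the pair $(y, u)$ through the state equation and is entirely unaffected by the choice of $(y', u')$; it is the link that will convert every integral $\int_Q h^2$, $\int_Q hv$, $\int_Q v^2$ into a multiple of $\|v\|^2_{L^2(Q)}$.

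Next I would invoke Lemma~\ref{6.01} to obtain $D^2_{(y,u)}\mathcal{L}(\bar y, \bar u, \varphi, e, \bar w)[(h,v),(h,v)] \ge \alpha \|v\|^2_{L^2(Q)}$, and then estimate the difference
\[
\Big|D^2_{(y,u)}\mathcal{L}(y', u', \varphi', e', \bar w)[(h,v),(h,v)] - D^2_{(y,u)}\mathcal{L}(\bar y, \bar u, \varphi, e, \bar w)[(h,v),(h,v)]\Big|
\]
by the same triple splitting $P_1 + P_2 + P_3$ used in \eqref{602.2}, corresponding to the $J$-part, the $\varphi f_{yy}$-part, and the $e\,g$-part. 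In each $P_i$ the local Lipschitz property of $(H3)$ now produces increments of the second-order kernels measured by $\|y' - \bar y\|_Y + \|u' - \bar u\|_U$ (the $\bar w$-slot being fixed in both evaluation points, the $w$-increment drops out), while the perturbation of the multipliers enters through $\|\varphi' - \varphi\|_{L^\infty(Q)}$ and $\|e' - e\|_{L^\infty(Q)}$ exactly as in \eqref{602.7} and \eqref{602.12}.

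The sole structural change from Lemma~\ref{6.02} is the following: there the factor $\|y - \bar y\|_Y$ was controlled by $c_3\|u - \bar u\|_U$ through Lemma~\ref{Lemma-stateEq.01}, because $y$ was the state associated with $u$; here $(y', u')$ carries no such coupling, so I simply bound $\|y' - \bar y\|_Y < \beta'''$ and $\|u' - \bar u\|_U < r'_2$ directly by the radii of the balls $B_Y(\bar y, \beta''')$ and $B_U(\bar u, r'_2)$. This is precisely why an independent radius $\beta'''$ must be introduced. Collecting the $P_i$ bounds then gives a difference of the form $\big[C_L(\beta''' + r'_2) + C_f(\beta' + \beta''') + C_g(\beta'' + \beta''' + r'_2)\big]\|v\|^2_{L^2(Q)}$ for constants depending only on $c_2$, the Lipschitz bounds $k_{\bullet,M}$, and $\|\varphi\|_{L^\infty(Q)}, \|e\|_{L^\infty(Q)}$.

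Finally I would shrink $r'_2, \beta', \beta'', \beta''' > 0$ so that this aggregate coefficient does not exceed $\alpha/2$, which forces the difference to be at most $\tfrac{\alpha}{2}\|v\|^2_{L^2(Q)}$; combined with the lower bound from Lemma~\ref{6.01} this yields \eqref{602.00}. There is no genuine obstacle in this argument — the only point requiring care is to verify that the bound $\|h\|_{L^2(Q)} \le c_2\|v\|_{L^2(Q)}$, which anchors the whole estimate to $\|v\|^2$, is indeed insensitive to the decoupled evaluation point $(y', u')$, so that all three terms $P_1, P_2, P_3$ can simultaneously be made small by a single choice of the four radii.
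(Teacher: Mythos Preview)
Your proposal is correct and follows precisely the route the paper indicates: repeat the proof of Lemma~\ref{6.02} with the evaluation point $(y',u')$ decoupled from the direction $(h,v)$, replacing the bound $\|y-\bar y\|_Y\le c_3\|u-\bar u\|_U$ by the direct radius bound $\|y'-\bar y\|_Y<\beta'''$. The paper itself gives no further details beyond ``by a similar way,'' so your write-up is in fact more explicit than the original.
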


\begin{proposition}
    \label{6.03}
    Under  assumptions of Theorem \ref{DinhLy}, $\bar z = (\bar y, \bar u)$  is a locally strongly optimal solution of the problem $(P(\bar w))$ in the sense of (\ref{strong.solution}). 
\end{proposition}
\begin{proof}
    Take any $(y, u) \in \Phi(\bar w)$. We then notice that
    \begin{align*}
        F(y, u, \bar w) = F(\bar y, \bar u, \bar w) = 0; \quad e(x, t)g(x, t, \bar y, \bar u, \bar w) = 0, \  e(x, t)g(x, t,  y,  u, \bar w) \le 0  \  {\rm a.a.} \ (x, t) \in Q,
    \end{align*}
    where $(\varphi, e) \in \Lambda_\infty[(\bar y, \bar u), \bar w]$. Hence 
    \begin{align}
        \label{602.15}
        \mathcal{L}(y, u, \varphi, e, \bar w) - \mathcal{L}(\bar y, \bar u, \varphi, e, \bar w) &= J(y, u, \bar w) - J(\bar y, \bar u, \bar w) + \int_Qe(x, t)g(x, t,  y,  u, \bar w) dxdt \nonumber \\ 
        &\le  J(y, u, \bar w) - J(\bar y, \bar u, \bar w).
    \end{align}
    Moreover, by a Taylor expansion of function $z \mapsto \mathcal{L}(z, \varphi, e, \bar w)$ around $\bar z = (\bar y, \bar u)$, there exists a measurable function $z' = \bar z + \xi(z- \bar z)$, $0 \le \xi(x, t) \le 1$, such that 
    \begin{align}
        \label{602.16}
        \mathcal{L}(y, u, \varphi, e, \bar w) &- \mathcal{L}(\bar y, \bar u, \varphi, e, \bar w) \nonumber \\ 
        &=  D_{(y, u)}\mathcal{L}(\bar z, \varphi, e, \bar w)(z - \bar z) + \frac{1}{2}D^2_{(y, u)}\mathcal{L}(z', \varphi, e, \bar w)(z - \bar z)^2 \nonumber \\
        &= \frac{1}{2}D^2_{(y, u)}\mathcal{L}(z', \varphi, e, \bar w)(z - \bar z)^2.
    \end{align}
    By  Corollary  \ref{6.02.1}, there exist numbers $\epsilon>0$, $\kappa > 0$ and a  constant $\alpha_1 > 0$ such that 
    \begin{align}
        \label{602.17}
        D^2_{(y, u)}\mathcal{L}(z', \varphi, e, \bar w)(z - \bar z)^2 \ge \alpha_1 \|u - \bar u\|^2_{L^2(Q)},
    \end{align}
    for all $z =  (y, u)\in \Phi(\bar w) \cap[B_Y(\bar y, \epsilon)\times B_U(\bar u, \epsilon)]$. Combining (\ref{602.15}), (\ref{602.16}) with (\ref{602.17}), we obtain 
    \begin{align}
        J(y, u, \bar w) - J(\bar y, \bar u, \bar w) \ge \frac{\alpha_1}{2}\|u - \bar u\|^2_{L^2(Q)},
    \end{align}
    for all $ (y, u)\in \Phi(\bar w) \cap[B_Y(\bar y, \epsilon)\times B_U(\bar u, \epsilon)]$. Thus $\bar z$  is a locally strongly optimal solution of the problem $(P(\bar w))$. The lemma is proved. 
\end{proof}

\section{Proof of the main result}

Before proving Theorem \ref{DinhLy}, we need  the following lemmas. 

\begin{lemma}
\label{cmm1}
    Let the assumptions of Theorem \ref{DinhLy} be satisfied. Then
    \begin{align}
    \label{cmm2}
    \mathcal L (\bar y, \bar u, \varphi, e, \bar w) \ge \mathcal L (\bar y, \bar u, \varphi_w, e_w, \bar w),
\end{align}
where $(\varphi_w, e_w) \in  \Lambda_\infty [(\widehat y_w, \widehat u_w), w]$.
\end{lemma}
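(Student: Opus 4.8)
The plan is to exploit the feasibility of $(\bar y, \bar u)$ for $(P(\bar w))$ together with the complementary slackness conditions attached to the two multiplier pairs. Since $\bar z = (\bar y, \bar u) \in \Phi(\bar w)$, the state equation $F(\bar y, \bar u, \bar w) = 0$ holds; that is, $\frac{\partial \bar y}{\partial t} + A\bar y + f(x, t, \bar y, \bar w) - \bar u - \bar w = 0$ a.e. in $Q$ and $\bar y(0) - y_0 = 0$ in $\Omega$. Consequently, in the definition (\ref{cmm0}) of $\mathcal{L}$ the term $\int_Q \varphi(\cdots)\,dxdt$ and the term $\int_\Omega \varphi(0)[y(0)-y_0]\,dx$ both vanish identically at $(\bar y, \bar u, \bar w)$, and likewise when $\varphi$ is replaced by $\varphi_w$, irrespective of which adjoint state is used. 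Thus both sides of (\ref{cmm2}) collapse to the same reduced form, and I would first record the identities
\begin{align*}
\mathcal{L}(\bar y, \bar u, \varphi, e, \bar w) &= J(\bar y, \bar u, \bar w) + \int_Q e\, g[x, t]\, dxdt, \\
\mathcal{L}(\bar y, \bar u, \varphi_w, e_w, \bar w) &= J(\bar y, \bar u, \bar w) + \int_Q e_w\, g[x, t]\, dxdt.
\end{align*}

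Subtracting, the claim (\ref{cmm2}) becomes equivalent to $\int_Q (e - e_w)\, g[x, t]\, dxdt \ge 0$, and the next step is to treat the two integrals separately. For the unperturbed multiplier $(\varphi, e) \in \Lambda_\infty[(\bar y, \bar u), \bar w]$, the complementary condition (\ref{ComlementCond.00}) gives $e(x, t)\, g[x, t] = 0$ for a.a. $(x, t) \in Q$, hence $\int_Q e\, g[x, t]\, dxdt = 0$. For the perturbed multiplier $(\varphi_w, e_w) \in \Lambda_\infty[(\widehat y_w, \widehat u_w), w]$, I would invoke its sign condition $e_w(x, t) \ge 0$ a.e. (from (\ref{ComlementCond.01})) together with the feasibility inequality $g[x, t] = g(x, t, \bar y, \bar u, \bar w) \le 0$ a.e. (again because $(\bar y, \bar u) \in \Phi(\bar w)$). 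Pointwise multiplication then yields $e_w(x, t)\, g[x, t] \le 0$, so $\int_Q e_w\, g[x, t]\, dxdt \le 0$. Combining these two facts gives $\int_Q (e - e_w)\, g[x, t]\, dxdt = -\int_Q e_w\, g[x, t]\, dxdt \ge 0$, which is exactly (\ref{cmm2}).

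The argument is elementary, and I do not anticipate a substantive obstacle; the only point requiring care is to keep straight that the complementary slackness satisfied by $e_w$ is the one for the \emph{perturbed} problem, namely $e_w(x,t)\, g[x, t, w] = 0$ (with $g$ evaluated at $(\widehat y_w, \widehat u_w, w)$), and that this relation \emph{cannot} be used to annihilate $\int_Q e_w\, g[x, t]\, dxdt$, since there the arguments of $g$ are $(\bar y, \bar u, \bar w)$ rather than $(\widehat y_w, \widehat u_w, w)$. The available tool for the $e_w$-integral is therefore the one-sided estimate coming from the sign of $e_w$ and the feasibility sign of $g[x,t]$, which makes the conclusion a genuine inequality rather than an equality.
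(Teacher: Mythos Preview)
Your proof is correct and matches the paper's own argument essentially line for line: the paper computes the difference $\mathcal{L}(\bar y,\bar u,\varphi,e,\bar w)-\mathcal{L}(\bar y,\bar u,\varphi_w,e_w,\bar w)=-\int_Q e_w\,g[x,t]\,dxdt$ (implicitly using $F(\bar y,\bar u,\bar w)=0$ and $e\,g[x,t]=0$) and then concludes from $e_w\ge 0$, $g[x,t]\le 0$. Your version simply makes the intermediate reductions and the role of complementary slackness for $e$ explicit, and your closing caveat about which complementarity identity applies to $e_w$ is exactly the point.
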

\begin{proof}
From the formula of the Lagrange function in (\ref{cmm0}),  we have 
\begin{align}
    \mathcal L (\bar y, \bar u, \varphi, e, \bar w) - \mathcal L (\bar y, \bar u, \varphi_w, e_w, \bar w) = - \int_Q e_w(x, t)g(x, t, \bar y, \bar u, \bar w) dxdt. 
\end{align}
Combining this with the facts that $e_w(x, t) \ge 0$ and $g(x, t, \bar y, \bar u, \bar w) \le 0$ a.a. $(x, t) \in Q$, we get (\ref{cmm2}). The Lemma is proved. 
\end{proof}

\begin{lemma}\label{cmmm} There exist positive constants $K_1, K_2, K_3 > 0$ such that:
    \begin{align}
        & \|\widetilde y - \bar y \|_{W_2^{1, 1}(0, T; D, H)} \le K_1\|\widehat u_w - \bar u\|_{L^2(Q)}, \label{cmmm.1} \\
        & \|\widetilde y - \widehat y_w\|_{W_2^{1, 1}(0, T; D, H)} \le K_2\|w - \bar w\|_{L^\infty(Q)}, \label{cmmm.2} \\
        &\|\widehat y_w - \bar y\|_{W_2^{1, 1}(0, T; D, H)} \le K_3\Big( \|\widehat u_w - \bar u\|_{L^2(Q)} + \|w - \bar w\|_{L^\infty(Q)}  \Big), \label{cmmm.3}
    \end{align}
    where $\widetilde y \in S_F^{\widehat u_w, \bar w}$.
\end{lemma}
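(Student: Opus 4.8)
The plan is to prove the three estimates successively, exploiting that the states $\bar y = S_F^{\bar u, \bar w}$, $\widetilde y = S_F^{\widehat u_w, \bar w}$ and $\widehat y_w = S_F^{\widehat u_w, w}$ differ pairwise in only one datum: $\widetilde y$ and $\bar y$ carry the same parameter $\bar w$ but different controls, whereas $\widetilde y$ and $\widehat y_w$ carry the same control $\widehat u_w$ but different parameters. Thus \eqref{cmmm.1} is a ``pure control'' estimate, \eqref{cmmm.2} a ``pure parameter'' estimate, and \eqref{cmmm.3} their combination.

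First I would establish \eqref{cmmm.1}. Subtracting the state equations for $\widetilde y$ and $\bar y$ (both with parameter $\bar w$), the parameter terms cancel, so by a Taylor expansion of $y \mapsto f(x, t, y, \bar w)$ and the measurable-selection argument employed around \eqref{3.3}, the difference $\zeta := \widetilde y - \bar y$ solves
$$
\frac{\partial \zeta}{\partial t} + A\zeta + f_y\big(x, t, \bar y + \theta(\widetilde y - \bar y), \bar w\big)\zeta = \widehat u_w - \bar u, \qquad \zeta(0) = 0,
$$
with $\theta$ measurable, $0 \le \theta \le 1$. By \eqref{KeyInq0} the states $\bar y, \widetilde y$ are bounded in $L^\infty(Q)$, hence $(H2)$ places the coefficient $f_y(\cdots)$ in $L^\infty(Q)$. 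Applying the regularity estimate of \cite[Theorem 5, p.360]{Evan} exactly as in the passage yielding \eqref{cmm.000} (an energy estimate to bound $\|\zeta\|_{L^2(0,T;H)}$, then transferring the lower-order term to the right-hand side) gives $\|\zeta\|_{W_2^{1,1}(0,T;D,H)} \le K_1 \|\widehat u_w - \bar u\|_{L^2(Q)}$, which is \eqref{cmmm.1}. The key point is that, the parameter being unchanged, the right-hand side is precisely $\widehat u_w - \bar u$, so its $L^2(Q)$-norm alone drives the bound.

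Next, \eqref{cmmm.2} follows at once from Lemma \ref{Lemma-stateEq.01}: take base point $(\widehat u, \widehat w) = (\widehat u_w, w)$ and perturbed point $(u, w) = (\widehat u_w, \bar w)$, whose solutions are $\widehat y_w$ and $\widetilde y$; since the control difference vanishes,
$$
\|\widetilde y - \widehat y_w\|_{W_2^{1,1}(0,T;D,H)} \le \|\widetilde y - \widehat y_w\|_Y \le C\,\|\bar w - w\|_W = K_2\,\|w - \bar w\|_{L^\infty(Q)}.
$$
One checks that $(\widehat u_w, \bar w)$ lies in the admissible ball using $w \in B_W(\bar w, s_1)$, and that $C$ is uniform in $w$ because $\widehat u_w \in B_U(\bar u, r_1)$ and $w \in B_W(\bar w, s_1)$ keep the relevant $L^\infty$-bounds uniform. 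Finally, \eqref{cmmm.3} is just the triangle inequality $\|\widehat y_w - \bar y\| \le \|\widehat y_w - \widetilde y\| + \|\widetilde y - \bar y\|$ in $W_2^{1,1}(0,T;D,H)$, combined with \eqref{cmmm.1} and \eqref{cmmm.2} and the choice $K_3 := \max\{K_1, K_2\}$.

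The only step requiring genuine work is \eqref{cmmm.1}, and the delicate point there is to produce a $W_2^{1,1}$-bound governed by the $L^2(Q)$-norm of $\widehat u_w - \bar u$ rather than its $L^\infty$-norm; this is precisely why Lemma \ref{Lemma-stateEq.01} cannot be quoted verbatim for \eqref{cmmm.1} and one must instead invoke the $L^2$-regularity theory for the linearized parabolic equation directly. Everything needed to place $f_y(\cdots)$ in $L^\infty(Q)$, together with the uniformity of $K_1, K_2, K_3$ in $w$, is a routine consequence of Lemma \ref{Lemma-stateEq} and $(H2)$ already established in the earlier proofs.
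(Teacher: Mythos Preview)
Your proposal is correct and follows essentially the same route as the paper: for \eqref{cmmm.1} both you and the paper subtract the state equations, linearize via Taylor expansion, and invoke the $L^2$-regularity estimate exactly as in the derivation of \eqref{cmm.000}; for \eqref{cmmm.2} the paper points to the argument giving \eqref{3.9} in Lemma~\ref{Lemma-stateEq.01}, which is precisely the lemma you quote directly; and \eqref{cmmm.3} is the triangle inequality in both. Your additional remarks---why Lemma~\ref{Lemma-stateEq.01} cannot be used verbatim for \eqref{cmmm.1} and why the constant in \eqref{cmmm.2} is uniform in $w$---are accurate and go slightly beyond what the paper spells out.
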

\begin{proof}  We have that $\widetilde y, \bar y$ are solutions to the equation (\ref{P2})-(\ref{P3}) with $(u, w) = (\widehat u_w, \bar w)$, $(u, w) = (\bar u, \bar w)$, respectively. By using similar arguments as in the proof of (\ref{cmm.000}), we obtain (\ref{cmmm.1}). Notice that $F(\widetilde y, \widehat u_w, \bar w) = 0$ and $F(\widehat y_w, \widehat u_w,  w) = 0$. Using similar arguments as in the procedure in the proof of (\ref{3.9}), we obtain (\ref{cmmm.2}). Finally,  (\ref{cmmm.3}) is followed from (\ref{cmmm.1}) and (\ref{cmmm.2}). Therefore, the lemma is proved. 
\end{proof}

\medskip
 
\noindent {\bf Proof of Theorem \ref{DinhLy}}  
  
By Proposition \ref{6.03}, $\bar z =  (\bar y, \bar u)$  is a locally strongly optimal solution of the problem $(P(\bar w))$ in the sense of (\ref{strong.solution}). We next  show that the  estimates (\ref{KQC}) and (\ref{KQC.1})  are  satisfied for some $r_*, s_* > 0$. We do this as follows.

  Let $\widetilde y \in S_F^{\widehat u_w, \bar w}$. By Taylor's expansions of the Lagrange function, there exist measurable functions $\xi_k$, $0 \le \xi_k(x, t) \le 1$, $k = 1, 2$,  such that 
  \begin{align}
      &\mathcal L (\widetilde y, \widehat u_w, \varphi, e, \bar w) \nonumber  \\
        &=\mathcal L (\bar y, \bar u, \varphi, e, \bar w) + D_{(y, u)}\mathcal L (\bar y, \bar u, \varphi, e, \bar w)(\widetilde y - \bar y, \widehat u_w - \bar u) \nonumber \\ 
        &+  
        \frac{1}{2} D^2_{(y, u)}\mathcal L \Big((\bar y, \bar u) + \xi_1[(\widetilde y, \widehat u_w) - (\bar y, \bar u)], \varphi, e, \bar w\Big)[(\widetilde y - \bar y, \widehat u_w - \bar u), (\widetilde y - \bar y, \widehat u_w - \bar u)] \nonumber \\
        &= \mathcal L (\bar y, \bar u, \varphi, e, \bar w) \nonumber  \\ 
        &+  \frac{1}{2} D^2_{(y, u)}\mathcal L \Big((\bar y, \bar u) + \xi_1[(\widetilde y, \widehat u_w) - (\bar y, \bar u)], \varphi, e, \bar w\Big)[(\widetilde y - \bar y, \widehat u_w - \bar u), (\widetilde y - \bar y, \widehat u_w - \bar u)],    \label{7.1} \\
        {\rm and} \nonumber \\
         &\mathcal L (\bar y, \bar u, \varphi_w, e_w, \bar w) \nonumber \\
        &=\mathcal L (\widetilde y, \widehat u_w, \varphi_w, e_w, \bar w) - D_{(y, u)}\mathcal L (\widetilde y, \widehat u_w, \varphi_w, e_w, \bar w)(\widetilde y - \bar y, \widehat u_w - \bar u) \nonumber \\ 
        &+  
        \frac{1}{2} D^2_{(y, u)}\mathcal L \Big((\bar y, \bar u) + \xi_2[(\widetilde y, \widehat u_w) - (\bar y, \bar u)], \varphi_w, e_w, \bar w\Big)[(\widetilde y - \bar y, \widehat u_w - \bar u), (\widetilde y - \bar y, \widehat u_w - \bar u)].  \label{7.2}
  \end{align}
  Let $r'_2$ be positive number chosen according to Corollary \ref{6.02.1} such that 
  \begin{align}
      & \frac{1}{2} D^2_{(y, u)}\mathcal L \Big((\bar y, \bar u) + \xi_1[(\widetilde y, \widehat u_w) - (\bar y, \bar u)], \varphi, e, \bar w\Big)(\widetilde y - \bar y, \widehat u_w - \bar u)^2 \ge \alpha_1 \|\widehat u_w - \bar u\|^2_{L^2(Q)}  \label{7.3} \\
      & {\rm and} \nonumber \\
      & \frac{1}{2} D^2_{(y, u)}\mathcal L \Big((\bar y, \bar u) + \xi_2[(\widetilde y, \widehat u_w) - (\bar y, \bar u)], \varphi_w, e_w, \bar w\Big)(\widetilde y - \bar y, \widehat u_w - \bar u)^2 \ge  \alpha_2 \|\widehat u_w - \bar u\|^2_{L^2(Q)},  \label{7.4}
  \end{align}
  for some positive constants $\alpha_1, \alpha_2 > 0$.
  We put
  \begin{align}
  \label{7.5}
      r_* := {\rm min}(r_1, r'_2) \quad {\rm and} \quad s_* := s_1
  \end{align}
  where $r_1, s_1$ from Proposition \ref{4.03}.     
  
    In what follows: $(\widehat u_w, w) \in B_U(\bar u, r_*) \times B_W(\bar w, s_*)$ with $(r_*, s_*)$ defined by (\ref{7.5}).

    From (\ref{7.1}), (\ref{7.3}) and (\ref{cmm2}), we deduce that 
    \begin{align}
    \label{7.6}
    \mathcal L (\widetilde y, \widehat u_w, \varphi, e, \bar w) \ge \mathcal L (\bar y, \bar u, \varphi_w, e_w, \bar w).
\end{align}
Combining (\ref{7.6}), (\ref{7.2}) with (\ref{7.4}), we obtain 
    \begin{align}
        \label{7.7}
        \mathcal L (\widetilde y, \widehat u_w, \varphi, e, \bar w)  &\ge  \mathcal L (\widetilde y, \widehat u_w, \varphi_w, e_w, \bar w) \nonumber \\
        &- D_{(y, u)}\mathcal L (\widetilde y, \widehat u_w, \varphi_w, e_w, \bar w)(\widetilde y - \bar y, \widehat u_w - \bar u)   +  \alpha_2 \|\widehat u_w - \bar u\|^2_{L^2(Q)}.
    \end{align}
    It follows that 
    \begin{align}
    \label{cmm10}
        \alpha_2 \|\widehat u_w - \bar u\|^2_{L^2(Q)} &\le \Big[\mathcal L (\widetilde y, \widehat u_w, \varphi, e, \bar w) - \mathcal L (\widetilde y, \widehat u_w, \varphi_w, e_w, \bar w) \Big]  \nonumber \\
        &+ D_{(y, u)}\mathcal L (\widetilde y, \widehat u_w, \varphi_w, e_w, \bar w)(\widetilde y - \bar y, \widehat u_w - \bar u)   
        =: A_1 + A_2.        
    \end{align}
    Since  $F(\widetilde y, \widehat u_w, \bar w) = 0$, we have 
    \begin{align}
    \label{cmm11}
        A_1  = \mathcal L (\widetilde y, \widehat u_w, \varphi, e, \bar w) - \mathcal L (\widetilde y, \widehat u_w, \varphi_w, e_w, \bar w)  
        = \int_Q (e - e_w)g(x, t, \widetilde y, \widehat u_w, \bar w) dxdt.
    \end{align}
    Also, 
    \begin{align*}
        (e - e_w)g(x, t, \widetilde y, \widehat u_w, \bar w) &- (e - e_w)[g(x, t, \widetilde y, \widehat u_w, \bar w) - g(x, t, \widehat y_w, \widehat u_w, w)] \\
        &= e(x, t).g(x, t, \widehat y_w, \widehat u_w, w) \le 0, 
    \end{align*}
    which implies that
    \begin{align}
        \label{cmm12}
        (e - e_w)g(x, t, \widetilde y, \widehat u_w, \bar w) \le (e - e_w)[g(x, t, \widetilde y, \widehat u_w, \bar w) - g(x, t, \widehat y_w, \widehat u_w, w)]. 
    \end{align}
    From this and (\ref{cmm11}), we obtain 
    \begin{align}
    \label{cmm13}
        A_1  &\le \int_Q (e - e_w)[g(x, t, \widetilde y, \widehat u_w, \bar w) - g(x, t, \widehat y_w, \widehat u_w, w)] dxdt \nonumber \\
        &\le \Big|\int_Q (e - e_w)[g(x, t, \widetilde y, \widehat u_w, \bar w) - g(x, t, \widehat y_w, \widehat u_w, w)] dxdt \Big| \nonumber \\
        &\le \int_Q |e - e_w||g(x, t, \widetilde y, \widehat u_w, \bar w) - g(x, t, \widehat y_w, \widehat u_w, w)| dxdt.
    \end{align}
    Since $\widehat u_w \in B_U(\bar u, r_*)$ and  $w \in B_W(\bar w, s_*)$,
    \begin{align*}
        &|\widehat u_w(x, t)| \le \|\widehat u_w\|_{L^\infty(Q)} = \|(\widehat u_w - \bar u) + \bar u\|_{L^\infty(Q)} \le  \|\bar u\|_{L^\infty(Q)} +  \|\widehat u_w - \bar u\|_{L^\infty(Q)}  \le \|\bar u\|_{L^\infty(Q)} + r_*, \\
        &|w(x, t)| \le \|w\|_{L^\infty(Q)} = \|(w - \bar w) + \bar w\|_{L^\infty(Q)} \le \| \bar w\|_{L^\infty(Q)} + \|w - \bar w\|_{L^\infty(Q)}  \le \|\bar w\|_{L^\infty(Q)} + s_*
    \end{align*}
    for a.a. $(x, t) \in Q$.   Since  $F(\widetilde y, \widehat u_w, \bar w) = F(\widehat y_w, \widehat u_w,  w) = 0$, by $(i)$ of Lemma \ref{Lemma-stateEq}, there exist constants $c_{01}, c_{02} > 0$ such that 
    \begin{align*}
        &|\widetilde y(x, t)| \le \|\widetilde y\|_{L^\infty(Q)} \le c_{01}(\|\widehat u_w\|_{L^\infty(Q)} + \|\bar w\|_{L^\infty(Q)} + \|y_0\|_{L^\infty(\Omega)}) \\ 
        &\quad \quad \quad  \quad \le c_{01}(\|\bar u\|_{L^\infty(Q)} + r_* + \|\bar w\|_{L^\infty(Q)} + \|y_0\|_{L^\infty(\Omega)}), \\
        &|\widehat y(x, t)| \le \|\widehat y\|_{L^\infty(Q)} \le c_{02}(\|\widehat u_w\|_{L^\infty(Q)} + \| w\|_{L^\infty(Q)} + \|y_0\|_{L^\infty(\Omega)}) \\ 
        &\quad \quad \quad  \quad \le c_{02}(\|\bar u\|_{L^\infty(Q)} + r_* + \|\bar w\|_{L^\infty(Q)} + s_* + \|y_0\|_{L^\infty(\Omega)}), 
    \end{align*}
    for a.a. $(x, t) \in Q$.    By assumption  $(H3)$, we have 
    \begin{align}
    \label{cmm14}
        \Big |g(x, t, \widetilde y, \widehat u_w, \bar w) - g(x, t, \widehat y_w, \widehat u_w, w) \Big| \le k_{g, M} \Big(|\widetilde y(x, t) - \widehat y_w(x, t)| + |w(x, t) - \bar w(x, t)| \Big)
    \end{align}
    for a.a. $(x, t) \in Q$.   Combining (\ref{cmm13}) with (\ref{cmm14}), we get 
     \begin{align}
    \label{cmm15}
        A_1 &\le k_{g, M} \int_Q |e - e_w||\widetilde y(x, t) - \widehat y_w(x, t)| dxdt +  k_{g, M}  \int_Q |e - e_w| |w(x, t) - \bar w(x, t)|dxdt  \nonumber \\
        &\le k_{g, M} \|e - e_w\|_{L^2(Q)}  \Big(\|\widetilde y - \widehat y_w\|_{L^2(Q)}  + \|w - \bar w\|_{L^2(Q)} \Big).
    \end{align}
    From Lemma \ref{5.0}, there exists a constant $a_1 > 0$ such that 
    \begin{align}
    \label{cmm16}
        \|e - e_w\|_{L^2(Q)} \le a_1 \Big( \|\widehat u_w - \bar u\|_{L^2(Q)} + \|w - \bar w\|_{L^\infty(Q)} \Big).
    \end{align}
    From $(ii)$ of  Lemma \ref{cmmm}, there exists a constant $a_2 > 0$ such that 
    \begin{align}
    \label{cmm17}
       \|\widetilde y - \widehat y_w\|_{L^2(Q)}  \le a_2 \|w - \bar w\|_{L^\infty(Q)}.
    \end{align}
    It follows from (\ref{cmm15}), (\ref{cmm16}) and (\ref{cmm17}), there exists a positive constant $a_3 := a_1(a_2 + 1)k_{g, M} > 0$ such that 
    \begin{align}
        \label{cmm18}
        A_1 \le  a_3 \|w - \bar w\|_{L^\infty(Q)} \Big( \|\widehat u_w - \bar u\|_{L^2(Q)} + \|w - \bar w\|_{L^\infty(Q)} \Big).
    \end{align}

    To estimate $A_2$, we do as follows.  Since $(\varphi_w, e_w) \in \Lambda_\infty[(\widehat y_w, \widehat u_w), w]$,  $D_{(y, u)}\mathcal{L}(\widehat y_w, \widehat u_w, \varphi_w, e_w, w) = 0$. Hence, with $h := \widetilde y - \bar y$ and $v := \widehat u_w - \bar u$,  we have 
    
    \begin{align}\label{cmm19}
A_2  &\le   \Big|D_{(y, u)}\mathcal L (\widetilde y, \widehat u_w, \varphi_w, e_w, \bar w)(h, v) 
 -   D_{(y, u)}\mathcal{L}(\widehat y_w, \widehat u_w, \varphi_w, e_w, w)(h, v) \Big| \nonumber \\
 &\le   \Big|D_{(y, u)}\mathcal L (\widetilde y, \widehat u_w, \varphi_w, e_w, \bar w)(h, v) 
 -   D_{(y, u)}\mathcal L (\widehat y_w, \widehat u_w, \varphi_w, e_w, \bar w)(h, v) \Big| \nonumber \\
 &+ \Big|D_{(y, u)}\mathcal L (\widehat y_w, \widehat u_w, \varphi_w, e_w, \bar w)(h, v) 
 - D_{(y, u)}\mathcal{L}(\widehat y_w, \widehat u_w, \varphi_w, e_w, w)(h, v) \Big|  =: T_1 + T_2. 
    \end{align}
We have 
\begin{align*}
      &D_{(y, u)}\mathcal L (\widetilde y, \widehat u_w, \varphi_w, e_w, \bar w)(h, v) =D_{(y, u)} J (\widetilde y, \widehat u_w, \bar w)(h, v) \\
    &+  \int_Q \varphi_w \Big(\frac{\partial h}{\partial t}  + Ah +  f_y(x, t, \widetilde y, \bar w)h - v  \Big)dxdt  +  \int_Q e_wD_{(y, u)}g(\cdot,\cdot, \widetilde y, \widehat u_w, \bar w)(h, v)dxdt, \\
     &D_{(y, u)}\mathcal L (\widehat y_w, \widehat u_w, \varphi_w, e_w, \bar w)(h, v) =D_{(y, u)} J (\widehat y_w, \widehat u_w, \bar w)(h, v) \\
    &+  \int_Q \varphi_w \Big(\frac{\partial h}{\partial t}  + Ah +  f_y(x, t, \widehat y_w, \bar w)h - v  \Big)dxdt  +  \int_Q e_wD_{(y, u)}g(\cdot,\cdot, \widehat y_w, \widehat u_w, \bar w)(h, v)dxdt.
\end{align*}
Hence
\begin{align}
\label{cmm20.1}
    T_1  &\le  \Big|D_{(y, u)} J (\widetilde y, \widehat u_w, \bar w)(h, v) - D_{(y, u)} J (\widehat y_w, \widehat u_w, \bar w)(h, v) \Big| \nonumber \\
    &+  \int_Q |\varphi_w| |f_y(x, t, \widetilde y, \bar w) - f_y(x, t, \widehat y_w, \bar w)|.|h| dxdt \nonumber \\
    &+  \int_Q |e_w| |D_{(y, u)}g(\cdot,\cdot, \widetilde y, \widehat u_w, \bar w)(h, v) - D_{(y, u)}g(\cdot,\cdot, \widehat y_w, \widehat u_w, \bar w)(h, v)| dxdt.
\end{align}
According to Lemma \ref{5.0}, there exists a constant $c_{03} > 0$ such that  
\begin{align*}
     \|\varphi_w - \varphi\|_{L^\infty(Q)} + \|e_w - e\|_{L^\infty(Q)} \le c_{03}(\|\widehat u_w - \bar u\|_{L^\infty(Q)} +  \|w - \bar w\|_{L^\infty(Q)}) \le c_{03}(r_* + s_*). 
\end{align*}
Hence 
\begin{align*}
    &\|\varphi_w\|_{L^\infty(Q)} \le \|\varphi\|_{L^\infty(Q)} + \|\varphi_w - \varphi\|_{L^\infty(Q)} \le  \|\varphi\|_{L^\infty(Q)} +  c_{03}(r_* + s_*), \\
    &\|e_w \|_{L^\infty(Q)} \le  \| e\|_{L^\infty(Q)} + \|e_w - e\|_{L^\infty(Q)} \le \| e\|_{L^\infty(Q)} + c_{03}(r_* + s_*).
\end{align*}
Put 
\begin{align}
    M_0 := {\rm max} \bigg\{\|\varphi\|_{L^\infty(Q)}, \| e\|_{L^\infty(Q)}  \bigg\} + c_{03}(r_* + s_*).
\end{align}
Then, from (\ref{cmm20.1}), we have 
\begin{align}
\label{cmm20}
    T_1  &\le  \Big|D_{(y, u)} J (\widetilde y, \widehat u_w, \bar w)(h, v) - D_{(y, u)} J (\widehat y_w, \widehat u_w, \bar w)(h, v) \Big| \nonumber \\
    &+ M_0 \int_Q |f_y(x, t, \widetilde y, \bar w) - f_y(x, t, \widehat y_w, \bar w)|.|h| dxdt \nonumber \\
    &+ M_0 \int_Q |D_{(y, u)}g(\cdot,\cdot, \widetilde y, \widehat u_w, \bar w)(h, v) - D_{(y, u)}g(\cdot,\cdot, \widehat y_w, \widehat u_w, \bar w)(h, v)| dxdt \nonumber \\
    &=: B_1 + M_0B_2 + M_0B_3.
\end{align}
By Lemma \ref{cmmm} [estimate (\ref{cmmm.1}) and estimate (\ref{cmmm.2})], we have 
\begin{align*}
    \|h\|_{L^2(Q)} \le K_1\|v\|_{L^2(Q)} = K_1\|\widehat u_w - \bar u\|_{L^2(Q)} \quad {\rm and} \quad \|\widetilde y - \widehat y_w\|_{L^2(Q)} \le K_2\|w - \bar w\|_{L^\infty(Q)}. 
\end{align*}
Combining these facts with the assumption $(H3)$, we have 
\begin{align}
    \label{cmm21}
    B_1  &\le  \int_Q \Big|L_y(x, t, \widetilde y, \widehat u_w, \bar w) - L_y (x, t, \widehat y_w, \widehat u_w, \bar w) \Big|. |h|dxdt \nonumber \\
    &+     \int_Q \Big|L_u(x, t, \widetilde y, \widehat u_w, \bar w) - L_u (x, t, \widehat y_w, \widehat u_w, \bar w) \Big|. |v|dxdt \nonumber\\
    &\le k_{L, M} \int_Q | \widetilde y -  \widehat y_w |. |h|dxdt  + k_{L, M} \int_Q | \widetilde y -  \widehat y_w |. |v|dxdt \nonumber \\
    &\le k_{L, M} \|\widetilde y - \widehat y_w\|_{L^2(Q)} \Big(\|h\|_{L^2(Q)} +  \|v\|_{L^2(Q)} \Big) \nonumber \\
    &\le k_{L, M} K_2(1 + K_1)\|w - \bar w\|_{L^\infty(Q)}\|\widehat u_w - \bar u\|_{L^2(Q)} =: a_4 \|w - \bar w\|_{L^\infty(Q)}\|\widehat u_w - \bar u\|_{L^2(Q)}
\end{align}
and 
\begin{align}\label{cmm22}
    B_2 &\le k_{f, M} \int_Q | \widetilde y -  \widehat y_w |. |h|dxdt \le k_{f, M} \|\widetilde y - \widehat y_w\|_{L^2(Q)} \|h\|_{L^2(Q)} \nonumber \\ 
    &\le k_{f, M}K_1K_2\|w - \bar w\|_{L^\infty(Q)}\|\widehat u_w - \bar u\|_{L^2(Q)} 
    =: a_5 \|w - \bar w\|_{L^\infty(Q)}\|\widehat u_w - \bar u\|_{L^2(Q)},
\end{align} and 
\begin{align}\label{cmm23}
    B_3  &\le  \int_Q \Big|g_y(x, t, \widetilde y, \widehat u_w, \bar w) - g_y (x, t, \widehat y_w, \widehat u_w, \bar w) \Big|. |h|dxdt \nonumber \\
    &+     \int_Q \Big|g_u(x, t, \widetilde y, \widehat u_w, \bar w) - g_u (x, t, \widehat y_w, \widehat u_w, \bar w) \Big|. |v|dxdt \nonumber\\
    &\le k_{g, M}  \int_Q | \widetilde y -  \widehat y_w |. |h|dxdt  + k_{g, M}   \int_Q | \widetilde y -  \widehat y_w |. |v|dxdt \nonumber \\
    &\le  k_{g, M}  \|\widetilde y - \widehat y_w\|_{L^2(Q)} \Big(\|h\|_{L^2(Q)} +  \|v\|_{L^2(Q)} \Big) \nonumber \\
    &\le k_{g, M}  K_2(1 + K_1)\|w - \bar w\|_{L^\infty(Q)}\|\widehat u_w - \bar u\|_{L^2(Q)} =: a_6 \|w - \bar w\|_{L^\infty(Q)}\|\widehat u_w - \bar u\|_{L^2(Q)}.
\end{align}
From (\ref{cmm20}),  (\ref{cmm21}), (\ref{cmm22}) and (\ref{cmm23}), we obtain
\begin{align}
    \label{cmm24}
    T_1 &\le (a_4 + M_0a_5 + M_0a_6)\|w - \bar w\|_{L^\infty(Q)}\|\widehat u_w - \bar u\|_{L^2(Q)} \nonumber \\
    &=: a_7\|w - \bar w\|_{L^\infty(Q)}\|\widehat u_w - \bar u\|_{L^2(Q)}.
\end{align}
For estimating on $T_2$, by similar arguments as above, we also obtain 
\begin{align}
    \label{cmm25}
    T_2  \le  a_8\|w - \bar w\|_{L^\infty(Q)}\|\widehat u_w - \bar u\|_{L^2(Q)}
\end{align}
for some constants $a_8 > 0$. Combining (\ref{cmm19}), (\ref{cmm24}) and (\ref{cmm25}), we get
\begin{align}
    \label{cmm26}
    A_2 \le (a_7 + a_8)\|w - \bar w\|_{L^\infty(Q)}\|\widehat u_w - \bar u\|_{L^2(Q)} =:  a_9\|w - \bar w\|_{L^\infty(Q)}\|\widehat u_w - \bar u\|_{L^2(Q)}.
\end{align}
From (\ref{cmm10}), (\ref{cmm18}) and (\ref{cmm26})
\begin{align}
    \label{cmm27}
       \alpha_2 \|\widehat u_w - \bar u\|^2_{L^2(Q)}  &\le a_3 \|w - \bar w\|_{L^\infty(Q)} \Big( \|\widehat u_w - \bar u\|_{L^2(Q)} + \|w - \bar w\|_{L^\infty(Q)} \Big) \nonumber \\
       &+ a_9\|w - \bar w\|_{L^\infty(Q)}\|\widehat u_w - \bar u\|_{L^2(Q)}.
\end{align}
Putting
\begin{align*}
    X := \|\widehat u_w - \bar u\|_{L^2(Q)},  \quad  a :=   \alpha_2 > 0,  \quad b := (a_3 + a_9)\|w - \bar w\|_{L^\infty(Q)},  \quad c := - a_3 \|w - \bar w\|^2_{L^\infty(Q)}. 
\end{align*}
Then (\ref{cmm27}) becomes 
\begin{align}
    \label{cmm28.0}
    aX^2 - bX + c \le 0. 
\end{align}
It follows that 
\begin{align}
    \label{cmm28}
    X \le X_2 := \frac{b + \sqrt \Delta }{2a}, \quad {\rm where} \quad \Delta := b^2 - 4ac. 
\end{align}
In fact that  $\Delta = b^2 - 4ac = [(a_3 + a_9)^2 + 4a_3  \alpha_2]\|w - \bar w\|^2_{L^\infty(Q)}.$  Hence
\begin{align}
\label{cmm29}
    \|\widehat u_w - \bar u\|_{L^2(Q)} =  X \le X_2 = \widehat K_{\rm Lips}.\|w - \bar w\|_{L^\infty(Q)}, 
\end{align}
where
\begin{align*}
    \widehat K_{\rm Lips} := \frac{a_3 + a_9 + \sqrt{(a_3 + a_9)^2 + 4a_3  \alpha_2}}{2 \alpha_2}.
\end{align*}
From (\ref{cmmm.3}) and (\ref{cmm29}), we obtain (\ref{KQC}). From (\ref{5.00.1}), (\ref{5.10}) and (\ref{cmm29}), we obtain (\ref{KQC.1}). Therefore,  Theorem \ref{DinhLy} is proved. $\hfill\square$

\medskip

\noindent {\bf Acknowledgments}   
The research  was funded by the  International Centre for Research and Postgraduate Training in Mathematics, Institute of Mathematics, VAST, under grant number ICRTM02-2022.01. 

\medskip

\end{document}